\theoremstyle{plain}
\newtheorem{theorem}{Theorem}[section]
\newtheorem{lemma}[theorem]{Lemma}
\newtheorem{proposition}[theorem]{Proposition}
\theoremstyle{definition}
\newtheorem{definition}[theorem]{Definition}
\theoremstyle{remark}
\newtheorem{remark}[theorem]{Remark}
\begin{document}
	\title[Upper ideal relation graphs associated to rings]{Upper ideal relation graphs associated to rings}
	\author[Barkha Baloda, Jitender Kumar]{Barkha Baloda, $\text{Jitender Kumar}^{^*}$}
	\address{Department of Mathematics, Birla Institute of Technology and Science Pilani, Pilani, India}
	\email{barkha0026@gmail.com,jitenderarora09@gmail.com}

\begin{abstract}
Let $R$ be a ring with unity. The upper ideal relation graph $\Gamma_U(R)$ of the ring $R$ is a simple undirected graph whose vertex set is the set of all non-unit elements of $R$ and two distinct vertices $x, y$ are adjacent if and only if there exists a non-unit element $z \in R$ such that the ideals $(x)$ and $(y)$ contained in the ideal $(z)$. In this article, we classify all the non-local finite commutative rings whose upper ideal relation graphs are split graphs, threshold graphs and cographs, respectively. In order to study topological properties of $\Gamma_U(R)$, we determine all the non-local finite commutative rings $R$ whose upper ideal relation graph has genus at most $2$. Further, we precisely characterize all the non-local finite commutative rings for which the crosscap of $\Gamma_U(R)$ is either $1$ or $2$.
\end{abstract}

\subjclass[2020]{05C25}
\keywords{Non-local commutative rings, ideals, genus, crosscap, graph classes.\\ *  Corresponding author}
\maketitle
\section{Introduction}
The study of algebraic structures through graphs has emerged as a fascinating research discipline over the past three decades. It has not only provided exciting results but also opened up a whole new domain yet to be explored.  
Topological graph theory is principally related to the embedding of a graph into a surface without edge crossing. Graphs on surfaces form a natural connection between discrete and continuous mathematics. They enable us to understand both graphs and surfaces better. Its applications lie in printing electronic circuits, where the aim is to embed a circuit that is the graph on a circuit board (the surface) without two connections crossing each other and thereby resulting in a short circuit. If a graph $\Gamma$ can be drawn on a plane such that its edges intersects only at their end points, then the graph $\Gamma$ is called planar. All the graphs are not planar but they can be embedded on topological surfaces like $g$-hole torus, projective plane and Klein bottle etc. A graph is said to be \emph{embeddable} on a topological surface if it can be drawn on a surface without edge crossing. The \emph{genus}  of a graph $\Gamma$ is the minimum integer $g$ such that $\Gamma$ can be embedded in an orientable surface with $g$ handles. The graphs having genus 0 and genus $1$ are called \emph{planar} graphs and \emph{toroidal} graphs, respectively. Let $\mathbb{N}_k$ be the non-orientable surface formed by the connected sum of $k$ projective planes. The \emph{crosscap} of a graph $\Gamma$ is the minimum non-negative integer $k$ such that $\Gamma$ can be embedded in $\mathbb{N}_k$. A graph is called \emph{projective planar}, if it's crosscap is $1$. In topological graph theory determining the genus and crosscap of a graph is a basic but a very complicated problem, it is indeed NP-complete.

 The topological invariants like planarity, genus and crosscap are extensively studied for graphs associated to algebraic structures.  All the finite dimensional vector spaces have been determined in \cite{kalaimurugan2021genus,tamizh2020genus}, whose non-zero component graphs and non-zero component union graphs, respectively, are of genus at most two. The genus or crosscap of various graphs associated with groups including commuting graphs \cite{afkhami2015planar}, reduced power graphs \cite{anitha2020characterization}, nilpotent graphs \cite{das2015genus} and non-cyclic graphs \cite{ma2020finite}, have been investigated in the literature and all the finite groups have been ascertained with corresponding graphs having genus either 0 or 1 or 2.  The association of graphs to rings was introduced by Beck \cite{beck1988coloring} and he was mainly interested in the coloring of a  graph associated to a commutative ring. Then Anderson and Livingston \cite{anderson1999zero} studied a subgraph of the graph introduced by Beck and named as the zero divisor graph.  Further, various other graphs associated to rings, namely: Cayley graphs, total graphs, zero divisor graphs, annihilating-ideal graphs, generalized co-maximal graphs and co-maximal graphs, have been introduced and studied from different perspectives. The problem of investigating the genus and crosscap of graphs associated to rings has been considered by many researchers. For instance, Belshoff and Chapman \cite{belshoff2007planar} classified all finite local rings whose zero divisor graph is planar. Wang \cite{wang2006zero} determined all the rings of the form $\mathbb{Z}_{p_1^{m_1}} \times \mathbb{Z}_{p_2^{m_2}} \times \cdots \times \mathbb{Z}_{p_k^{m_k}}$ or $\frac{\mathbb{Z}_n[x]}{(x^m)}$ whose zero divisor graphs have genus at most one. All the non-local commutative rings whose zero-divisor graphs having genus either $1$ or $2$, have been classified in \cite{asir2020classification, wickham2008classification}. 
 Jesili \emph{et al.} \cite{jesili2023genus} characterized all the commutative non-local rings whose reduced cozero-divisor graph has genus at most one. Biswas \emph{et al.} \cite{biswas2022subgraph} provided all the finite commutative rings whose generalised co-maximal graph has genus at most two.
All commutative rings whose total graph has genus at most two have been characterized in \cite{maimani2012rings,chelvam2013genus}. All commutative rings whose total graph has crosscap either $1$ or $2$ have been obtained in \cite{asir2019classification, khashyarmanesh2013projective}.


Since the ideals play a vital role in the development of ring structures, it is interesting to study graphs associated with the ideals of a ring. In this connection, the graphs associated with ideals including inclusion ideal graphs \cite{akbari2015inclusion}, intersection ideal graphs \cite{chakrabarty2009intersection},  co-maximal ideal graphs \cite{ye2012co} etc., of rings have been studied in the literature. Asir \emph{et al.} \cite{asir2018classification} determined all isomorphic classes of commutative rings whose ideal-based total graph has genus $1$ or $2$. Planar and toroidal graphs that are intersection ideal graphs of Artinian commutative rings are classified in \cite{pucanovic2014toroidality}. Pucanovi\'{c} \emph{et al.} \cite{pucanovic2014genus} characterized all the graph classes of genus $2$ that are intersection graphs of ideals of some commutative rings. The intersection ideal graphs with crosscap at most two of all the Artinian commutative rings have been investigated by Ramanathan \cite{ramanathan2021projective}.
To reveal the relationships between the ideals and the elements of the ring $R$, Ma \emph{et al.} \cite{ma2016automorphism} introduced the ideal-relation graph of $R$, denoted by $\overrightarrow{\Gamma_{i}}(R)$, is a directed graph whose vertex set is $R$ and there is an edge from a vertex $x$ to a distinct vertex $y$ if and only if $(x) \subset (y)$. Analogously, the undirected ideal-relation graph of the ring $R$ is the simple graph whose vertex set is $R$ and two distinct vertices $x, y$ are adjacent if and only if either $(x) \subset (y)$ or $(y) \subset (x)$, that is the principal ideals $(x)$ and $(y)$ are comparable in the poset of principal ideals of $R$. Thus, it is natural to consider a new graph associated to the ring $R$ such that its vertices $x$ and $y$ are adjacent if and only if  $(x)$ and $(y)$ have an upper bound in the poset of the principal ideals of $R$. Consequently, the \emph{upper ideal relation} graph of the ring $R$ has been introduced by Baloda \emph{et al.} \cite{baloda2023study}. The upper ideal relation graph $\Gamma_U(R)$ of the ring $R$ is a simple undirected graph whose vertex set is the set of all non-unit elements of $R$ and two distinct vertices $x, y$ are adjacent if and only if there exists a non-unit element $z \in R$ such that the ideals $(x)$ and $(y)$ contained in the ideal $(z)$. Various graph-theoretic properties including completeness, planarity and the minimum degree of $\Gamma_U(R)$ have been investigated in \cite{baloda2023study}.  Moreover, the vertex connectivity, automorphism group, Laplacian and the normalized Laplacian spectrum of the upper ideal relation graph $\Gamma_U(\mathbb{Z}_n)$ of the ring  of integers modulo $n$ have been determined in \cite{baloda2023study}.

The aim of this paper is to investigate embeddings of upper ideal relation graphs $\Gamma_U(R)$ on certain surfaces. We classify all the non-local commutative rings $R$ such that $\Gamma_U(R)$ is of genus (or crosscap) at most two. Moreover, we study some important graph classes including split graphs, threshold graphs and cographs, of $\Gamma_U(R)$. These graph classes can be defined either structurally or in terms of forbidden induced subgraphs. This paper is arranged as follows. In Section 2, we recall some definitions and necessary results. In Section $3$, we classify all the non-local commutative rings whose upper ideal relation graphs are split graphs, threshold graphs and cographs. Section 4 classifies all the non-local commutative rings $R$ for which $\Gamma_U(R)$ has genus at most $2$. Also, we determine precisely all the non-local commutative rings for which $\Gamma_U(R)$ has crosscap either $1$ or $2$. The manuscript is concluded with some future research problems in Section $5$.


\section{preliminaries}

In this section, first we recall the graph theoretic notions from  \cite{westgraph, white1985graphs}. 
A \emph{graph} $\Gamma$ is a pair  $\Gamma = (V, E)$, where $V(\Gamma)$ and $E(\Gamma)$ are the set of vertices and edges of $\Gamma$, respectively. Two distinct vertices $u_1, u_2$ are $\mathit{adjacent}$, denoted by $u_1 \sim u_2$, if there is an edge between $u_1$ and $u_2$. Otherwise, we write it as $u_1 \nsim u_2$. Let $\Gamma$ be a graph. A \emph{subgraph}  $\Gamma'$ of $\Gamma$ is the graph such that $V(\Gamma') \subseteq V(\Gamma)$ and $E(\Gamma') \subseteq E(\Gamma)$. If $X \subseteq V(\Gamma)$ then the subgraph of $\Gamma$ induced by $X$, denoted by $\Gamma(X)$, is the graph with vertex set $X$ and two vertices of $\Gamma(X)$ are adjacent if and only if they are adjacent in $\Gamma$. A graph $\Gamma$ is said to be $complete$ if every two distinct vertices are adjacent. The complete graph on $n$ vertices is denoted by $K_n$. A graph $\Gamma$ is said to be a \emph{complete bipartite graph} if the vertex $V(\Gamma)$ can be partitioned into two disjoint union of nonempty sets $A$ and $B$, such that two distinct vertices are adjacent if and only if they belong to different sets. Moreover, if the cardinality $|A|$ of $A$ is $m$ and $|B| = n$ then we denote it by $K_{m,n}$. A \emph{path} in a graph is a sequence of distinct vertices with the property that each vertex in the sequence is adjacent to the next vertex of it. A path graph on $n$ vertices is denoted by $P_n$. A \emph{cycle} is a path that begins and ends on the same vertex. If a cycle is of length $n$ then we write it as $C_n$. A subset $X$ of $V(\Gamma)$ is said to be \emph{independent} if no two vertices of $X$ are adjacent. A graph $\Gamma$ is a \emph{split} graph if the vertex set is the disjoint union of  two sets $A$ and $B$, where $A$ induces a complete subgraph and $B$ is an independent set.
\begin{lemma}\label{splitgraph}\cite{MR0505860}
A graph $\Gamma$ is a split graph if and only if it does not have an induced subgraph isomorphic to one of the three forbidden graphs, $C_4, C_5$ or $2K_2$.
\end{lemma}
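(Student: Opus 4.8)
The plan is to prove the two implications separately, the forward one being routine and the converse requiring an extremal choice of clique. For the forward implication I would use that the class of split graphs is closed under taking induced subgraphs: if $V(\Gamma)=A\sqcup B$ with $A$ a clique and $B$ an independent set, then for any $X\subseteq V(\Gamma)$ the partition $(A\cap X,\;B\cap X)$ witnesses that $\Gamma(X)$ is split. Hence it suffices to verify that none of $C_4$, $C_5$ or $2K_2$ (two disjoint edges) is itself a split graph. For $C_5$ this is immediate by counting, since every clique and every independent set in $C_5$ has at most two vertices, so a clique together with an independent set covers at most $4<5$ vertices. For $C_4$ and for $2K_2$ the largest cliques are single edges, whose two remaining vertices again form an edge, and since any three vertices already span an edge no partition into a clique and an independent set can exist. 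This shows that a split graph contains none of the three graphs as an induced subgraph.

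For the converse I would assume $\Gamma$ has no induced $C_4$, $C_5$ or $2K_2$, and choose a clique $K$ of maximum size, and among all maximum cliques one for which the induced subgraph $\Gamma(V\setminus K)$ has the fewest edges; set $I=V\setminus K$. The goal is to show that $I$ is independent, which yields the split partition $(K,I)$. Suppose instead that $I$ contains an edge $u\sim v$. First $|K|\ge 2$, for otherwise $\{u,v\}$ would be a larger clique. I would then classify the vertices of $K$ by their adjacencies to $u$ and $v$, writing $K=A\sqcup B\sqcup C\sqcup D$, where $A$ (resp.\ $B$) consists of the vertices adjacent to exactly $v$ (resp.\ exactly $u$), $C$ of those adjacent to both, and $D$ of those adjacent to neither. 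Maximality of $K$ forces $u$ and $v$ each to miss a vertex of $K$, so $A\cup D\neq\varnothing$ and $B\cup D\neq\varnothing$.

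The key structural facts then come from the three excluded subgraphs. If $D$ contained two distinct vertices, these together with the edge $u\sim v$ would induce a $2K_2$; hence $|D|\le 1$. If both $A$ and $B$ were nonempty, then picking $a\in A$ and $b\in B$ (so $a\sim b$ since both lie in $K$) the four vertices would induce the $4$-cycle $u\sim v\sim a\sim b\sim u$ with no chords, a forbidden $C_4$; hence $A=\varnothing$ or $B=\varnothing$, and by the symmetry between $u$ and $v$ we may assume $B=\varnothing$. Then $B\cup D\neq\varnothing$ forces $D=\{d\}$ for a single vertex $d$ adjacent to neither $u$ nor $v$, while $v$ is adjacent to every vertex of $K\setminus\{d\}=A\cup C$. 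Consequently $K'=(K\setminus\{d\})\cup\{v\}$ is again a clique of maximum size.

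The main obstacle is the final contradiction with the minimality of $K$. Passing from $K$ to $K'$ moves $v$ out of the independent side and $d$ into it, so comparing edge counts reduces to showing that $d$ has strictly fewer neighbours in $I\setminus\{v\}$ than $v$ has in $I$. This is precisely the step that draws on $C_5$-freeness, together with a more careful analysis of how $d$, $u$, $v$ and the remaining clique vertices interact, and it is the delicate heart of the argument; once it is in place, $\Gamma(V\setminus K')$ has fewer edges than $\Gamma(V\setminus K)$, contradicting the extremal choice of $K$. Therefore $I$ must be independent, and $(K,I)$ exhibits $\Gamma$ as a split graph.
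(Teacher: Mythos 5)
This lemma is the Foldes--Hammer characterization, which the paper quotes with a citation and does not prove, so there is no in-paper argument to compare against; judged on its own terms, your outline follows the standard extremal proof of that theorem, and most of it is carried out correctly: the forward direction (hereditariness of splitness plus the three easy non-split checks), the bound $|D|\le 1$ via $2K_2$, the exclusion of $A\neq\varnothing\neq B$ via $C_4$, the deduction $D=\{d\}$, and the observation that $K'=(K\setminus\{d\})\cup\{v\}$ is again a maximum clique are all sound.

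However, there is a genuine gap exactly where you flag ``the delicate heart of the argument'': you never prove that $d$ has strictly fewer neighbours in $I\setminus\{v\}$ than $v$ does, you only assert that it ``draws on $C_5$-freeness.'' Since everything before this point is routine bookkeeping, this unproved inequality \emph{is} the theorem, and a proof that stops here has not used the $C_5$ hypothesis at all. The step does close, but it needs two further ideas you do not supply. First, $A\neq\varnothing$: if $A=\varnothing$ then $K\setminus\{d\}=C$ is adjacent to both $u$ and $v$, so $C\cup\{u,v\}$ is a clique of size $|K|+1$, contradicting maximality. Second, fixing $x\in A$, one shows $N(d)\cap(I\setminus\{v\})\subseteq N(v)\cap(I\setminus\{v\})$: if $w\in I\setminus\{u,v\}$ satisfies $w\sim d$ but $w\nsim v$, then either $w\nsim u$, and $\{u,v,w,d\}$ induces $2K_2$; or $w\sim u$ and $w\sim x$, and $\{u,w,x,v\}$ induces $C_4$; or $w\sim u$ and $w\nsim x$, and $v\sim u\sim w\sim d\sim x\sim v$ is an induced $C_5$ (this is the only place $C_5$ enters). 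Since $u\sim v$ but $u\nsim d$, the inclusion is strict, so passing to $K'$ strictly decreases the number of edges on the independent side, contradicting your extremal choice. With this case analysis inserted, your proof is complete and agrees with the argument cited in \cite{MR0505860}; without it, the proposal is an outline rather than a proof.
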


A graph $\Gamma$ is said to be a \emph{cograph} if it has no induced subgraph isomorphic to $P_4$. A \emph{threshold} graph is the graph which does not contain an induced subgraph isomorphic to $P_4, C_4$ or $2K_2$. Every threshold  graph is a cograph as well as split graph. A \emph{cactus} graph is a connected graph where any two simple cycles have at most one vertex in common. A connected graph is said to be \emph{unicyclic} if it contains exactly one cycle. A graph $\Gamma$ is \emph{outerplanar} if it can be embedded in the plane such that all vertices lie on the outer face.  A graph $\Gamma$ is \emph{planar} if it can be drawn on a plane without edge crossing. It is well known that every outerplanar graph is a planar graph. Now we have the following known results related to outerplanar and planar graphs. 
\begin{theorem}\cite{westgraph}\label{outerplanar criteria}
A graph $\Gamma$ is outerplanar if and only if it  does not contain a subdivision of $K_4$ or $K_{2,3}$.
\end{theorem}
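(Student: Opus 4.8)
The plan is to establish this as the outerplanar analogue of Kuratowski's planarity theorem, which I will take as available. The necessity direction is the easier one. Outerplanarity is closed under passing to subgraphs, since an outerplanar embedding restricts to an outerplanar embedding of any subgraph, and it is invariant under subdivision, since subdividing an edge amounts to placing the new vertex on the arc realizing that edge (and suppressing a degree-two vertex reverses this) without disturbing which vertices lie on the outer face. Hence it suffices to verify directly that neither $K_4$ nor $K_{2,3}$ is outerplanar: in each case a short case analysis on the cyclic order forced on the four or five boundary vertices produces a pair of crossing chords, so no embedding with all vertices on the outer face exists. Combining invariance under subdivision with the subgraph property, any $\Gamma$ containing a subdivision of $K_4$ or $K_{2,3}$ cannot be outerplanar.

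For sufficiency I would reduce to Kuratowski's theorem via the cone construction. Let $\widehat{\Gamma}$ denote the graph obtained from $\Gamma$ by adjoining one new vertex $w$ adjacent to every vertex of $\Gamma$. The key lemma is that $\Gamma$ is outerplanar if and only if $\widehat{\Gamma}$ is planar: from an outerplanar drawing of $\Gamma$ one places $w$ in the outer face and joins it to all vertices without crossings, while from a planar embedding of $\widehat{\Gamma}$ one uses the fact that the faces incident to $w$ let us redraw so that deleting $w$ leaves all of $\Gamma$ incident to a single face, giving an outerplanar drawing of $\Gamma$.

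Now I apply Kuratowski's theorem to $\widehat{\Gamma}$: it is planar if and only if it contains no subdivision of $K_5$ or $K_{3,3}$. The crux, and the step I expect to be the main obstacle, is the exact correspondence between forbidden configurations in $\widehat{\Gamma}$ and in $\Gamma$. I would argue that $\widehat{\Gamma}$ contains a subdivision of $K_5$ or $K_{3,3}$ if and only if $\Gamma$ contains a subdivision of $K_4$ or $K_{2,3}$. One direction is straightforward: a $K_4$-subdivision in $\Gamma$ together with $w$ joined to its four branch vertices yields a $K_5$-subdivision in $\widehat{\Gamma}$, and a $K_{2,3}$-subdivision with $w$ adjoined to the two vertices of the part of size two yields a $K_{3,3}$-subdivision. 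The reverse direction requires care: given a $K_5$- or $K_{3,3}$-subdivision of $\widehat{\Gamma}$, one must analyze the role of $w$ and show it may be taken to be a branch vertex, so that deleting it leaves a subdivision of $K_4$ or $K_{2,3}$ inside $\Gamma$. The difficulty lies in the cases where $w$ lies in the interior of a subdivided edge, or participates only incidentally; here rerouting arguments, using that $w$ is adjacent to all of $\Gamma$, reduce matters to the branch-vertex case. Assembling the cone lemma, Kuratowski's theorem, and this correspondence yields the claimed equivalence.
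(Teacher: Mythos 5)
The paper offers no proof of this statement to compare against: it is quoted verbatim from West \cite{westgraph} as a preliminary, so your proposal is competing with a citation, not with an argument in the text. That said, your route is the standard textbook proof of this classical fact, and it is essentially sound. The necessity half (outerplanarity is closed under subgraphs and invariant under subdivision; $K_4$ and $K_{2,3}$ admit no drawing with all vertices on the outer face) is fine, and the cone lemma --- $\Gamma$ is outerplanar if and only if $\widehat{\Gamma}=\Gamma\vee K_1$ is planar, the converse direction using that deleting $w$ from a planar embedding merges its incident faces into a single face whose boundary contains every neighbour of $w$, i.e.\ all of $V(\Gamma)$ --- is correct as stated.

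The one soft spot is exactly the step you flagged: in the reverse correspondence you propose to ``reroute'' so that $w$ may be assumed to be a branch vertex of the Kuratowski subdivision of $\widehat{\Gamma}$. That is both harder than necessary and not obviously executable, since rerouting through edges from $w$ must preserve the internal disjointness of all the paths of the subdivision. The clean repair avoids rerouting entirely. If $w$ does not occur in the subdivision, use $K_4\subseteq K_5$ and $K_{2,3}\subseteq K_{3,3}$. If $w$ is a branch vertex, delete it: a $K_5$-subdivision loses one branch vertex and leaves a $K_4$-subdivision in $\Gamma$, and a $K_{3,3}$-subdivision leaves a $K_{2,3}$-subdivision. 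If $w$ is internal to the subdivided edge joining branch vertices $u$ and $v$, delete the branch vertex $u$ outright: in the $K_5$ case the four remaining branch vertices together with their intact, $w$-free connecting paths (the only path through $w$ was the $u$--$v$ path, and subdivision paths are internally disjoint) form a $K_4$-subdivision inside $\Gamma$; in the $K_{3,3}$ case, with $u\in A$ and $v\in B$, deleting $u$ leaves a full $K_{2,3}$-subdivision on the parts $A\setminus\{u\}$ and $B$. With that substitution your outline closes up into a complete and correct proof.
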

\begin{theorem}\cite{westgraph}\label{planar criteria}
A graph $\Gamma$ is planar if and only if it does not contain a subdivision of $K_5$ or $K_{3,3}$.
\end{theorem}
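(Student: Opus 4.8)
The statement is Kuratowski's theorem, and I would prove the two implications separately, the reverse one carrying essentially all of the difficulty.

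\emph{Necessity (planar $\Rightarrow$ no Kuratowski subdivision).} First I would establish that $K_5$ and $K_{3,3}$ are themselves non-planar using Euler's formula: a connected planar graph with $n$ vertices, $m$ edges and $f$ faces satisfies $n - m + f = 2$, and since each face is bounded by at least three edges while each edge lies on at most two faces, one derives $m \leq 3n - 6$ whenever $n \geq 3$. For $K_5$ this fails because $10 = m > 3 \cdot 5 - 6 = 9$; for the bipartite graph $K_{3,3}$ the sharper bound $m \leq 2n - 4$ fails because $9 = m > 2 \cdot 6 - 4 = 8$. I would then observe that planarity passes to subgraphs and is invariant under subdividing edges, so a graph that contains a subdivision of $K_5$ or $K_{3,3}$ cannot be planar.

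\emph{Sufficiency (no Kuratowski subdivision $\Rightarrow$ planar).} Here I would argue by contradiction via a minimal counterexample and first reduce to the $3$-connected case. Using the block decomposition, together with the fact that a graph is planar exactly when each of its blocks is, and splitting along any $2$-vertex cut, one reduces a would-be counterexample to strictly smaller pieces; since a Kuratowski subdivision in $G$ would persist in, or be reconstructible from, one of these pieces, the minimal counterexample may be assumed $3$-connected with at least five vertices.

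\emph{The $3$-connected core.} For the $3$-connected case I would invoke Tutte's lemma that every $3$-connected graph on at least five vertices has an edge $e = xy$ whose contraction $G/e$ remains $3$-connected. A lifting argument shows that $G/e$ is again free of subdivisions of $K_5$ and $K_{3,3}$, so by induction $G/e$ is planar and, by Tutte's theorem on $3$-connected planar graphs, admits an embedding in which every face is bounded by a cycle. Deleting the contracted vertex leaves a cycle $C$ carrying all neighbors of $x$ and $y$; the neighbors of $x$ split $C$ into arcs, and if every neighbor of $y$ falls into a single such arc one reassembles a planar embedding of $G$. \textbf{The main obstacle} is the complementary case analysis: when the neighbors of $x$ and $y$ interleave on $C$ one must exhibit a concrete subdivision of $K_{3,3}$ (two neighbors of $y$ separated by two neighbors of $x$) or of $K_5$ (three common neighbors of $x$ and $y$), contradicting the hypothesis and thereby closing the induction. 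Verifying that these are the only obstructions, and that the arcs-and-interleaving bookkeeping is exhaustive, is the delicate part of the argument.
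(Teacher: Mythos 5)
This statement is Kuratowski's theorem; the paper gives no proof of it, quoting it from \cite{westgraph} as classical background, so there is no in-paper argument to compare against. Your sketch is the standard modern proof (essentially Thomassen's, as presented in Diestel's textbook) and it is correct in outline: Euler-formula edge counts ($m \leq 3n-6$, respectively $m \leq 2n-4$ for bipartite graphs) for necessity, reduction of a minimal counterexample to the $3$-connected case, contraction of an edge preserving $3$-connectedness, and the arcs-versus-interleaving dichotomy on the cycle bounding the face that contained the contracted vertex, with interleaved neighbours producing a subdivision of $K_{3,3}$ and three common neighbours of $x$ and $y$ producing a subdivision of $K_5$.

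Three points deserve care if you flesh the sketch out. First, the contraction lemma you attribute to Tutte (every $3$-connected graph on at least five vertices has an edge whose contraction stays $3$-connected) is Thomassen's; Tutte's wheel theorem is the alternative route, and what you actually need from planarity of the $3$-connected $G/e$ is only that, after deleting the contracted vertex, the face it occupied is bounded by a cycle, which follows since $G/e$ minus a vertex is $2$-connected. Second, in the reduction along a $2$-vertex cut $\{u,v\}$ you must add the edge $uv$ to each side before applying induction (otherwise the planar embeddings of the sides need not glue along $u,v$), and you must then verify that this added edge creates no Kuratowski subdivision; your phrase ``persist in, or be reconstructible from'' runs the needed implication backwards --- the correct statement is that a Kuratowski subdivision in a piece together with the edge $uv$ lifts to one in $G$, the edge being simulated by a $u$--$v$ path through the other piece. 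Third, in the lifting step for $G/e$, un-contracting the edge can split a branch vertex of a $K_5$-subdivision into two vertices of degree $3$, so that $G$ contains only a $K_{3,3}$-subdivision; this is precisely why the two forbidden graphs must be excluded simultaneously, and your induction hypothesis should be phrased for the combined property rather than for each graph separately.
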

Recall that a chord in a graph $\Gamma$ is an edge joining two non adjacent vertices in a cycle of $\Gamma$. A cycle $C$ is said to be \emph{primitive} if it has no chord. A graph $\Gamma$ satisfies  the primitive cycle property (PCP) if any two primitive cycles intersect in at most one edge. The \emph{free rank} of graph $\Gamma$, denoted by $frank(\Gamma)$, is the number of primitive cycles in $\Gamma$. The cycle rank $rank(\Gamma)$ of $\Gamma$ is the number $|E(\Gamma)|-|V(\Gamma)| + \mathcal{C}$, where $\mathcal{C}$ is the number of connected components of $\Gamma$.


A compact connected topological space such that each point has a neighbourhood homeomorphic to an open disc in $\mathbb{R}^2$ is called a surface. An embedding of a graph $\Gamma$ on a surface $\mathbb{S}$ is $2-$cell embedding if each component of $\mathbb{S}- \Gamma$ is homeomorphic to an open disc in $\mathbb{R}^2$. A $2-$cell embedding is said to be \emph{triangular} if all the faces have boundary consisting of exactly three edges. Let $\mathbb{S}_{\mathbb{g}}$ denote the orientable surface with $\mathbb{g}$ handles, where $\mathbb{g}$ is a non-negative integer. The \emph{genus} of a graph $\Gamma$, denoted by $\mathbb{g}(\Gamma)$, is the minimum integer $\mathbb{g}$ such that $\Gamma$ can be embedded in $\mathbb{S}_{\mathbb{g}}$ that is the graph $\Gamma$ can be drawn into a surface $\mathbb{S}_{\mathbb{g}}$ without edge crossing. The following results are useful in the sequel.

\begin{proposition}{\cite[Ringel and Youngs]{white1985graphs}}
\label{genus}
 Let $n \geq 3$ be a positive integer. Then $\mathbb{g}(K_n) =$ $\left\lceil \frac{(n-3)(n-4)}{12} \right \rceil $.
\end{proposition}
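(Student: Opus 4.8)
The plan is to prove the equality by sandwiching $\mathbb{g}(K_n)$ between a matching lower and upper bound. For the lower bound I would take any $2$-cell embedding of the connected graph $K_n$ into the orientable surface $\mathbb{S}_{\mathbb{g}}$ and apply Euler's formula $V - E + F = 2 - 2\mathbb{g}$. Here $V = n$ and $E = \binom{n}{2} = \frac{n(n-1)}{2}$. Since each face is bounded by at least three edges and each edge lies on the boundary of at most two faces, we have $3F \le 2E$, hence $F \le \frac{2E}{3}$. Substituting into Euler's formula gives
\[
2 - 2\mathbb{g} = V - E + F \le V - \frac{E}{3},
\]
so that, after simplification,
\[
\mathbb{g} \ge 1 - \frac{n}{2} + \frac{n(n-1)}{12} = \frac{n^2 - 7n + 12}{12} = \frac{(n-3)(n-4)}{12}.
\]
As $\mathbb{g}$ is an integer, this forces $\mathbb{g}(K_n) \ge \left\lceil \frac{(n-3)(n-4)}{12} \right\rceil$.

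For the matching upper bound the task is to actually \emph{construct} an embedding of $K_n$ achieving this value. Equality in $3F = 2E$ occurs exactly when every face is a triangle, so the first observation is that a triangular embedding is arithmetically possible precisely when the formula is an integer, i.e. for $n \equiv 0, 3, 4, 7 \pmod{12}$; for the other residues one must instead build an embedding with a small, controlled number of larger faces that still meets the ceiling. The standard tool is to encode an orientable embedding by a \emph{rotation scheme}, assigning to each vertex a cyclic ordering of its incident edges, and then to generate a suitable rotation scheme from a \emph{current graph} carrying currents in a cyclic group $\mathbb{Z}_N$. The Kirchhoff-type current conditions at each vertex are arranged so that the induced face-tracing rule produces only triangular faces (or the prescribed few exceptional faces), after which a second application of Euler's formula confirms that the genus is exactly $\left\lceil \frac{(n-3)(n-4)}{12} \right\rceil$.

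The construction then proceeds by twelve cases according to $n \bmod 12$, verifying in each that the chosen current graph yields a valid triangular (or nearly triangular) embedding. The hard part, and the genuine mathematical content of the result, is exactly this case-by-case synthesis: several residue classes do not yield to a single uniform current-graph pattern and require ad hoc modifications, extra handles, or explicit small base configurations to which the general scheme is attached. Welding all twelve families into the single clean formula above is precisely the Ringel--Youngs solution of the Heawood map-coloring problem, and I would invoke that construction to supply the upper bound and thereby close the gap with the Euler-formula lower bound established in the first paragraph.
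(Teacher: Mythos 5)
The paper does not actually prove this proposition---it is quoted, via White's book, as the Ringel--Youngs theorem---and your outline follows exactly the standard route: your Euler-formula lower bound (with $3F\le 2E$ giving $\mathbb{g}(K_n)\ge \lceil (n-3)(n-4)/12\rceil$, and the residues $n\equiv 0,3,4,7 \pmod{12}$ correctly identified as the triangular cases) is complete and correct, and deferring the matching upper bound to the twelve-case current-graph construction of Ringel and Youngs is legitimate, since that construction is the entire content of the cited result and is precisely what the paper itself invokes. The one implicit step worth making explicit is that applying Euler's formula to a minimum-genus embedding relies on Youngs' theorem that a minimal-genus embedding of a connected graph is automatically a $2$-cell embedding; with that noted, your argument is as much of a proof as can reasonably be given without reproducing the Ringel--Youngs casework.
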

 \begin{lemma}{\cite[Theorem 5.14]{white1985graphs}}
 \label{eulerformulagenus}
 Let $\Gamma$ be a connected graph, with a 2-cell embedding in $\mathbb{S}_{\mathbb{g}}$. Then $v - e + f = 2 - 2 \mathbb{g}$, where $v, e$ and $f$ are the number of vertices, edges and faces embedded in $\mathbb{S}_{\mathbb{g}}$, respectively and $\mathbb{g}$ is the genus of surface of a graph embedded. 
\end{lemma}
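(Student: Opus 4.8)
The plan is to read the identity as a computation of the Euler characteristic of $\mathbb{S}_{\mathbb{g}}$ through the cell decomposition supplied by the embedding, and then to invoke the topological invariance of that characteristic. Because the embedding is a $2$-cell embedding, each of the $f$ faces is homeomorphic to an open disc; together with the $v$ vertices and $e$ edges, these faces make $\mathbb{S}_{\mathbb{g}}$ into a finite CW complex whose $0$-cells, $1$-cells and $2$-cells number $v$, $e$ and $f$, respectively. By definition the alternating sum $v - e + f$ is the Euler characteristic $\chi$ of this complex.

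First I would establish that $\chi$, computed from any such cell decomposition, depends only on the homeomorphism type of the underlying surface and not on the decomposition. This is the essential topological input; it follows from the invariance of homology, since $\chi = \sum_{i \geq 0} (-1)^i \dim H_i(\mathbb{S}_{\mathbb{g}})$, or, more elementarily, from the classification of compact orientable surfaces, by reducing any decomposition to a standard polygonal one. Next I would compute $\chi(\mathbb{S}_{\mathbb{g}})$ from the standard model: presenting $\mathbb{S}_{\mathbb{g}}$ as a $4\mathbb{g}$-gon with boundary identifications given by the word $a_1 b_1 a_1^{-1} b_1^{-1} \cdots a_{\mathbb{g}} b_{\mathbb{g}} a_{\mathbb{g}}^{-1} b_{\mathbb{g}}^{-1}$ yields one $0$-cell, $2\mathbb{g}$ $1$-cells and one $2$-cell, whence $\chi(\mathbb{S}_{\mathbb{g}}) = 1 - 2\mathbb{g} + 1 = 2 - 2\mathbb{g}$. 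Combining the two facts gives $v - e + f = 2 - 2\mathbb{g}$.

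A purely combinatorial alternative proceeds by induction, deleting edges while controlling the effect on $v - e + f$. If an edge lies on the boundary of two distinct faces, it is not a bridge, so deleting it keeps the graph connected, merges the two discs into one (preserving the $2$-cell property), and leaves $v - e + f$ unchanged since both $e$ and $f$ drop by one. Using the connectedness of the dual of a $2$-cell embedding, iterating reduces to a one-face embedding, for which the claim $v - e + 1 = 2 - 2\mathbb{g}$ is equivalent to the statement that the first Betti number $e - v + 1$ of the graph equals $2\mathbb{g}$; this last equality, obtained by contracting a spanning tree to a bouquet of $2\mathbb{g}$ circles carrying the standard polygonal identification, is the delicate step.

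In either route the crux is the same conceptual point: that $v - e + f$ is an invariant of the surface rather than of the particular embedded graph. This is exactly where the $2$-cell hypothesis and the topology of $\mathbb{S}_{\mathbb{g}}$ are indispensable; once invariance and the base value $\chi(\mathbb{S}_{\mathbb{g}}) = 2 - 2\mathbb{g}$ are secured, the rest is formal.
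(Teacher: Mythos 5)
The paper does not actually prove this lemma: it is imported verbatim, with the citation \cite[Theorem 5.14]{white1985graphs}, and then used as a black box in the genus computations of Section 4 (e.g., in Theorems \ref{genus1result} and \ref{genus2result}) to rule out embeddings by face counts. So there is no in-paper argument to compare against, and your proposal must be judged as a proof of the cited classical fact; as such it is correct and standard. Your first route (a $2$-cell embedding endows $\mathbb{S}_{\mathbb{g}}$ with a CW structure whose cells number $v$, $e$, $f$; the alternating sum is the Euler characteristic; homology makes $\chi$ a topological invariant; the $4\mathbb{g}$-gon model gives $\chi(\mathbb{S}_{\mathbb{g}}) = 2 - 2\mathbb{g}$) is the algebraic-topology proof, and the only caveat is that the $4\mathbb{g}$-gon presentation requires $\mathbb{g} \geq 1$, so the sphere should be handled separately (one $0$-cell and one $2$-cell already give $\chi = 2$). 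Your second, combinatorial route is essentially the argument in topological graph theory texts such as White's: the edge-deletion step is sound (in a $2$-cell embedding a bridge appears twice on the boundary walk of a single face, so an edge bordering two distinct faces is never a bridge, and connectedness of the dual guarantees such an edge exists while $f > 1$), and the reduction via contracting a spanning tree to a one-vertex, one-face polygon is right. The one point needing care is precisely the step you flag as delicate: deducing $e - v + 1 = 2\mathbb{g}$ from the classification of surfaces risks circularity, because the usual way to read off the genus of an identification polygon is itself Euler-characteristic bookkeeping. The clean fix is to verify directly that the elementary cut-and-paste moves in the classification proof preserve $v - e + f$; the word for your $2(e-v+1)$-gon then reduces to the standard genus-$\mathbb{g}$ form with one vertex, $2\mathbb{g}$ edges and one face, and invariance forces $e - v + 1 = 2\mathbb{g}$. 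With that made explicit, either of your routes is a complete proof, and the second is the one closest in spirit to the source the paper cites.
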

\begin{lemma}\cite{white2001graphs}\label{genusofblocks}
The genus of a connected graph $\Gamma$ is the sum of the genera of its blocks.
\end{lemma}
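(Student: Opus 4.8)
The plan is to prove the two inequalities $\mathbb{g}(\Gamma) \le \sum_{i} \mathbb{g}(B_i)$ and $\mathbb{g}(\Gamma) \ge \sum_{i} \mathbb{g}(B_i)$, where $B_1, \dots, B_k$ are the blocks of $\Gamma$. Since any two blocks meet in at most one vertex (a cut vertex) and the block--cut-vertex structure of a connected graph is a tree, it suffices by induction on $k$ to treat the two-block reduction: write $\Gamma = G_1 \cup G_2$ with $V(G_1) \cap V(G_2) = \{v\}$ and $E(G_1) \cap E(G_2) = \varnothing$, where $G_2$ is a leaf block of the block--cut-vertex tree and $G_1$ carries the remaining blocks. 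I would then prove $\mathbb{g}(\Gamma) = \mathbb{g}(G_1) + \mathbb{g}(G_2)$ and close the argument by applying the inductive hypothesis to $G_1$.

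For the upper bound I would start from minimal $2$-cell embeddings of $G_1$ and $G_2$ in $\mathbb{S}_{\mathbb{g}(G_1)}$ and $\mathbb{S}_{\mathbb{g}(G_2)}$, respectively. In each surface the vertex $v$ lies on the boundary of some face; I would excise a small open disc from such a face in each surface, arranging the two boundary circles to each pass through (a copy of) $v$, and then glue the two surfaces along these circles, identifying the two copies of $v$. The resulting surface is the connected sum $\mathbb{S}_{\mathbb{g}(G_1)} \,\#\, \mathbb{S}_{\mathbb{g}(G_2)} = \mathbb{S}_{\mathbb{g}(G_1) + \mathbb{g}(G_2)}$, and the two partial embeddings fit together into an embedding of $\Gamma$. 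Hence $\mathbb{g}(\Gamma) \le \mathbb{g}(G_1) + \mathbb{g}(G_2)$.

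For the lower bound I would fix a minimal embedding of $\Gamma$ in $\mathbb{S}_{\mathbb{g}(\Gamma)}$, which may be taken to be a $2$-cell embedding, and exploit that $v$ is a cut vertex. The edges incident to $v$ split into those lying in $G_1$ and those lying in $G_2$, and no edge of $G_1$ is adjacent to an edge of $G_2$ except through $v$. The goal is to produce a simple closed curve on the surface that meets $\Gamma$ only at $v$ and separates the embedded $G_1$ from the embedded $G_2$; cutting the surface along this curve and capping each resulting boundary circle with a disc then yields surfaces $\mathbb{S}'$ and $\mathbb{S}''$ carrying embeddings of $G_1$ and $G_2$ with $\mathbb{g}(\mathbb{S}') + \mathbb{g}(\mathbb{S}'') \le \mathbb{g}(\Gamma)$, which gives $\mathbb{g}(G_1) + \mathbb{g}(G_2) \le \mathbb{g}(\Gamma)$.

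The main obstacle is the lower bound, and specifically the construction of the separating curve through $v$: the rotation of the edges around $v$ in the given embedding need not place the $G_1$-edges and the $G_2$-edges in two contiguous arcs, so one cannot simply read the curve off a single face. The technical heart of the argument (as in the classical additivity theorem of Battle--Harary--Kodama--Youngs) is to show that a minimal embedding can be normalized---by local surgeries at $v$ together with an interchange analysis of the rotation system---so that such a separating curve exists without increasing the genus. Once this normalization is achieved, the cut-and-cap operation is routine and the induction closes.
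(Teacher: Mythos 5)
The paper does not actually prove this lemma: it is quoted from White's monograph, and it is the classical one-vertex amalgamation (additivity) theorem of Battle, Harary, Kodama and Youngs. So there is no internal proof to compare against, and your outline follows exactly the route taken in that cited literature: reduction along the block--cut-vertex tree to the two-block case $\Gamma = G_1 \cup G_2$ with $V(G_1) \cap V(G_2) = \{v\}$, a connected-sum construction for the inequality $\mathbb{g}(\Gamma) \le \mathbb{g}(G_1) + \mathbb{g}(G_2)$ (which is correct and essentially complete as you describe it, modulo the standard fact, due to Youngs, that genus-minimal embeddings of connected graphs may be taken to be $2$-cell), and a cut-and-cap argument for the reverse inequality.

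The genuine gap is the one you flag yourself but do not close: in an arbitrary minimal embedding of $\Gamma$ there need not exist a simple closed curve meeting the embedded graph only at $v$ and separating $G_1$ from $G_2$. The obstruction is not only that the rotation at $v$ may interleave the $G_1$-edges and $G_2$-edges; even with a favorable local rotation, $G_2$ may wind through handles carried by $G_1$, so no face-tracing shortcut produces the curve. Writing that a minimal embedding ``can be normalized by local surgeries at $v$ together with an interchange analysis of the rotation system'' names the theorem rather than proves it: that normalization \emph{is} the content of the Battle--Harary--Kodama--Youngs theorem, and without executing it --- or replacing it by an equivalent argument, such as the rotation-system proof in which one restricts the rotation of a minimal embedding to $G_1$ and to $G_2$ and compares the resulting face counts through Euler's formula --- your lower bound does not stand on its own. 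In summary: the strategy is the right one and you have correctly isolated where the difficulty sits, but the decisive step of the hard direction is deferred to the literature rather than established, so as a self-contained proof the attempt is incomplete (though no more incomplete than the paper, which offers only the citation).
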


Let $\mathbb{N}_{k}$ denotes the non-orientable surface formed by the connected sum of $k$ projective planes, that is, $\mathbb{N}_{k}$ is a non-orientable surface with $k$ crosscap. The \emph{crosscap} of a graph $\Gamma$, denoted by $cr(\Gamma)$, is the minimum non-negative integer $k$ such that $\Gamma$ can be embedded in $\mathbb{N}_{k}$. For instance, a graph $\Gamma$ is planar if $cr(\Gamma) = 0$ and the  $\Gamma$ is projective if $cr(\Gamma) = 1$. The following results are useful to obtain the crosscap of a graph.
\begin{proposition}\label{crosscap}{\cite[Ringel and Youngs]{mohar2001graphs}}
 Let $n$ be a positive integer. Then
 \begin{center}
$ cr(K_n) =
 \begin{cases} 
\left\lceil \frac{(n-3)(n-4)}{6} \right\rceil,&  \textit{if}~~~~~ n\geq 3 \\
    3, & \textit{if}~~~~~ n =7.
   \end{cases}$
   \end{center}
\end{proposition}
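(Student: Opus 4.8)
The plan is to prove the formula by establishing a lower bound via an Euler-characteristic counting argument and a matching upper bound via explicit embeddings, treating $n=7$ as an exceptional case. For the lower bound, I would first record the non-orientable analogue of Lemma \ref{eulerformulagenus}: if a connected graph with $v$ vertices and $e$ edges admits a $2$-cell embedding in $\mathbb{N}_k$ with $f$ faces, then $v - e + f = 2 - k$. Since $K_n$ is simple, each face boundary traverses at least three edges and each edge lies on the boundary of at most two faces, so $3f \le 2e$, giving $f \le \tfrac{2}{3}e$.

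For $K_n$ we have $v = n$ and $e = \binom{n}{2} = \tfrac{n(n-1)}{2}$. Substituting into the Euler relation,
\begin{equation*}
k = 2 - v + e - f \ge 2 - n + e - \tfrac{2}{3}e = 2 - n + \tfrac{1}{3}e = 2 - n + \frac{n(n-1)}{6} = \frac{(n-3)(n-4)}{6}.
\end{equation*}
As $cr(K_n)$ is a non-negative integer, this yields $cr(K_n) \ge \left\lceil \frac{(n-3)(n-4)}{6} \right\rceil$, which is the claimed value for every $n \ge 3$.

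The harder half is the upper bound: for each $n \ge 3$, with the single exception $n=7$, one must exhibit an embedding of $K_n$ into $\mathbb{N}_k$ with $k = \left\lceil \frac{(n-3)(n-4)}{6} \right\rceil$. The main obstacle here is that, unlike the lower bound, there is no uniform construction; the standard approach specifies a rotation system (a cyclic ordering of the edges incident to each vertex, together with edge signs) and verifies that the induced face-tracing produces exactly the required number of faces, so that equality holds in $3f \le 2e$. Ringel and Youngs realize such rotation systems through the machinery of \emph{current graphs}, splitting into residue classes of $n$ and building, for each class, a triangular (or near-triangular) embedding attaining the bound. Carrying this out in full is a lengthy case analysis and constitutes the bulk of the original proof; I would invoke it rather than reproduce it.

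Finally, the exceptional value $n = 7$ must be handled separately. The lower bound above gives $cr(K_7) \ge \lceil 2 \rceil = 2$, and equality would force a triangular embedding of $K_7$ into the Klein bottle $\mathbb{N}_2$ with $v = 7$, $e = 21$, $f = 14$, since then $v - e + f = 0 = 2 - k$. The key step is to rule this out: one shows that $K_7$ does not triangulate $\mathbb{N}_2$ (while it does triangulate the torus, realizing orientable genus $1$), so that the numerically admissible value $k=2$ is not geometrically realizable. Hence $cr(K_7) \ge 3$, and exhibiting one explicit embedding of $K_7$ into $\mathbb{N}_3$ gives $cr(K_7) = 3$, completing the proof.
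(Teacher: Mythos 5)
The paper offers no proof of this proposition to compare against: it is quoted, with citation, from Mohar--Thomassen and attributed to Ringel and Youngs, so the ``paper's proof'' is simply an appeal to the literature. Your sketch reproduces the standard argument behind that citation, and its logic is sound: the lower-bound computation $k \ge 2 - n + \frac{1}{3}e = \frac{(n-3)(n-4)}{6}$ is correct; your treatment of $n=7$ is exactly right (if $cr(K_7) = 2$, then $7 - 21 + f = 0$ forces $f = 14$ and $2e = 42 = 3f$, i.e.\ a triangulation of the Klein bottle, which is impossible --- indeed by Franklin's 1934 theorem $K_7$ does not embed in $\mathbb{N}_2$ at all); and deferring the matching upper bound to the current-graph constructions is the appropriate move, since no short uniform construction is known and reproducing the residue-class case analysis would amount to rewriting a monograph. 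Two small points deserve attention. First, the paper's Lemma \ref{eulerformula} is stated only for $2$-cell embeddings, so your lower bound needs the standard (but not entirely trivial) fact that a crosscap-minimal embedding of a connected graph may be taken to be $2$-cell; as written, the inequality $k \ge 2 - v + e - f$ is applied to a minimal embedding without this justification. Likewise $f \le \frac{2}{3}e$ uses that every face of a $2$-cell embedding of a simple graph on at least three vertices has boundary length at least three, which holds for $K_n$, $n \ge 3$. Second, a historical correction: the non-orientable formula, including the exceptional case $n=7$, is due to Ringel alone (1954); Youngs is a coauthor only of the orientable genus formula (Proposition \ref{genus}). Finally, you have silently repaired the proposition's overlapping case split --- the first line should read $n \ge 3$, $n \ne 7$ --- and your reading is the correct one.
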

\begin{lemma}\label{eulerformula}{\cite[Lemma 3.1.4]{mohar2001graphs}}
Let $\phi : \Gamma \rightarrow \mathbb{N}_{k}$ be a $2$-cell embedding of a connected graph $\Gamma$ to the non-orientable surface $\mathbb{N}_{k}$. Then $v - e + f = 2 - k$, where $v, e$ and $f$ are the number of vertices, edges and faces of $\phi(\Gamma)$ respectively, and $k$ is the crosscap of $\mathbb{N}_{k}$.
\end{lemma}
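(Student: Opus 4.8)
The plan is to recognise that a $2$-cell embedding $\phi:\Gamma\to\mathbb{N}_k$ is exactly a cellular decomposition of the surface $\mathbb{N}_k$: the vertices are the $0$-cells, the edges are the $1$-cells, and the $2$-cell hypothesis guarantees that each of the $f$ faces is an open disc, hence a genuine $2$-cell. From this viewpoint the integer $v-e+f$ is the \emph{Euler characteristic} $\chi$ of that decomposition, and the statement splits into two independent assertions: first, that $\chi=v-e+f$ does not depend on which $2$-cell embedding of $\mathbb{N}_k$ one chooses, i.e. it is a topological invariant of the surface; and second, that for the particular surface $\mathbb{N}_k$ this invariant equals $2-k$. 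Granting these, every $2$-cell embedding of any connected $\Gamma$ into $\mathbb{N}_k$ yields the same value $v-e+f=2-k$, which is the assertion.

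For the invariance step I would first reduce an arbitrary $2$-cell embedding to a triangulation. A face bounded by $m\ge 4$ edges can be cut by a diagonal into two faces; this replaces one face by two and adds one edge, so $v-e+f$ is unchanged, and iterating triangulates every face. Likewise, subdividing an edge by inserting a new degree-two vertex adds one vertex and one edge, again leaving $v-e+f$ fixed. Having produced triangulations, I would invoke the classical fact that any two triangulations of the same closed surface are connected by a finite sequence of local (bistellar) moves, each of which manifestly preserves $v-e+f$; this shows the common value is an invariant of $\mathbb{N}_k$. An alternative and, in my view, cleaner route is purely algebraic: the cellular Euler characteristic equals $\sum_i(-1)^i\,\mathrm{rank}\,H_i(\mathbb{N}_k)$, an expression that visibly depends only on the homotopy type of the surface.

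For the computation I would use the canonical polygonal normal form of $\mathbb{N}_k$, obtained from a $2k$-gon by identifying its sides in consecutive pairs according to the boundary word $a_1a_1a_2a_2\cdots a_ka_k$. This identification is itself a $2$-cell embedding whose cells can be counted directly: the single polygon gives $f=1$; the $2k$ sides are identified into the $k$ distinct edges $a_1,\dots,a_k$, so $e=k$; and tracking the edge-identifications shows that all $2k$ corners are glued to one point, so $v=1$. Hence $v-e+f=1-k+1=2-k$, and by the invariance above this value holds for every $2$-cell embedding. This mirrors the orientable formula $v-e+f=2-2\mathbb{g}$ of Lemma~\ref{eulerformulagenus}, where the normal word $a_1b_1a_1^{-1}b_1^{-1}\cdots a_{\mathbb{g}}b_{\mathbb{g}}a_{\mathbb{g}}^{-1}b_{\mathbb{g}}^{-1}$ yields $2\mathbb{g}$ edges, one face and one vertex.

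The main obstacle is the invariance step, since it is precisely here that topology, rather than bookkeeping, enters. The diagonal-insertion and edge-subdivision moves are routine, but passing between two \emph{unrelated} triangulations of $\mathbb{N}_k$ requires a genuine theorem, either the connectivity of triangulations under bistellar moves or the homotopy invariance of cellular homology, and it is this input, not the final cell count, that carries the real content. A minor but necessary technical point inside the computation is verifying that the word $a_1a_1\cdots a_ka_k$ truly collapses all corners of the $2k$-gon to a single vertex; this is confirmed by following the tail/head matchings induced by the identifications, exactly as in the cases $k=1$ (the bigon $aa$ giving $\mathbb{RP}^2$) and $k=2$ (the square $aabb$ giving the Klein bottle).
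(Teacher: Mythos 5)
The paper contains no proof of this lemma: it is imported verbatim, with the citation to Lemma 3.1.4 of Mohar and Thomassen's \emph{Graphs on Surfaces}, and used downstream as a black box in the crosscap arguments. So there is nothing internal to compare your proposal against; what you have written is the standard textbook proof, and it is essentially correct. Your decomposition of the claim is the right one: a $2$-cell embedding equips $\mathbb{N}_k$ with a CW structure whose cell count $v-e+f$ is the Euler characteristic, invariance reduces everything to a single computation, and the canonical $2k$-gon with boundary word $a_1a_1a_2a_2\cdots a_ka_k$ has $f=1$, $e=k$, and (as you correctly flag must be checked, and as the corner-tracing confirms) $v=1$, giving $v-e+f=2-k$. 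One caveat on your primary invariance route: the bistellar-move theorem is stated for \emph{simplicial} triangulations, whereas cutting the faces of a $2$-cell embedded graph by diagonals typically produces non-simplicial triangulations (loops, multiple edges, faces whose boundary walks repeat vertices are all possible in $2$-cell embeddings), so you would need an extra normalization step such as barycentric subdivision, which also preserves $v-e+f$, before invoking it. Your homological alternative sidesteps this entirely and is the cleaner choice: $\chi=\sum_i(-1)^i\,\mathrm{rank}\,H_i$ depends only on the homotopy type, and for $\mathbb{N}_k$ one has $\mathrm{rank}\,H_0=1$, $\mathrm{rank}\,H_1=k-1$ (rationally), $\mathrm{rank}\,H_2=0$ by non-orientability, recovering $2-k$ directly; I would promote that to the main argument and leave the moves as a remark. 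With that adjustment your proof is complete and matches what the cited source establishes.
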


\begin{definition}\cite{white2001graphs}
 A graph $\Gamma$ is orientably simple if $\mu(\Gamma) \neq 2-cr(\Gamma)$, where $\mu(\Gamma) = \rm{max} \{2- 2\mathbb{g}(\Gamma), 2-cr(\Gamma)\}$.
 
    
\end{definition}
\begin{lemma}\cite{white2001graphs}\label{crosscapofblocks}
Let $\Gamma$ be a graph with blocks $\Gamma_1, \Gamma_2, \cdots, \Gamma_k$. Then 
\begin{center}
$cr(\Gamma) =$
$\begin{cases} 
1-k+ \sum \limits_{i=1}^{k} cr(\Gamma_i),  & ~~\textit{if}~~ \Gamma~~ \textit{is orientably simple}\\
2k - \sum \limits_{i=1}^{k} \mu (\Gamma_i), & ~~\textit{otherwise.}
\end{cases}$
\end{center}
\end{lemma}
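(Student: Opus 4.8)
The plan is to recast everything in terms of the \emph{Euler genus} $\bar{\gamma}(\Gamma) := 2-\mu(\Gamma)$, which by the definition of $\mu$ equals $\min\{2\mathbb{g}(\Gamma),\,cr(\Gamma)\}$, the least Euler genus of any surface (orientable or not) carrying $\Gamma$. First I would record two elementary facts for a single connected graph: adding a crosscap inside a face of a minimum orientable embedding produces a non-orientable one, so $cr(\Gamma)\le 2\mathbb{g}(\Gamma)+1$; hence $cr(\Gamma)\in\{\bar{\gamma}(\Gamma),\,\bar{\gamma}(\Gamma)+1\}$, with $cr(\Gamma)=\bar{\gamma}(\Gamma)+1$ exactly when $\Gamma$ is orientably simple and $cr(\Gamma)=\bar{\gamma}(\Gamma)$ otherwise. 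Since blocks meet along single cut vertices, it then suffices to handle the amalgamation $\Gamma=\Gamma'\cup\Gamma''$ of two graphs sharing one vertex and to induct on the number of blocks $k$, peeling off one leaf of the block--cut-vertex tree at a time.

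The heart of the argument is additivity of the Euler genus, $\bar{\gamma}(\Gamma)=\bar{\gamma}(\Gamma')+\bar{\gamma}(\Gamma'')$. For $\le$, I would take minimum Euler-genus embeddings of $\Gamma'$ and $\Gamma''$ and form the connected sum of the two surfaces in discs adjacent to the copies of the cut vertex, splicing the two rotation systems there to obtain an embedding of $\Gamma$; Euler genus is additive under connected sum, giving the bound. The reverse inequality is the main obstacle: starting from a minimum embedding of $\Gamma$ in a surface $\mathbb{S}$, I would cut $\mathbb{S}$ along a simple closed curve through the cut vertex that separates the $\Gamma'$-edge-ends from the $\Gamma''$-edge-ends in the rotation, and cap the two boundary circles with discs, splitting $\mathbb{S}$ as a connected sum $\mathbb{S}'\#\mathbb{S}''$ carrying $\Gamma'$ and $\Gamma''$ with $\bar{\gamma}(\mathbb{S}')+\bar{\gamma}(\mathbb{S}'')\le\bar{\gamma}(\Gamma)$, forcing $\bar{\gamma}(\Gamma')+\bar{\gamma}(\Gamma'')\le\bar{\gamma}(\Gamma)$. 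Justifying the separating curve and the surgery, and tracking the Euler-characteristic bookkeeping, is where the genuine difficulty lies; this is the non-orientable counterpart of the classical Battle--Harary--Kodama--Youngs additivity of the orientable genus recorded in Lemma~\ref{genusofblocks}.

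Next I would establish the dichotomy that $\Gamma$ is orientably simple if and only if \emph{every} block is, the key point being that a connected sum is non-orientable precisely when at least one summand is. If some block, say $\Gamma'$, is not orientably simple it embeds in a non-orientable surface of Euler genus $\bar{\gamma}(\Gamma')$; connected-summing with a minimum surface for $\Gamma''$ keeps non-orientability and has Euler genus $\bar{\gamma}(\Gamma')+\bar{\gamma}(\Gamma'')=\bar{\gamma}(\Gamma)$, so $cr(\Gamma)=\bar{\gamma}(\Gamma)$ and $\Gamma$ is not orientably simple. Conversely, if both blocks are orientably simple then $\bar{\gamma}(\Gamma)=2\mathbb{g}(\Gamma')+2\mathbb{g}(\Gamma'')=2\mathbb{g}(\Gamma)$ by orientable additivity, and any non-orientable embedding of $\Gamma$ of Euler genus $\bar{\gamma}(\Gamma)$ would, after the cutting surgery, localize its non-orientability in a single block $\Gamma_i$, yielding $cr(\Gamma_i)\le\bar{\gamma}(\Gamma_i)$ and contradicting orientable simplicity of $\Gamma_i$; hence $cr(\Gamma)=\bar{\gamma}(\Gamma)+1$ and $\Gamma$ is orientably simple.

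Finally I would assemble the stated formula. Writing $\bar{\gamma}(\Gamma_i)=2-\mu(\Gamma_i)$ and using Euler-genus additivity gives $\bar{\gamma}(\Gamma)=\sum_{i=1}^{k}\bigl(2-\mu(\Gamma_i)\bigr)=2k-\sum_{i=1}^{k}\mu(\Gamma_i)$. If $\Gamma$ is not orientably simple, then by the dichotomy no block is, so $cr(\Gamma)=\bar{\gamma}(\Gamma)=2k-\sum_{i=1}^{k}\mu(\Gamma_i)$, the second case. If $\Gamma$ is orientably simple, then every block is, so $cr(\Gamma_i)=\bar{\gamma}(\Gamma_i)+1$ for each $i$ and $cr(\Gamma)=\bar{\gamma}(\Gamma)+1=\sum_{i=1}^{k}\bigl(cr(\Gamma_i)-1\bigr)+1=1-k+\sum_{i=1}^{k}cr(\Gamma_i)$, the first case. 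This matches the piecewise expression, so the main hurdle remains the cutting surgery underlying Euler-genus additivity and the localization of non-orientability.
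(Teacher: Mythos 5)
First, a point of comparison: the paper does not actually prove this lemma; it is quoted from White's book \cite{white2001graphs}, the result being the Stahl--Beineke block-additivity theorem for the nonorientable genus. So there is no in-paper argument to measure you against, and your outline is in essence the standard literature proof: recast everything via the Euler genus $\bar{\gamma}=2-\mu$, prove $\bar{\gamma}$ is additive over one-vertex amalgamations by a cut-and-cap surgery, analyse when non-orientability is realizable at minimal Euler genus, and induct along the block--cut-vertex tree. Your final bookkeeping translating additivity into the two displayed cases is correct, and you rightly flag the surgery as the genuine core; note that the separating curve you want need not exist for an arbitrary minimal embedding, since the rotation at the cut vertex may interleave the edge-ends of the two sides, so the embedding must first be normalized --- this is exactly the content of Battle--Harary--Kodama--Youngs (orientable, underlying Lemma \ref{genusofblocks}) and Stahl--Beineke (nonorientable). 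Since the paper itself only cites these facts, leaving that step to the literature is defensible.

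There is, however, one concrete gap: your dichotomy ``$\Gamma$ is orientably simple if and only if every block is'' is false under the paper's conventions, and the culprit is planar blocks. The paper sets $cr(\Gamma)=0$ for planar $\Gamma$, so a planar graph has $\mu=2=2-cr$ and is therefore \emph{not} orientably simple; but your step ``a block that is not orientably simple embeds in a non-orientable surface of Euler genus $\bar{\gamma}$'' fails for such a block, because the only surface of Euler genus $0$ is the sphere, which is orientable. Concretely, let $\Gamma$ be $K_7$ with a pendant edge: its blocks are $K_7$ (orientably simple, since $\mathbb{g}(K_7)=1$ and $cr(K_7)=3$) and $K_2$ (planar, hence not orientably simple), yet $\Gamma$ itself is orientably simple because a pendant edge can be drawn inside any face, giving $\mathbb{g}(\Gamma)=1$ and $cr(\Gamma)=3$. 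This breaks both directions of your biconditional, and your case-1 computation would then give $1-2+(3+0)=2\neq 3$. The repair is the convention actually used in \cite{white2001graphs}: nonorientable surfaces have genus at least $1$, so planar graphs are assigned crosscap $1$, making them orientably simple with $cr=\bar{\gamma}+1$; under that convention your dichotomy and your case-1 derivation go through verbatim. So you should either adopt White's convention explicitly or treat planar blocks separately (as transcribed, the lemma itself carries the same caveat; this is harmless in the present paper, which only ever invokes the second case, for joins whose blocks are complete graphs such as $K_5$, where planar blocks contribute $2-\mu(\Gamma_i)=0$ and the formula remains valid).
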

We shall use the following remark explicitly without referring to it.
\begin{remark}\label{triangularface}
For a simple graph $\Gamma$, every face has atleast three boundary edges and every edge is a boundary of two faces, that is, $2e \geq 3f$. Moreover, the equality holds if and only if $\Gamma$ has a triangular embedding. 
\end{remark}

A ring $R$ is called \emph{local} if it has a unique maximal ideal $\mathcal{M}$ and we abbreviate this by $(R, \mathcal{M})$. In addition, for $x \in R$, $(x)$ denotes the ideal generated by $x$. An ideal of a ring $R$ is said to be a \emph{maximal principal ideal} if it is a maximal among all the principal ideals of $R$. The set of zero-divisors and the set of units of the ring $R$ are denoted by $Z(R)$ and $U(R)$, respectively. The set of all nonzero elements of $R$ is denoted by $R^*$. Also, the field with $q$ elements is denoted by $\mathbb{F}_q$. For other basic definitions of ring theory, we refer the reader to \cite{atiyah1994introduction}.  
Let $R$ be a non-local finite commutative ring. By the structural theorem, $R$ is uniquely (up to isomorphism) a finite direct product of local rings $R_i$ that is $R \cong R_1 \times R_2 \times \cdots \times R_n$, where $n \geq 2$. 
The following remark is useful for later use.
\begin{remark}\label{localringcardinality}
Let $R$ be a finite local ring and $p$ be a prime number. Then $|R| = p^{\alpha}$, for some positive integer $\alpha$.
\end{remark}
The \emph{union} of two graphs $\Gamma_1$ and $\Gamma_2$, denoted by $\Gamma_1 \cup \Gamma_2$, is the graph with $V(\Gamma_1 \cup \Gamma_2) = V(\Gamma_1) \cup V(\Gamma_2)$ and $E(\Gamma_1 \cup \Gamma_2) = E(\Gamma_1) \cup E(\Gamma_2)$. The \emph{join} of $\Gamma_1$ and  $\Gamma_2$, denoted by $\Gamma_1 \vee \Gamma_2$, is the graph obtained from the union of $\Gamma_1$ and $\Gamma_2$ by adding new edges from each vertex of $\Gamma_1$ to every vertex of $\Gamma_2$. The following remark is easy to observe.
\begin{remark}\label{joinandunionoftwo graphs}
For the fields $F_1$ and $F_2$, if $R \cong F_1 \times F_2$, then we have
\begin{center}
$\Gamma_U(F_1 \times F_2) \cong K_1 \vee (K_{|F_1|-1} \bigcup K_{|F_2|-1})$.
\end{center}
\end{remark}
\section{Graph classes of $\Gamma_U(R)$}
In this section, we classify all the non-local finite commutative rings whose upper ideal relation graphs are split graphs, threshold graphs, cographs, cactus graphs and unicyclic graphs. We begin with a classification of non-local finite commutative rings $R$ such that $\Gamma_U(R)$ is a split graph in the following theorem.  

\begin{theorem}\label{splitgraphtheorem}
Let $R$ be a non-local finite commutative ring.  Then $\Gamma_U(R)$ is a split graph if and only if $R$ is isomorphic to  $\mathbb{Z}_2 \times \mathbb{Z}_2 \times \mathbb{Z}_2$ or $\mathbb{Z}_2 \times \mathbb{F}_q$.
\end{theorem}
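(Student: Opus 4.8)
The plan is to use the forbidden-subgraph characterisation of split graphs (Lemma~\ref{splitgraph}): $\Gamma_U(R)$ is split if and only if it contains no induced $C_4$, $C_5$, or $2K_2$. Writing $R \cong R_1 \times \cdots \times R_n$ with $n \geq 2$ via the structure theorem, I first record the adjacency rule. For $x = (x_1,\ldots,x_n)$ and $y = (y_1,\ldots,y_n)$, since $(x) = \prod_i (x_i)$ and a principal ideal is proper exactly when one of its coordinates is a non-unit, we have $x \sim y$ if and only if some coordinate $i$ admits a proper principal ideal $(z_i)$ containing both $(x_i)$ and $(y_i)$, the remaining coordinates being filled by units (so that the resulting $z$ is a non-unit). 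In the special case that every $R_i$ is a field this collapses to the clean rule $x \sim y \iff \operatorname{supp}(x) \cup \operatorname{supp}(y) \neq \{1,\ldots,n\}$, where $\operatorname{supp}$ records the nonzero coordinates.

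For the ``if'' direction I would verify the two families directly. For $\mathbb{Z}_2 \times \mathbb{F}_q$, Remark~\ref{joinandunionoftwo graphs} gives $\Gamma_U \cong K_1 \vee (K_1 \cup K_{q-1})$; taking the clique to be the apex together with the $K_{q-1}$ (a $K_q$) and the independent set to be the single remaining vertex exhibits a split partition. For $\mathbb{Z}_2^3$ the support rule shows that $\Gamma_U$ consists of a universal vertex $0$, the three weight-one elements forming a triangle, and the three weight-two elements forming an independent set; the clique $\{0\}$ together with the weight-one elements (a $K_4$) and the weight-two independent set is a split partition.

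The ``only if'' direction carries the work, and I would organise it around one reusable construction. The key step is: \emph{if some factor $R_i$ is not a field, then $\Gamma_U(R)$ contains an induced $2K_2$.} Grouping the remaining factors into a ring $S$ with $0_S \neq 1_S$, and using that a non-field local ring has $|R_i|\ge 4$ hence at least two units $u_1, u_2$ and a nonzero non-unit $m$, I would exhibit the four vertices $(u_1,0_S),\,(u_2,0_S),\,(m,1_S),\,(0,1_S)$: the first two lie under the proper principal ideal $R_i \times 0$ (common bound $(1,0_S)$), the last two lie under $(m)\times S$ (common bound $(m,1_S)$), while every cross pair would force a common principal upper bound that is a unit, so no cross edge occurs. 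This forces every factor to be a field, and I then dispatch the all-fields case with the support rule. For $n \geq 4$, the four supports $\{1,2\}\cup E,\ \{1,3\}\cup E,\ \{3,4\}\cup E,\ \{2,4\}\cup E$ with $E=\{5,\ldots,n\}$ induce a $C_4$. For $n=3$, assuming without loss of generality $|F_1|\ge 3$, two elements of support $\{1,2\}$ and two of support $\{1,3\}$ induce a $2K_2$, leaving only $\mathbb{Z}_2^3$. For $n=2$, if both fields have at least three elements then two elements of support $\{1\}$ and two of support $\{2\}$ induce a $2K_2$, leaving only $\mathbb{Z}_2 \times \mathbb{F}_q$.

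The main obstacle is the non-field case: the clean support picture breaks down once a factor has a nontrivial maximal ideal, so one must reason carefully about which proper principal ideals can serve as common upper bounds. The construction above is designed to sidestep this by using only units and a single nonzero non-unit of the offending factor, keeping every adjacency and non-adjacency check elementary. A secondary point needing care is confirming that the four exhibited vertices are always distinct genuine non-units, which follows from $|R_i|\ge 4$ and $0_S\neq 1_S$.
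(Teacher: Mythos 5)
Your proposal is correct, and while it shares the paper's backbone --- the forbidden-subgraph characterisation of Lemma~\ref{splitgraph} together with explicit four-vertex gadgets, and a converse verification that coincides with the paper's (the same clique/independent partition for $\mathbb{Z}_2 \times \mathbb{Z}_2 \times \mathbb{Z}_2$, and Remark~\ref{joinandunionoftwo graphs} for $\mathbb{Z}_2 \times \mathbb{F}_q$) --- the forward direction is organised along a genuinely different route. The paper cases first on the number $n$ of local factors and then on their cardinalities: it exhibits $2K_2$ directly for $n \geq 4$, forces $|R_i| = 2$ for all $i$ when $n = 3$ using gadgets built only from units, $0$ and $1$ (which are insensitive to whether the factors are fields), and invokes a zero-divisor $z \in Z(R_2^*)$ only in the $n = 2$ case to exclude non-field factors. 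You instead prove one reusable reduction: any non-field local factor $R_i$, after grouping the complementary factors into $S$, yields the induced $2K_2$ on $(u_1, 0_S), (u_2, 0_S), (m, 1_S), (0, 1_S)$ --- this is essentially the paper's $n=2$ zero-divisor gadget promoted to arbitrary $n$ --- which reduces everything to finite products of fields, where your support criterion ($x \sim y$ iff $\operatorname{supp}(x) \cup \operatorname{supp}(y) \neq \{1, \dots, n\}$) makes the residual analysis purely combinatorial: a $C_4$ for $n \geq 4$ (where the paper uses $2K_2$) and $2K_2$ for $n = 3$ and $n = 2$. Your details check out: the gadget vertices are distinct non-units (a non-field local ring has the two units $1$ and $1+m$ and a nonzero $m \in \mathcal{M}$), the cross pairs are non-adjacent because a unit in a coordinate forces any principal upper bound there to be all of $R_i$, and the diagonals of your $C_4$ fail adjacency since the support unions cover everything. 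What your organisation buys is a single uniform treatment of non-field factors and a cleanly isolated adjacency rule for products of fields; what the paper's buys is a shorter case tree, since its $n \geq 4$ and $n = 3$ gadgets never need the field/non-field distinction, so the zero-divisor construction appears only once.
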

\begin{proof}
First suppose that $\Gamma_U(R)$ is a split graph. Since $R$ is a non-local commutative ring, we have $R \cong R_1 \times R_2 \times \cdots \times R_n$, where each $R_i$ is a local ring and $n \geq 2$. If $n \geq 4$, then $\Gamma_U(R)$ has a subgraph induced by  $u_1 = (1, 1, 0, 1, \cdots, 1)$, $u_2 = (0, 1, 0, 1, \cdots, 1)$ and $v_1 = (1, 0, 1, 1, \cdots, 1)$, $v_2 = (1, 0, 1, 0, 1, \cdots, 1)$ which is isomorphic to $2K_2$; a contradiction. Thus, either $R \cong R_1 \times R_2 \times R_3$ or $R \cong R_1 \times R_2$. Let $a_1, a_2 \in U(R_1)$, $b_1, b_2 \in U(R_2)$ and $c_1, c_2 \in U(R_3)$. First suppose that $R \cong R_1 \times R_2 \times R_3$. If $|R_i| \geq 3$ for every $i \in \{1, 2, 3\}$, then notice that $u_1 = (a_1, 0, c_1)$, $u_2 = (a_2, 0, c_2)$ and $v_1 = (0, b_1, c_1)$, $v_2 = (0, b_2, c_2)$ induces a subgraph of $\Gamma_U(R)$ which is isomorphic to $2K_2$. Without loss of generality, assume that $|R_1| = 2$ and $|R_2| = 3= |R_3|$. Then for the set $X = \{(0, b_1, c_1), (0, b_2, c_2)  (1, b_1, 0), (1, b_2, 0)\}$ we have $\Gamma_U(X) \cong 2K_2$; a contradiction. Therefore, both $R_2$ and $R_3$ can not have cardinality three. We may now suppose that $|R_1| = 2 = |R_2|$ and $|R_3| = 3$. The set $X' = \{(0, 1, c_1), (0, 1, c_2), (1, 0, c_1), (1, 0, c_2)\}$ induces $2K_2$ as a subgraph of $\Gamma_U(R)$ which is not possible. Consequently, $R \cong \mathbb{Z}_2 \times \mathbb{Z}_2 \times \mathbb{Z}_2$.

Now, suppose that $R \cong R_1 \times R_2$. If $|R_1| \geq 3$ and $|R_2| \geq 3$ then the set of vertices $\{(a_1, 0), (a_2, 0), (0, b_1), (0, b_2)\}$ induces a subgraph isomorphic to $2K_2$; a contradiction. Without loss of generality, assume that $|R_1| = 2$ and $|R_2| \geq 3$. If $R_2$ is not a field then there exists $z \in Z(R_2^*)$. Consequently, the subgraph induced by $X'' = \{(0, b_1), (0, b_2), (1, 0), (1, z)\}$ is isomorphic to $2K_2$, which is not possible. Thus, $R \cong \mathbb{Z}_2 \times \mathbb{F}_q$.

Conversely,  suppose that $R \cong \mathbb{Z}_2 \times \mathbb{Z}_2 \times \mathbb{Z}_2$. Note that $V(\Gamma_U(R)) = \mathcal{V}_1 \cup \mathcal{V}_2$, where $\mathcal{V}_1 = \{(1, 1, 0), (1, 0, 1), (0, 1, 1)\}$ is an independent set and $\mathcal{V}_2 = \{(0, 0, 0), (1, 0, 0), (0, 1, 0), (0, 0, 1)\}$ forms a complete subgraph of $\Gamma_U(R)$. Moreover, $\mathcal{V}_1 \cap \mathcal{V}_2 = \emptyset$. Thus, by definition, $\Gamma_U(R)$ is a split graph. If $R \cong \mathbb{Z}_2 \times \mathbb{F}_q$ then note that $V(\Gamma_U(R)) = \mathcal{V}_1 \cup \mathcal{V}_2$, where $\mathcal{V}_1 = \{(1, 0)\}$ and $\mathcal{V}_2 = \{(0, b) : b \in \mathbb{F}_q\}$. Consequently, the result holds.
\end{proof}
The following theorem describes all the non-local commutative rings whose upper ideal relation graph is a threshold graph.
\begin{theorem}
Let $R$ be a non-local finite commutative ring. Then $\Gamma_U(R)$ is a threshold graph if and only if $R$  is isomorphic to $\mathbb{Z}_2 \times \mathbb{F}_q$.
\end{theorem}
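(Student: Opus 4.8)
The plan is to leverage the split-graph theorem just proved. Since every threshold graph is in particular a split graph, Theorem~\ref{splitgraphtheorem} immediately restricts the candidates: if $\Gamma_U(R)$ is a threshold graph, then $R \cong \mathbb{Z}_2 \times \mathbb{Z}_2 \times \mathbb{Z}_2$ or $R \cong \mathbb{Z}_2 \times \mathbb{F}_q$. A threshold graph is exactly a split graph that in addition forbids an induced $P_4$ (equivalently, forbids $P_4$, $C_4$, and $2K_2$, but the split-graph condition already kills $C_4$ and $2K_2$), so the only remaining task is to decide, for each of these two candidates, whether $\Gamma_U(R)$ contains an induced $P_4$.

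First I would dispose of $R \cong \mathbb{Z}_2 \times \mathbb{Z}_2 \times \mathbb{Z}_2$ by exhibiting an explicit induced $P_4$. Using the vertex decomposition from the proof of Theorem~\ref{splitgraphtheorem}, the complete part is $\mathcal{V}_2 = \{(0,0,0),(1,0,0),(0,1,0),(0,0,1)\}$ and the independent part is $\mathcal{V}_1 = \{(1,1,0),(1,0,1),(0,1,1)\}$. The idea is to pick two non-adjacent vertices from $\mathcal{V}_1$ together with connecting vertices so as to form a path on four vertices with no chords. For instance, I expect a chain such as $(1,1,0) \sim (1,0,0) \sim (0,0,1) \sim (0,1,1)$ to work: here $(1,1,0)$ and $(0,1,1)$ are independent vertices, the two middle vertices are adjacent (both lie in the complete part), and one checks that the two ``diagonal'' pairs $(1,1,0)\nsim(0,0,1)$ and $(1,0,0)\nsim(0,1,1)$ are non-adjacent, while $(1,1,0)\nsim(0,1,1)$; verifying adjacency simply amounts to testing, for each pair $x,y$, whether there is a non-unit $z$ with $(x),(y) \subseteq (z)$, i.e.\ whether the coordinatewise structure admits a common non-unit upper bound. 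Once such a $P_4$ is confirmed, $\mathbb{Z}_2 \times \mathbb{Z}_2 \times \mathbb{Z}_2$ is excluded.

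It remains to verify the converse: that for $R \cong \mathbb{Z}_2 \times \mathbb{F}_q$ the graph $\Gamma_U(R)$ is genuinely a threshold graph. By Remark~\ref{joinandunionoftwo graphs}, $\Gamma_U(\mathbb{Z}_2 \times \mathbb{F}_q) \cong K_1 \vee (K_1 \cup K_{q-1})$, since $|\mathbb{Z}_2|-1 = 1$. Here the single apex vertex (coming from $(0,0)$) is joined to everything, and removing it leaves $K_1 \cup K_{q-1}$, a disjoint union of cliques. Threshold graphs are precisely those built from a single vertex by repeatedly adding isolated or dominating vertices; equivalently one can simply check directly that $K_1 \vee (K_1 \cup K_{q-1})$ contains no induced $P_4$, $C_4$, or $2K_2$. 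The absence of $2K_2$ and $C_4$ is inherited from the split structure of Theorem~\ref{splitgraphtheorem}, and the absence of an induced $P_4$ follows because any four vertices either include the apex (which is then adjacent to all three others, precluding a path) or lie entirely in $K_1 \cup K_{q-1}$, whose induced subgraphs are disjoint unions of cliques and hence $P_4$-free.

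The main obstacle, such as it is, is purely bookkeeping: the only real content is the explicit adjacency computation exhibiting the induced $P_4$ in the $\mathbb{Z}_2^3$ case, and care must be taken to confirm both that the chosen four vertices form a path and that no chord edge is present. Because adjacency in $\Gamma_U(R)$ is determined by the existence of a common non-unit upper bound in the poset of principal ideals, I would organize the verification by noting that in $\mathbb{Z}_2 \times \mathbb{Z}_2 \times \mathbb{Z}_2$ every principal ideal is determined by its support (the set of nonzero coordinates), so $(x) \subseteq (z)$ reduces to a containment of supports; this reduces the entire check to a short combinatorial comparison of support sets and makes the forbidden-subgraph test routine.
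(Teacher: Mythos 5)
Your proposal is correct and follows essentially the same route as the paper: reduce via Theorem~\ref{splitgraphtheorem} to the two split candidates, rule out $\mathbb{Z}_2 \times \mathbb{Z}_2 \times \mathbb{Z}_2$ by an explicit induced $P_4$ (your path $(1,1,0) \sim (1,0,0) \sim (0,0,1) \sim (0,1,1)$ checks out under the support criterion, while the paper uses $(1,0,1) \sim (1,0,0) \sim (0,1,0) \sim (0,1,1)$), and handle the converse via Remark~\ref{joinandunionoftwo graphs}, which gives $\Gamma_U(\mathbb{Z}_2 \times \mathbb{F}_q) \cong K_1 \vee (K_1 \cup K_{q-1})$. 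The only differences are cosmetic: a different but equally valid choice of $P_4$, and a more detailed verification that $K_1 \vee (K_1 \cup K_{q-1})$ is threshold, which the paper leaves implicit.
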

\begin{proof}
Since $R$ is a non-local finite commutative ring, we have $R \cong R_1 \times R_2 \times \cdots \times R_n$, where each $R_i$ is a local ring and $n \geq 2$. Suppose that $\Gamma_U(R)$ is a threshold graph. Then $\Gamma_U(R)$ is a split graph also. 
 By Theorem \ref{splitgraphtheorem},  either $R \cong \mathbb{Z}_2 \times \mathbb{Z}_2 \times \mathbb{Z}_2$ or $R \cong \mathbb{Z}_2 \times \mathbb{F}_q$. If $R \cong \mathbb{Z}_2 \times \mathbb{Z}_2 \times \mathbb{Z}_2$, then there exists an induced subgraph $\Gamma_U(X)$, where\\ $X = \{(1, 0, 1), (1, 0, 0), (0, 1, 0), (0, 1, 1)\}$, which is isomorphic to $P_4$ which is not possible. Consequently, $R \cong \mathbb{Z}_2 \times \mathbb{F}_q$.
 
Conversely, if $R \cong \mathbb{Z}_2 \times \mathbb{F}_q$, then by Remark \ref{joinandunionoftwo graphs}, we obtain $\Gamma_U(R) \cong K_1 \vee (K_1 \cup K_{|\mathbb{F}_q|-1})$. It follows that $\Gamma_U(R)$ is a threshold graph.
\end{proof}

Now in the following theorem, we characterize all the non-local finite commutative rings whose upper ideal relation graphs are cographs i.e. $P_4$-free.  

\begin{theorem}
Let $R$ be a non-local finite commutative ring such that $R \cong R_1 \times R_2 \times \cdots \times R_n$, $(n \geq 2)$ and each $(R_i, \mathcal{M}_i)$ is a local ring. Then $\Gamma_U(R)$ is a cograph  if and only if  $R \cong R_1 \times R_2$ and $\mathcal{M}_1, \mathcal{M}_2$ are the maximal principal ideals.
\end{theorem}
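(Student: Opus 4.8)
The plan is to base everything on a coordinatewise description of adjacency and then split the argument into necessity and sufficiency. Writing $R \cong R_1\times\cdots\times R_n$, a principal ideal factors as $(x)=(x_1)\times\cdots\times(x_n)$, so $(x)\subseteq(z)$ holds exactly when $(x_i)\subseteq(z_i)$ for every $i$, while $z=(z_1,\dots,z_n)$ is a non-unit precisely when some $z_i\in\mathcal{M}_i$. Let me abbreviate by $N_i(x,y)$ the statement that $(x_i)$ and $(y_i)$ have a common \emph{non-unit} principal upper bound in $R_i$. Choosing a witnessing $z_i$ in one coordinate and units elsewhere, I obtain the working criterion that two distinct vertices satisfy $x\sim y$ if and only if $N_i(x,y)$ holds for at least one $i$. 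The one observation that ties the criterion to the hypothesis is: for a finite local ring $(R_i,\mathcal{M}_i)$, the ideal $\mathcal{M}_i$ is a maximal principal ideal if and only if $\mathcal{M}_i$ is itself principal, and in that case $N_i(x,y)$ collapses to ``$x_i,y_i\in\mathcal{M}_i$'' because the generator $m_i$ of $\mathcal{M}_i$ is automatically a common upper bound.

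For necessity I would argue by contraposition and produce an induced $P_4$ whenever $R$ is not of the asserted form. First suppose $n\ge 3$. Using $0$ (a non-unit) and $1$ in the first three coordinates and $1$ in all remaining coordinates, the criterion shows that two such vertices are adjacent if and only if they share a coordinate in which both entries are $0$; this is exactly the adjacency of the corresponding $0/1$ vectors in $\mathbb{Z}_2^3$. Hence the four vertices mimicking $(1,0,1),(1,0,0),(0,1,0),(0,1,1)$ (already seen in the threshold-graph argument for $\mathbb{Z}_2\times\mathbb{Z}_2\times\mathbb{Z}_2$) induce a $P_4$, so $\Gamma_U(R)$ is not a cograph. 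This rules out $n\ge 3$.

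The main obstacle is the remaining necessity case $n=2$ with $\mathcal{M}_1$ (say) non-principal, where I must build an induced $P_4$ by hand. By Nakayama's lemma, non-principality means $\dim_{R_1/\mathcal{M}_1}(\mathcal{M}_1/\mathcal{M}_1^2)\ge 2$, so I can pick $a_1,b_1\in\mathcal{M}_1$ whose images are linearly independent modulo $\mathcal{M}_1^2$. The key computation is that no non-unit $c$ can satisfy $(a_1),(b_1)\subseteq(c)$: writing $a_1=cr$, $b_1=cs$, either $r,s$ are units, forcing the two images to be proportional, or one of $r,s$ lies in $\mathcal{M}_1$, forcing an image into $\mathcal{M}_1^2$; both contradict independence. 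Thus $N_1(a_1,b_1)$ fails. Taking a unit $u_2\in R_2$, I would then verify, using the criterion together with $N_2(0,0)$ true and $N_2(0,u_2)=N_2(u_2,u_2)$ false, that the four vertices
\[
(a_1,u_2),\quad (a_1,0),\quad (b_1,0),\quad (b_1,u_2)
\]
carry exactly the edges $(a_1,u_2)\sim(a_1,0)$, $(a_1,0)\sim(b_1,0)$, $(b_1,0)\sim(b_1,u_2)$ and no others, i.e. an induced $P_4$. Symmetry handles $\mathcal{M}_2$ non-principal, completing necessity.

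For sufficiency, assume $n=2$ with $\mathcal{M}_1=(m_1)$ and $\mathcal{M}_2=(m_2)$ principal, so by the simplified criterion $x\sim y$ holds iff $x_1,y_1\in\mathcal{M}_1$ or $x_2,y_2\in\mathcal{M}_2$. Partitioning the non-units into $A$ (first entry a unit, second in $\mathcal{M}_2$), $B$ (first in $\mathcal{M}_1$, second a unit), and $C$ (both entries non-units), a short case check shows each of $A,B,C$ is a clique, that $C$ is joined completely to $A$ and to $B$, and that there are no $A$--$B$ edges. Hence $\Gamma_U(R)\cong K_{|C|}\vee\bigl(K_{|A|}\cup K_{|B|}\bigr)$, which is a cograph since the class of cographs is closed under disjoint union and join. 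I expect the coordinatewise adjacency criterion to be the one reusable ingredient across both directions, and the $\mathcal{M}_1/\mathcal{M}_1^2$ independence argument to be the only genuinely delicate step.
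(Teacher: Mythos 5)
Your proposal is correct, and its global skeleton coincides with the paper's: the same style of induced $P_4$ on $0/1$-vectors eliminates $n\geq 3$ (the paper uses $(1,0,1,\cdots,1)$, $(1,0,\cdots,0)$, $(0,1,0,\cdots,0)$, $(0,1,\cdots,1)$, yours differs only in the trailing coordinates); the $n=2$ obstruction is a $P_4$ of the identical shape $(\ast,u)\sim(\ast,0)\sim(\ast',0)\sim(\ast',u)$; and your sufficiency partition into $A$, $B$, $C$ is exactly the paper's $V_3$, $V_2$, $V_1$, giving the same conclusion $\Gamma_U(R)\cong K_{|V_1|}\vee\bigl(K_{|V_2|}\cup K_{|V_3|}\bigr)$. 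The one step you handle by a genuinely different lemma is the production of two elements of a non-principal $\mathcal{M}_1$ with no common non-unit principal upper bound. The paper stays inside the finite poset of principal ideals: if $\mathcal{M}_1$ is not maximal principal, then $R_1$ has two \emph{distinct} maximal principal ideals $(x_1)$ and $(x_2)$ (a unique maximal principal ideal $(y)$ would satisfy $\mathcal{M}_1\subseteq (y)$, forcing $\mathcal{M}_1=(y)$), and maximality immediately yields $x_1\nsim x_2$. You instead invoke Nakayama to get $\dim_{R_1/\mathcal{M}_1}\mathcal{M}_1/\mathcal{M}_1^2\geq 2$ and verify, via your dichotomy on $a_1=cr$, $b_1=cs$ (units force proportional images, a non-unit factor forces an image into $\mathcal{M}_1^2$), that independent $a_1,b_1$ admit no common non-unit divisor; this check is correct. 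The paper's route is more elementary, using only finiteness and the order structure; yours costs a standard piece of commutative algebra but yields explicit witnesses and transfers verbatim to arbitrary Noetherian local rings. Your formalized coordinatewise criterion ($x\sim y$ iff $N_i(x,y)$ for some $i$, with $N_i$ collapsing to ``$x_i,y_i\in\mathcal{M}_i$'' when $\mathcal{M}_i$ is principal) is a clean abstraction of adjacency computations the paper performs implicitly, element by element, in both directions.
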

\begin{proof}
First, suppose that  $\Gamma_U(R)$ is a cograph. If $n \geq 3$, consider the set
\begin{center}
$X = \{(1, 0, 1, \cdots, 1), (1, 0, \cdots, 0), (0, 1, 0, \cdots, 0), (0, 1, \cdots, 1)\}$.
\end{center}
Notice that $\Gamma_U(X) \cong P_4$; a contradiction. Thus,  $R \cong R_1 \times R_2$, where $(R_1, \mathcal{M}_1)$ and $(R_2, \mathcal{M}_2)$ are local rings. Now we show that both the ideals $\mathcal{M}_1$ and $\mathcal{M}_2$ are maximal principal. Without loss of generality, assume that $\mathcal{M}_1$ is not a maximal principal ideal. Then $R_1$ has atleast two maximal principal ideals, namely: $(x_1)$ and $(x_2)$. For instance, for some $y \in R_1$, if $(y)$ is the only maximal principal ideal of $R_1$ then $\mathcal{M}_1 = R_1 \setminus U(R_1) \subseteq (y)$. Consequently, $\mathcal{M}_1 = (y)$; a contradiction. Further, note that $x_1 \nsim x_2$ in $\Gamma_U(R_1)$. Moreover, $(x_1, 1) \sim (x_1, 0) \sim (x_2, 0) \sim (x_2, 1)$ is an induced subgraph which is isomorphic to $P_4$; a contradiction. Thus, both $\mathcal{M}_1$ and $\mathcal{M}_2$ must be the maximal principal ideals of $R_1$ and $R_2$, respectively.

Conversely, suppose that $R \cong R_1 \times R_2$ and $\mathcal{M}_i$ is the principal ideals of $R_i$. To prove $\Gamma_U(R)$ is a cograph, consider the sets
\begin{center}
\hspace{-0.8cm}$V_1 = \{(z_1, z_2) : z_1 \in \mathcal{M}_1, z_2 \in \mathcal{M}_2 \}$;\\
    $V_2 = \{(z_1, u_2) : z_1 \in \mathcal{M}_1, u_2 \in R_2 \setminus {\mathcal{M}_2}\}$;\\
     $V_3 = \{(u_1, z_2) : u_1 \in R_1 \setminus {\mathcal{M}_1}, z_2 \in \mathcal{M}_2 \}$.
\end{center}
Observe that  $V_1, V_2$ and  $V_3$ forms a partition of $V(\Gamma_U(R))$. Since $\mathcal{M}_1$ is the principal ideal and $\mathcal{M}_1 = V(\Gamma_U(R_1))$, we obtain $\Gamma_U(R_1) \cong K_{|\mathcal{M}_1|}$. Similarly, $\Gamma_U(R_2) \cong K_{|\mathcal{M}_2|}$. Consequently,  $\Gamma_U(V_i) \cong K_{|V_i|}$ for each $i \in \{1, 2, 3\}$. Let $x = (z_1, z_2) \in V_1$. If $y = (t_1, t_2) \in V_2$ then note that $(x), (y) \subseteq ((z, t_2))$, where $\mathcal{M}_1 = (z)$. It follows that $x \sim y$ in $\Gamma_U(R_1 \times R_2)$. Similarly, $x \sim y$ for every $y \in V_3$. Consequently, $x \sim y$ for every $y \in V(\Gamma_U(R_1 \times R_2))$. Further, note that for each $x \in V_2$ and $y \in V_3$, we have $x \nsim y$ in $\Gamma_U(R_1 \times R_2)$. Thus,  $\Gamma_U(R) \cong K_{|V_1|} \vee (K_{|V_2|} \bigcup K_{|V_3|})$. Hence, $\Gamma_U(R)$ is a cograph.
\end{proof}

In the next two theorems, we precisely determine all the non-local finite commutative rings whose upper ideal relation graphs are cactus and unicyclic, respectively. 
\begin{theorem}
Let $R$ be a non-local finite commutative ring. Then $\Gamma_U(R)$ is a cactus graph if and only if $R$ is isomorphic to one of the following $3$ rings: $\mathbb{Z}_2 \times \mathbb{Z}_2$, $\mathbb{Z}_2 \times \mathbb{Z}_3$, $\mathbb{Z}_3 \times \mathbb{Z}_3$.
\end{theorem}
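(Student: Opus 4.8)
The plan is to first record a structural fact that makes everything connected and reduces ``cactus'' to ``no two distinct cycles share an edge'': the zero element $\mathbf{0}=(0,\dots,0)$ is a universal vertex, since $(\mathbf 0)=\{\mathbf 0\}\subseteq (y)$ for every non-unit $y$, so $\mathbf 0\sim y$ by taking the witness $z=y$. Hence $\Gamma_U(R)$ is always connected, and to decide the cactus property I only need to detect whether two distinct cycles share an edge. Writing $R\cong R_1\times\cdots\times R_n$ with each $(R_i,\mathcal M_i)$ local and $u_i$ the unity of $R_i$, I would split the argument into $n\ge 3$ and $n=2$, and then verify the converse on the three surviving rings.

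For $n\ge 3$ I would exhibit two triangles sharing an edge. Take $a=\mathbf 0$, $x=(u_1,0,\dots,0)$, $y=(0,u_2,0,\dots,0)$ and $w=(0,0,u_3,0,\dots,0)$; these are distinct non-units, and $a$ is adjacent to all three. Moreover $x\sim y$ via the non-unit witness $(u_1,u_2,0,\dots,0)$ (its third coordinate is $0$, which uses $n\ge 3$), whose principal ideal $R_1\times R_2\times 0\times\cdots\times 0$ contains both $(x)$ and $(y)$; similarly $x\sim w$ via $(u_1,0,u_3,0,\dots,0)$. Thus $\{a,x,y\}$ and $\{a,x,w\}$ are triangles sharing the edge $a\,x$, so $\Gamma_U(R)$ is not a cactus.

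The crux is the case $n=2$, $R\cong R_1\times R_2$. Here I would use the observation that the slice $R_1\times\{0\}$ induces a complete subgraph $K_{|R_1|}$: the element $(u_1,0)$ is a non-unit with $((u_1,0))=R_1\times 0$, and every $(a,0)$ satisfies $((a,0))\subseteq R_1\times 0$, so all pairs of $R_1\times\{0\}$ are adjacent through the single witness $(u_1,0)$; symmetrically $0\times R_2$ induces $K_{|R_2|}$. Consequently, if $|R_1|\ge 4$ (or $|R_2|\ge 4$) then $\Gamma_U(R)$ contains $K_4$, which has two triangles sharing an edge and hence is not a cactus. Therefore a cactus forces $|R_1|,|R_2|\le 3$; by Remark \ref{localringcardinality} a local ring of order $2$ or $3$ is necessarily the field $\mathbb Z_2$ or $\mathbb Z_3$, so $R$ is isomorphic to $\mathbb Z_2\times\mathbb Z_2$, $\mathbb Z_2\times\mathbb Z_3$ or $\mathbb Z_3\times\mathbb Z_3$.

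For the converse I would simply identify the three graphs using Remark \ref{joinandunionoftwo graphs}: $\Gamma_U(\mathbb Z_2\times\mathbb Z_2)\cong K_1\vee(K_1\cup K_1)\cong P_3$ (a tree), $\Gamma_U(\mathbb Z_2\times\mathbb Z_3)\cong K_1\vee(K_1\cup K_2)$ (a triangle with one pendant edge), and $\Gamma_U(\mathbb Z_3\times\mathbb Z_3)\cong K_1\vee(K_2\cup K_2)$ (the bowtie of two triangles meeting at a single vertex). In each case any two distinct cycles meet in at most one vertex, so each graph is a cactus. I expect the only delicate point to be the clique-slice observation together with the bookkeeping that rules out every non-field or large local factor; once $R_i\times\{0\}\cong K_{|R_i|}$ is in hand, the reduction to orders at most $3$ and the final list are immediate.
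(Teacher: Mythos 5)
Your proposal is correct and follows essentially the same route as the paper: two edge-sharing triangles rule out $n\ge 3$, the clique on a coordinate slice (the paper writes the two triangles $(a_1,0)\sim(0,0)\sim(a_2,0)$ and $(0,0)\sim(a_3,0)\sim(a_1,0)$ explicitly rather than invoking $K_4$) rules out $|R_i|\ge 4$, and the converse is read off from Remark \ref{joinandunionoftwo graphs}. Your additional observations (universality of $\mathbf{0}$, the explicit identification of the three graphs as $P_3$, a triangle with a pendant edge, and the bowtie) are correct refinements of the paper's terser argument.
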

\begin{proof}
Let $R$ be a non-local finite commutative ring. Then $R \cong R_1 \times R_2 \times \cdots \times R_n$, where each $R_i$ is a local ring and $n \geq 2$.
First suppose that $\Gamma_U(R)$ is a cactus graph. For $n \geq 3$, the graph $\Gamma_U(R)$ has two cycles (infact, triangles)
\begin{center}
$\mathcal{C}_1: (1, 0, \cdots, 0) \sim (1, 1, 0, \cdots, 0) \sim (0, 1, 0, \cdots, 0) \sim (1, 0, \cdots,  0)$; and \\
$\mathcal{C}_2: (0, 1, 0, \cdots, 0) \sim (0, 0, 1, \cdots, 0) \sim (1, 0, \cdots, 0) \sim (0, 1, 0, \cdots, 0)$,
\end{center}
 which has a common edge $(1, 0, \cdots, 0) \sim (0, 1, 0, \cdots, 0)$; a contradiction. Thus, $R \cong R_1 \times R_2$. Without loss of generality, assume that $|R_1| \geq 4$ with $a_1, a_2, a_3 \in R_1^*$. Then the cycles $\mathcal{C}_1: (a_1, 0) \sim (0, 0) \sim (a_2, 0) \sim (a_1, 0)$ and $\mathcal{C}_2: (0, 0) \sim (a_3, 0) \sim (a_1, 0) \sim (0, 0)$ has a common edge. Therefore, $|R_i| \leq 3$ for every $i \in \{1, 2\}$. Thus, the result holds. Converse follows from Remark \ref{joinandunionoftwo graphs}.

\end{proof}

\begin{theorem}
Let $R$ be a non-local finite commutative ring. Then $\Gamma_U(R)$ is unicyclic  if and only if $R \cong \mathbb{Z}_2 \times \mathbb{Z}_3$.
\end{theorem}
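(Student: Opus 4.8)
The plan is to exploit the fact that every unicyclic graph is a cactus graph, so that the preceding characterization of cactus graphs does almost all of the work. Indeed, a unicyclic graph is connected and contains exactly one cycle, so trivially any two of its simple cycles meet in at most one vertex; hence it is a cactus graph. I would also record at the outset that $\Gamma_U(R)$ is always connected: the zero element $(0,\dots,0)$ is a non-unit, and since $((0,\dots,0)) = \{0\} \subseteq (x)$ for every non-unit $x$, the vertex $(0,\dots,0)$ is adjacent to every other vertex of $\Gamma_U(R)$. This connectivity is what makes ``unicyclic'' meaningful here.

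First I would handle the forward direction. Assume $\Gamma_U(R)$ is unicyclic. By the observation above it is a cactus graph, so the preceding theorem forces $R$ to be isomorphic to one of $\mathbb{Z}_2 \times \mathbb{Z}_2$, $\mathbb{Z}_2 \times \mathbb{Z}_3$ or $\mathbb{Z}_3 \times \mathbb{Z}_3$. It then remains only to discard two of these three candidates, which I would do by reading off each graph from Remark \ref{joinandunionoftwo graphs} and counting cycles via the cycle rank $|E(\Gamma)| - |V(\Gamma)| + \mathcal{C}$. For $R \cong \mathbb{Z}_2 \times \mathbb{Z}_2$ one obtains $\Gamma_U(R) \cong K_1 \vee (K_1 \cup K_1)$, a path on three vertices, which is a tree and so contains no cycle at all. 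For $R \cong \mathbb{Z}_3 \times \mathbb{Z}_3$ one obtains $\Gamma_U(R) \cong K_1 \vee (K_2 \cup K_2)$, that is, two triangles sharing a single vertex; here $|V| = 5$ and $|E| = 6$, so the cycle rank equals $6 - 5 + 1 = 2$ and the graph has two cycles. Neither case is unicyclic, leaving only $R \cong \mathbb{Z}_2 \times \mathbb{Z}_3$.

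For the converse I would simply analyze $\Gamma_U(\mathbb{Z}_2 \times \mathbb{Z}_3)$ directly. By Remark \ref{joinandunionoftwo graphs} we have $\Gamma_U(\mathbb{Z}_2 \times \mathbb{Z}_3) \cong K_1 \vee (K_1 \cup K_2)$, which is a triangle with a single pendant vertex attached at one of its corners. This graph is connected with $|V| = 4$ and $|E| = 4$, hence cycle rank $4 - 4 + 1 = 1$, and it visibly contains exactly one cycle, namely the triangle formed by the central vertex together with the two endpoints of the $K_2$. Therefore $\Gamma_U(\mathbb{Z}_2 \times \mathbb{Z}_3)$ is unicyclic, which establishes the equivalence.

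I do not expect a genuine obstacle in this argument: once ``unicyclic'' is recognised as a special case of ``cactus,'' the whole proof collapses to inspecting three already-understood graphs from Remark \ref{joinandunionoftwo graphs}. The only points needing a little care are confirming connectivity of $\Gamma_U(R)$ (so that the notion of unicyclic applies) and cleanly separating the acyclic case $\mathbb{Z}_2 \times \mathbb{Z}_2$ and the two-cycle case $\mathbb{Z}_3 \times \mathbb{Z}_3$ from the single-cycle case $\mathbb{Z}_2 \times \mathbb{Z}_3$, all of which follow at once from the cycle rank computation.
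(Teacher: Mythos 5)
Your proof is correct, but it follows a genuinely different route from the paper's. The paper proves the theorem from scratch by the same forbidden-configuration technique it used for the cactus theorem: for $n \geq 3$ it exhibits two explicit triangles through $(1,0,\dots,0)$, for $R \cong R_1 \times R_2$ with $|R_1|, |R_2| \geq 3$ it exhibits two triangles through $(0,0)$, for $|R_1| = 2$ and $|R_2| \geq 4$ likewise, and finally it discards $\mathbb{Z}_2 \times \mathbb{Z}_2$ because $\Gamma_U(\mathbb{Z}_2 \times \mathbb{Z}_2) \cong P_3$ is acyclic. You instead observe that every unicyclic graph is a cactus graph (vacuously so, since a graph with exactly one cycle has no two distinct simple cycles), invoke the immediately preceding cactus theorem to reduce at once to the three candidate rings $\mathbb{Z}_2 \times \mathbb{Z}_2$, $\mathbb{Z}_2 \times \mathbb{Z}_3$, $\mathbb{Z}_3 \times \mathbb{Z}_3$, and then settle each by reading the graph off Remark \ref{joinandunionoftwo graphs} and computing the cycle rank $|E| - |V| + \mathcal{C}$, which gives $0$, $1$, $2$ respectively. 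Your argument is shorter and more modular, reusing earlier work rather than re-running the case analysis; the paper's version is self-contained and makes the obstruction (two cycles sharing an edge or a vertex) explicit in every case, at the cost of repetition. Two small points are worth making airtight in your write-up: cycle rank counts independent cycles, not simple cycles, but since the simple cycles span the cycle space, rank $2$ does force at least two distinct simple cycles, so the exclusion of $\mathbb{Z}_3 \times \mathbb{Z}_3$ stands (and rank $1$ together with connectedness does give exactly one cycle, so the converse stands too); and in your connectivity observation the required non-unit witness $z$ for the adjacency of $0$ with a non-unit $x \neq 0$ is simply $z = x$, which you use implicitly and could state.
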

\begin{proof}
Let $R$ be a non-local finite commutative ring. Then $R \cong R_1 \times R_2 \times \cdots \times R_n$, where each $R_i$ is a local ring and $n \geq 2$. First suppose that $\Gamma_U(R)$ is a unicyclic graph. If $n \geq 3$, then $\Gamma_U(R)$ has two cycles $\mathcal{C}_1$ and $\mathcal{C}_2$, where
\begin{center}
$\mathcal{C}_1 : (1, 0, \cdots, 0) \sim (0, 1, 0, \cdots, 0) \sim (0, 0, 1, 0, \cdots, 0) \sim (1, 0, \cdots, 0)$; and \\
$\mathcal{C}_2 : (1, 0, \cdots, 0) \sim (1, 0, 1, \cdots, 0) \sim (0, 0, 1,0, \cdots, 0) \sim (1, 0, \cdots, 0)$.
\end{center}
Therefore, $R \cong R_1 \times R_2$.
Now suppose that $|R_i| \geq 3$ for each $i = 1, 2$  with  $a_1, a_2 \in R_1^*$ and $b_1, b_2 \in R_2^*$. Note that for the sets $X_1 = \{(a_1, 0), (0, 0), (a_2, 0)\}$ and  $ X_2 = \{(0, 0), (0, b_1), (0, b_2)\}$, we get $\Gamma_U(X_i) \cong C_3$, where  $i \in \{1, 2\}$; a contradiction to the fact that $\Gamma_U(R)$ has a unique cycle. We may now suppose that $R \cong R_1 \times R_2$ with $|R_i| \leq 2$ for some $i$. Without loss of generality, assume that $|R_1| =2$. 
If $|R_2| \geq 4$ and $b_1, b_2, b_3 \in R_2^*$, then for the set $Y_1 = \{(0, 0), (0, b_1), (0, b_2)\}$ and $Y_2 = \{(0, 0), (0, b_3), (0, b_2)\}$, we get $\Gamma_U(Y_i) \cong C_3$ for $i \in \{1, 2\}$; a contradiction. Consequently, $|R_1| = 2$ and $|R_2| \leq 3$. If $|R_2| = 2$, then by Remark \ref{joinandunionoftwo graphs}, we obtain $\Gamma_U(R) \cong P_3$. It follows that $\Gamma_U(R)$ does not have a cycle. Thus, $|R_2| = 3$ and so $R \cong \mathbb{Z}_2 \times \mathbb{Z}_3$. Converse is straightforward from Remark \ref{joinandunionoftwo graphs}.
\end{proof}

\section{Embedding of $\Gamma_U(R)$ on surfaces}
In this section, we study the embedding of the upper ideal relation graph on a surface without edge crossing. We begin with the investigation of an embedding of $\Gamma_U(R)$ on a plane.
\subsection{Planarity of $\Gamma_U(R)$}
In this subsection, we classify all the non-local finite commutative rings for which the graph $\Gamma_U(R)$ is outerplanar and planar, respectively. The following lemma is essential to explore the planarity of $\Gamma_U(R)$.

\begin{lemma}\label{notaplanargraph}
Let $R$ be a non-local finite commutative ring such that  $R \cong R_1 \times R_2 \times \cdots \times R_n$ for $n \geq 4$. Then the graph $\Gamma_U(R)$ is not planar.
\end{lemma}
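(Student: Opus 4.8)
```latex
\textbf{Proof proposal.}
The plan is to exhibit a nonplanar subgraph inside $\Gamma_U(R)$ whenever $R$ is a product of $n\geq 4$ local rings, by invoking the Kuratowski criterion of Theorem \ref{planar criteria}: it suffices to find a subdivision of $K_5$ or $K_{3,3}$. Since every $R_i$ is local with unity, each $R_i$ contains the unit $1$ and the non-unit $0$, so I can freely use vectors built from $0$'s and $1$'s; these are genuine non-unit elements of $R$ (any tuple with at least one zero coordinate is a zero-divisor, hence a non-unit vertex of $\Gamma_U(R)$). The key adjacency observation I would record first is the following: two non-unit vertices $x,y$ are adjacent in $\Gamma_U(R)$ as soon as there is a non-unit $z$ with $(x),(y)\subseteq(z)$, and a convenient sufficient condition is that $x$ and $y$ share a common coordinate where both are $0$ --- then the tuple $z$ which is $0$ in that coordinate and $1$ elsewhere is a non-unit with $(x),(y)\subseteq(z)$. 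More generally, if the supports (the sets of nonzero coordinates) of $x$ and $y$ are both contained in a proper subset $S\subsetneq\{1,\dots,n\}$, the indicator tuple $z=\mathbf{1}_S$ is a non-unit dominating both.

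With $n=4$ it is cleanest to work in the ``skeleton'' $\mathbb{Z}_2^4$ picture: I would select six vertices of the form $(\ast,\ast,\ast,\ast)$ with entries in $\{0,1\}$ and check that the induced adjacencies contain a $K_{3,3}$ (or a $K_5$ subdivision). A natural choice is to take vertices whose supports all avoid some fixed coordinate on one side of the bipartition and whose supports all avoid a different fixed coordinate on the other side, so that every cross pair has a common zero coordinate and is therefore adjacent, while I arrange the within-part pairs to be irrelevant to the bipartite subgraph. Concretely I would try partition classes built around the elements $(1,1,0,0),(1,0,1,0),(1,0,0,1)$ against $(0,1,1,0),(0,1,0,1),(0,0,1,1)$ and verify each of the nine cross-adjacencies via the common-zero criterion, producing an induced (hence subgraph) $K_{3,3}$.

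For general $n\geq 4$ I would reduce to the case $n=4$ rather than redo the combinatorics: writing $R\cong R_1\times R_2\times R_3\times (R_4\times\cdots\times R_n)$ exhibits $R$ as a product of (at least) four local factors, and the six chosen tuples can be taken to have $1$ in every coordinate beyond the fourth block. Padding with $1$'s does not change which coordinates are zero, so the same common-zero witnesses $z$ still work and the same $K_{3,3}$ appears as a subgraph of $\Gamma_U(R)$. By Theorem \ref{planar criteria}, $\Gamma_U(R)$ is therefore not planar. The main obstacle I anticipate is purely bookkeeping: making sure that (i) each of the six tuples is genuinely a non-unit vertex, (ii) each cross pair really is adjacent via an honest non-unit $z$, and (iii) the chosen vertices are pairwise distinct --- none of these is deep, but the adjacency verification must be done carefully because adjacency in $\Gamma_U(R)$ is mediated by the existence of a dominating principal ideal rather than by direct divisibility, so I would state the common-support lemma explicitly up front and then appeal to it for all nine edges at once.
```
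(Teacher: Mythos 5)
Your overall plan---find a Kuratowski subgraph and invoke Theorem \ref{planar criteria}---is the same as the paper's, and the ``common-support lemma'' you state is correct; in fact, for $0$--$1$ tuples $x,y$ adjacency in $\Gamma_U(R)$ holds \emph{exactly} when $\mathrm{supp}(x)\cup\mathrm{supp}(y)$ is a proper subset of $\{1,\dots,n\}$ (if $x_i=1$ then $(x)\subseteq(z)$ forces $z_i$ to be a unit, so a non-unit witness $z$ needs a coordinate outside both supports). But your concrete $K_{3,3}$ does not survive this test, and here is the genuine gap: in your bipartition, the three cross pairs $(1,1,0,0)$--$(0,0,1,1)$, $(1,0,1,0)$--$(0,1,0,1)$ and $(1,0,0,1)$--$(0,1,1,0)$ have \emph{complementary} supports, so they share no common zero coordinate; worse, they are provably non-adjacent, since $(x)+(y)=R$ forces any $z$ with $(x),(y)\subseteq(z)$ to satisfy $(z)=R$, i.e.\ $z$ is a unit. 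Padding the coordinates beyond the fourth block with $1$'s only enlarges the supports, so this failure persists for all $n\geq 4$. Your six vertices therefore span only the six edges of a hexagon $C_6$ (namely $(1,1,0,0)\sim(0,1,1,0)\sim(1,0,0,1)\sim(0,1,0,1)\sim(1,0,1,0)\sim(0,0,1,1)\sim(1,1,0,0)$), which is planar: the promised ``verification of all nine cross-adjacencies via the common-zero criterion'' cannot be carried out.

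The repair is exactly the paper's one-line proof, and it is an application of your own lemma with supports made small rather than spread out: take $X=\{(0,0,\dots,0),\,(1,0,\dots,0),\,(0,1,0,\dots,0),\,(0,0,1,0,\dots,0),\,(0,0,0,1,0,\dots,0)\}$. Every vertex of $X$ has support of size at most one, so any two supports jointly miss some coordinate (this is precisely where $n\geq 4$ is used), and hence all ten pairs are adjacent, giving $\Gamma_U(X)\cong K_5$ and non-planarity by Theorem \ref{planar criteria}. If you prefer to keep a $K_{3,3}$, you must choose parts whose pairwise support unions are all proper, e.g.\ $\{(1,0,0,0,\dots,0),\,(0,1,0,0,\dots,0),\,(1,1,0,0,\dots,0)\}$ against $\{(0,0,\dots,0),\,(0,0,1,0,\dots,0),\,(0,0,0,1,0,\dots,0)\}$: every cross union has size at most three, hence misses a coordinate, and all nine edges exist.
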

\begin{proof}
Consider the set $X = \{(0,0, \cdots, 0), (1, 0, \cdots, 0), (0, 1, 0, \cdots, 0), (0, 0, 1, 0, \cdots, 0), (0, 0, 0, 1, 0, \cdots, 0)\}$. Note that $\Gamma_U(X) \cong K_5$. Therefore, by Theorem \ref{planar criteria}, $\Gamma_U(R)$ is not a planar graph.
\end{proof}
Now we characterize all the non-local finite commutative rings whose upper ideal relation graph is outerplanar.
\begin{theorem}\label{outerplanar}
Let $R$ be a non-local finite commutative ring. Then $\Gamma_U(R)$ is outerplanar if and only if $R$ is isomorphic to one of the following $3$ rings:
\begin{center}
    $\mathbb{Z}_2 \times \mathbb{Z}_2$, $\mathbb{Z}_2 \times \mathbb{Z}_3$, $\mathbb{Z}_3 \times \mathbb{Z}_3$.
\end{center}
\end{theorem}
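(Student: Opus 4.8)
The plan is to apply the forbidden-subgraph characterization of outerplanarity, Theorem \ref{outerplanar criteria}: $\Gamma_U(R)$ is outerplanar exactly when it contains no subdivision of $K_4$ or $K_{2,3}$. Write $R \cong R_1 \times \cdots \times R_n$ with each $R_i$ local and $n \geq 2$. For the forward direction, assume $\Gamma_U(R)$ is outerplanar. Since every outerplanar graph is planar, Lemma \ref{notaplanargraph} immediately eliminates the case $n \geq 4$. To rule out $n = 3$, I would exhibit an induced $K_4$: take the four non-unit vertices $(0,0,0), (1,0,0), (0,1,0), (0,0,1)$. The vertex $(0,0,0)$ is adjacent to all others since its ideal lies in every ideal, and any two of the remaining three, say $(1,0,0)$ and $(0,1,0)$, have their principal ideals contained in the ideal generated by the non-unit $(1,1,0)$ (and symmetrically for the other pairs using $(1,0,1)$ and $(0,1,1)$). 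Hence these four vertices induce $K_4$, so $\Gamma_U(R)$ is not outerplanar and we must have $n = 2$.

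With $R \cong R_1 \times R_2$, the key structural observation — and the step doing the real work — is that $R_1 \times \{0\}$ is a clique of $\Gamma_U(R)$. Indeed, for any $a, b \in R_1$ the principal ideals $(a) \times \{0\}$ and $(b) \times \{0\}$ are both contained in $((1,0)) = R_1 \times \{0\}$, and $(1,0)$ is a non-unit; thus all such vertices are pairwise adjacent. Consequently, if $|R_1| \geq 4$ this clique contains $K_4$, contradicting outerplanarity, and the same argument applies to $R_2$. Therefore $|R_1|, |R_2| \leq 3$. Since each $|R_i|$ is a prime power by Remark \ref{localringcardinality}, we obtain $|R_i| \in \{2,3\}$, so each $R_i$ is a field, namely $\mathbb{Z}_2$ or $\mathbb{Z}_3$. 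This leaves precisely the three rings $\mathbb{Z}_2 \times \mathbb{Z}_2$, $\mathbb{Z}_2 \times \mathbb{Z}_3$, and $\mathbb{Z}_3 \times \mathbb{Z}_3$.

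For the converse, I would compute each graph directly from Remark \ref{joinandunionoftwo graphs}: one gets $\Gamma_U(\mathbb{Z}_2 \times \mathbb{Z}_2) \cong K_1 \vee (K_1 \cup K_1) \cong P_3$, then $\Gamma_U(\mathbb{Z}_2 \times \mathbb{Z}_3) \cong K_1 \vee (K_1 \cup K_2)$ (a triangle with one pendant edge), and $\Gamma_U(\mathbb{Z}_3 \times \mathbb{Z}_3) \cong K_1 \vee (K_2 \cup K_2)$ (the bowtie of two triangles sharing a vertex). Each of these has at most one vertex of degree $\geq 3$. Since any subdivision of $K_4$ has four vertices of degree $\geq 3$ and any subdivision of $K_{2,3}$ has two such vertices, none of the three base graphs can contain either subdivision, so by Theorem \ref{outerplanar criteria} all three are outerplanar.

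I expect the only delicate points to be routine: verifying the adjacencies that produce the $K_4$ when $n=3$ (which rests on checking that the chosen $z$'s are non-units) and confirming the degree count for the three base graphs in the converse. The main conceptual obstacle is really the clique observation for $R_1 \times \{0\}$, which is what converts outerplanarity into the numerical bound $|R_i| \leq 3$; everything else follows from it together with Remarks \ref{joinandunionoftwo graphs} and \ref{localringcardinality}.
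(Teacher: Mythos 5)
Your proposal is correct and follows essentially the same route as the paper's proof: reduce to $n\le 3$ via Lemma \ref{notaplanargraph}, exhibit the $K_4$ on $(0,0,0),(1,0,0),(0,1,0),(0,0,1)$ when $n=3$, use the clique $R_1\times\{0\}$ (the paper's set $X'=\{(0,0),(a_1,0),(a_2,0),(a_3,0)\}$) to force $|R_i|\le 3$ when $n=2$, and verify the converse from Remark \ref{joinandunionoftwo graphs} with Theorem \ref{outerplanar criteria}. Your only additions are cosmetic — handling $n=3$ for arbitrary cardinalities and making the converse's forbidden-subdivision check explicit via a degree count — both of which the paper leaves implicit.
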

\begin{proof}
Let $R$ be a non-local finite commutative ring. Then $R \cong R_1 \times R_2 \times \cdots \times R_n$, where each $R_i$ is a local ring and $n \geq 2$. Let $\Gamma_U(R)$ be an outerplanar graph.  By Lemma \ref{notaplanargraph}, we must have $n \leq 3$. Suppose that $R  \cong R_1 \times R_2 \times R_3$. If $|R_i| = 2$ for every $i \in \{1, 2, 3\}$, then for the set $X = \{(0, 0, 0), (1, 0, 0), (0, 1, 0), (0, 0, 1)\}$ note that $\Gamma_U(X) \cong K_4$, which is not possible. We may now suppose that $R \cong R_1 \times R_2$. Let $|R_i| \geq 4$ for some $i \in \{1, 2\}$. Without loss of generality, assume that $|R_1| = 4$ such that $R_1 = \{0, a_1, a_2, a_3\}$. Then for $X' = \{(0, 0), (a_1, 0), (a_2, 0), (a_3, 0)\}$, we have $\Gamma_U(X') \cong K_4$; again a contradiction. Consequently, $R \cong R_1 \times R_2$ with $|R_i| \leq 3$ for $i \in \{1, 2\}$. Converse holds by Theorem \ref{outerplanar criteria} and Remark \ref{joinandunionoftwo graphs}.
\end{proof}
The following theorem enumerates all the rings $R$ such that the graph $\Gamma_U(R)$ is planar.
\begin{theorem}\label{Planarupperideal}
Let $R$ be a non-local finite commutative ring. Then $\Gamma_U(R)$ is a planar graph if and only if $R$ is isomorphic to one of the following $9$ rings:
\begin{center}
    $\mathbb{Z}_2 \times \mathbb{Z}_2 \times \mathbb{Z}_2,$ $\mathbb{Z}_2 \times \mathbb{Z}_4,$ $\mathbb{Z}_2 \times \frac{\mathbb{Z}_2[x]}{(x^2)}$, $\mathbb{Z}_2 \times \mathbb{F}_4$, $\mathbb{Z}_2 \times \mathbb{Z}_2$, $\mathbb{Z}_2 \times \mathbb{Z}_3$, $\mathbb{Z}_3 \times \mathbb{Z}_3$, $\mathbb{Z}_3 \times \mathbb{F}_4$, $\mathbb{F}_4 \times \mathbb{F}_4$.
\end{center}
\end{theorem}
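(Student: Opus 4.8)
The plan is to reduce everything to the forbidden‑subgraph criterion of Theorem \ref{planar criteria} via one uniform clique‑counting device, and then certify the converse by hand. Write $R \cong R_1 \times R_2 \times \cdots \times R_n$ with each $(R_i,\mathcal{M}_i)$ local and $n \geq 2$. By Lemma \ref{notaplanargraph} I may assume $n \in \{2,3\}$. The workhorse observation is that, for each fixed coordinate $j$, the set $\{x \in R : x_j = 0\}$ induces a \emph{clique} in $\Gamma_U(R)$: any two such elements are simultaneously contained in the principal ideal generated by the non-unit $z = (1,\dots,1,0,1,\dots,1)$ having $0$ in position $j$, so $(x),(y)\subseteq (z)$. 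This clique has $\prod_{i\neq j}|R_i| = |R|/|R_j|$ vertices, and since $\mathbb{g}(K_5) \geq 1$ (Proposition \ref{genus}), planarity forces $\prod_{i\neq j}|R_i| \leq 4$ for every $j$.

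For $n = 3$ this immediately finishes the forward direction: each of the two remaining factors has order $\geq 2$, so their product is $\geq 4$, with equality only when all three factors have order exactly $2$; if any $|R_i|\geq 3$ the associated clique already has $\geq 6$ vertices. Hence $R \cong \mathbb{Z}_2 \times \mathbb{Z}_2 \times \mathbb{Z}_2$. For $n = 2$ the bound gives $|R_1|,|R_2|\leq 4$, so each factor lies in $\{\mathbb{Z}_2,\mathbb{Z}_3,\mathbb{F}_4,\mathbb{Z}_4,\mathbb{Z}_2[x]/(x^2)\}$, the local rings of order at most $4$. A second clique refinement then disposes of the non-field factors: if $R_2$ is a non-field it necessarily has order $4$ with $\mathcal{M}_2 = (t)$ principal of size $2$, and $\{(a,m) : a\in R_1,\ m\in \mathcal{M}_2\}$ induces a clique of size $2|R_1|$ (common upper bound $z=(1,t)$), forcing $|R_1| = 2$, i.e. $R_1 \cong \mathbb{Z}_2$. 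In particular two non-field factors cannot coexist, so the surviving pairs are exactly: both factors fields drawn from $\{\mathbb{Z}_2,\mathbb{Z}_3,\mathbb{F}_4\}$, or $\mathbb{Z}_2$ times an order‑$4$ non-field. Counting, this yields the six field–field products plus $\mathbb{Z}_2 \times \mathbb{Z}_4$ and $\mathbb{Z}_2 \times \mathbb{Z}_2[x]/(x^2)$, matching the list for $n=2$.

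For the converse I would argue planarity case by case. When both factors are fields, Remark \ref{joinandunionoftwo graphs} gives $\Gamma_U(R) \cong K_1 \vee (K_{|R_1|-1} \cup K_{|R_2|-1})$; since each $|R_i|-1 \leq 3$, this is a pair of copies of $K_{\leq 4}$ sharing only the apex vertex, hence planar (planar blocks glued at a cut-vertex stay planar). The three products that are \emph{not} covered by the Remark, namely $\mathbb{Z}_2\times\mathbb{Z}_4$, $\mathbb{Z}_2\times\mathbb{Z}_2[x]/(x^2)$, and $\mathbb{Z}_2\times\mathbb{Z}_2\times\mathbb{Z}_2$, I would handle by describing their explicit structure and drawing a plane embedding; for instance $\Gamma_U(\mathbb{Z}_2\times\mathbb{Z}_4)$ turns out to be two copies of $K_4$ sharing a single edge, which is plainly planar.

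I expect the forward direction to be routine once the two clique observations are in place, so the genuine work (and the main obstacle) lies in the converse: producing and verifying the explicit planar drawings for the non-field products and for $\mathbb{Z}_2^3$, where the clean join description from Remark \ref{joinandunionoftwo graphs} is unavailable and one must instead compute adjacencies directly and exhibit a crossing-free layout. Everything else follows from the clique-size bound $\prod_{i\neq j}|R_i|\leq 4$ together with Theorem \ref{planar criteria}.
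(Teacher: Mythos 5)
Your proposal is correct and follows essentially the same route as the paper's proof: reduce to $n \le 3$ via Lemma \ref{notaplanargraph}, eliminate all other candidates by locating $K_5$ subgraphs and invoking Theorem \ref{planar criteria} (your coordinate-zero cliques and the clique on $R_1 \times \mathcal{M}_2$ are precisely the sets from which the paper extracts its five-vertex witnesses, e.g.\ its $Y=\{(0,0),(a_1,0),(a_2,0),(0,z),(a_1,z)\}$), and settle the converse by Remark \ref{joinandunionoftwo graphs} for field products together with explicit embeddings for the remaining rings. Your uniform bound $\prod_{i\ne j}|R_i|\le 4$ is just tidier bookkeeping for the paper's ad hoc $K_5$ lists, and your structural identification of $\Gamma_U(\mathbb{Z}_2\times\mathbb{Z}_4)$ and $\Gamma_U(\mathbb{Z}_2\times\mathbb{Z}_2[x]/(x^2))$ as two copies of $K_4$ sharing an edge is correct and substitutes for the drawings in Figure \ref{planardrawingof24notfield}, with $\mathbb{Z}_2\times\mathbb{Z}_2\times\mathbb{Z}_2$ handled by an explicit planar drawing in both treatments (Figure \ref{planardrawingof222} in the paper).
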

\begin{proof}
 Suppose that $\Gamma_U(R)$ is a planar graph. In the similar lines of the proof of Theorem \ref{outerplanar}, we have either $R \cong R_1 \times R_2 \times R_3$ or $R \cong R_1 \times R_2$. Let $R \cong R_1 \times R_2 \times R_3$ such that $|R_i| \geq 3$ for some $i \in \{1, 2, 3\}$. Without loss of generality, assume that $|R_1| \geq 3$ with $a_1, a_2 \in R_1^*$. For the set $X = \{(0, 0, 0), (a_1, 0, 0),  (a_2, 0, 0), (0, 1, 0), (0, 0, 1)\}$, we have $\Gamma_U(X) \cong K_5$; a contradiction. Thus, $R \cong \mathbb{Z}_2 \times \mathbb{Z}_2 \times \mathbb{Z}_2$ in this case. We may now suppose that $R \cong R_1 \times R_2$. If $|R_i| \geq 5$ for some $i \in \{1, 2\}$ and $a_1, a_2, a_3, a_4 \in R_i^*$, then for $X' = \{(0, 0), (a_1, 0), (a_2, 0), (a_3, 0), (a_4, 0)\}$ note that $\Gamma_U(X') \cong K_5$ which is not possible. Thus, for $R \cong R_1 \times R_2$ we must have $|R_i| \leq 4$. Further, if $R_2$ is not a field of cardinality four, then by the table given in \cite{nowickitables}, we have either $R_2 \cong \mathbb{Z}_4$ or $R_2 \cong \frac{\mathbb{Z}_2[x]}{(x^2)}$. 
 Then there exists  $z \in Z(R_2^*)$. Then $|R_1| \neq 3, 4$. Otherwise, the set $Y = \{(0, 0), (a_1, 0), (a_2, 0), (0, z), (a_1, z)\}$, where $a_1, a_2 \in R_1^*$, induces a subgraph $\Gamma_U(Y)$ which is isomorphic to $K_5$. Consequently, $R$ is isomorphic to one of the rings: $\mathbb{Z}_2 \times \mathbb{Z}_4,$ $\mathbb{Z}_2 \times \frac{\mathbb{Z}_2[x]}{(x^2)}$, $\mathbb{Z}_2 \times \mathbb{F}_4$, $\mathbb{Z}_3 \times \mathbb{F}_4$, $\mathbb{F}_4 \times \mathbb{F}_4$, $\mathbb{Z}_2 \times \mathbb{Z}_2$, $\mathbb{Z}_2 \times \mathbb{Z}_3$, $\mathbb{Z}_3 \times \mathbb{Z}_3$. Conversely, if $R$ is isomorphic to one of the given rings then by Figures \ref{planardrawingof222}, \ref{planardrawingof24notfield} Theorem \ref{planar criteria} and Remark \ref{joinandunionoftwo graphs}, $\Gamma_U(R)$ is planar.
\begin{figure}[h!]
\centering
\includegraphics[width=0.4 \textwidth]{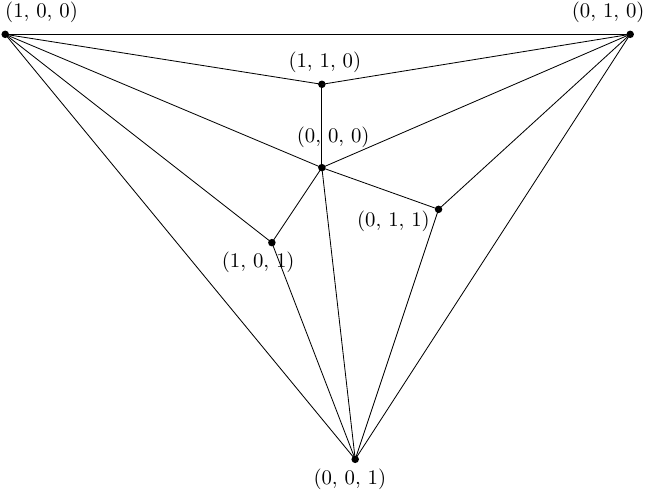}
			\caption{Planar drawing of $\Gamma_U(\mathbb{Z}_2 \times \mathbb{Z}_2 \times \mathbb{Z}_2)$}
			\label{planardrawingof222}
\end{figure}
\begin{figure}[h!]
\centering
\includegraphics[width=0.8 \textwidth]{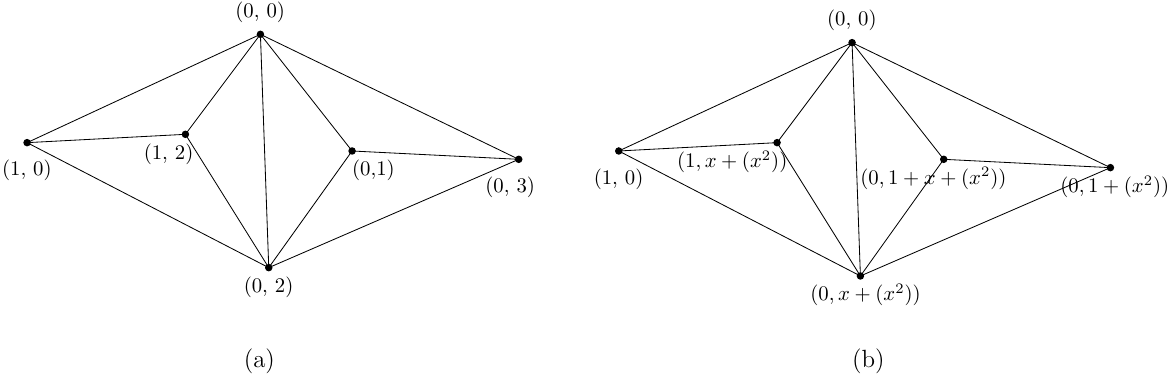}
			\caption{Planar drawing of (a)  $\Gamma_U(\mathbb{Z}_2 \times \mathbb{Z}_4)$ and (b) $\Gamma_U(\mathbb{Z}_2 \times \frac{\mathbb{Z}_2[x]}{(x^2)})$}
			\label{planardrawingof24notfield}
\end{figure}
\end{proof}
A graph $\Gamma$ which satisfies the PCP property is said to be a \emph{ring graph} if $rank(\Gamma) = frank(\Gamma)$ and $\Gamma$ does not contain a subdivision of $K_4$ as a subgraph. In the similar lines of the proof of Theorem \ref{outerplanar} and using Remark \ref{joinandunionoftwo graphs}, we have the following proposition. 
\begin{proposition}
Let $R$ be a non-local finite commutative ring. Then $\Gamma_U(R)$ is a ring graph if and only if $R$ is isomorphic to one of the following $3$ rings:
\begin{center}
    $\mathbb{Z}_2 \times \mathbb{Z}_2$, $\mathbb{Z}_2 \times \mathbb{Z}_3$, $\mathbb{Z}_3 \times \mathbb{Z}_3$.
\end{center}
\end{proposition}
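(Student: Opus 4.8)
The plan is to reuse the argument of Theorem \ref{outerplanar} almost verbatim, replacing the outerplanarity obstruction by the single requirement, built into the definition of a ring graph, that $\Gamma_U(R)$ contain no subdivision of $K_4$. Writing $R \cong R_1 \times R_2 \times \cdots \times R_n$ with each $R_i$ local and $n \geq 2$, I would first rule out all the large configurations. For $n \geq 4$, Lemma \ref{notaplanargraph} already exhibits a $K_5$. For $n = 3$, the four non-units $(0,0,0)$, $(1,0,0)$, $(0,1,0)$, $(0,0,1)$, which are present whatever the local factors are, induce a $K_4$. For $n = 2$ with $|R_i| \geq 4$ for some $i$, say $|R_1| \geq 4$, the elements $(0,0)$, $(a_1,0)$, $(a_2,0)$, $(a_3,0)$ with $a_j \in R_1^*$ induce a $K_4$. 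Since $K_4$, and a fortiori $K_5$, is itself a subdivision of $K_4$, each of these cases contradicts the ring-graph hypothesis.

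This forces $R \cong R_1 \times R_2$ with $|R_i| \leq 3$ for $i \in \{1,2\}$. As every local ring of order $2$ or $3$ is a field, namely $\mathbb{Z}_2$ or $\mathbb{Z}_3$, the only surviving possibilities are $\mathbb{Z}_2 \times \mathbb{Z}_2$, $\mathbb{Z}_2 \times \mathbb{Z}_3$ and $\mathbb{Z}_3 \times \mathbb{Z}_3$, which finishes the forward direction.

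For the converse I would use Remark \ref{joinandunionoftwo graphs} to write down $\Gamma_U(R)$ explicitly in each case and then check the three defining conditions of a ring graph directly. Here $\Gamma_U(\mathbb{Z}_2 \times \mathbb{Z}_2) \cong K_1 \vee (K_1 \cup K_1)$ is the path $P_3$; $\Gamma_U(\mathbb{Z}_2 \times \mathbb{Z}_3) \cong K_1 \vee (K_1 \cup K_2)$ is a triangle carrying one pendant edge; and $\Gamma_U(\mathbb{Z}_3 \times \mathbb{Z}_3) \cong K_1 \vee (K_2 \cup K_2)$ is a pair of triangles sharing exactly one vertex. The first graph is a tree, so $rank(\Gamma) = frank(\Gamma) = 0$ and there is nothing to verify; the second is unicyclic, so $rank(\Gamma) = frank(\Gamma) = 1$ and the PCP holds vacuously; for the third, the two triangles are the only primitive cycles and they meet in a single vertex, hence in at most one edge, giving $rank(\Gamma) = frank(\Gamma) = 2$ together with the PCP. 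Finally, none of the three graphs contains a subdivision of $K_4$, since such a subdivision requires at least four vertices of degree $\geq 3$ whereas each of these graphs has at most one. Hence all three are ring graphs.

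The argument is in essence a bookkeeping refinement of Theorem \ref{outerplanar}, so I do not anticipate a serious obstacle. The one point worth flagging is that outerplanarity forbids subdivisions of both $K_4$ and $K_{2,3}$ (Theorem \ref{outerplanar criteria}), whereas the ring-graph property forbids only subdivisions of $K_4$; the argument goes through unchanged precisely because every excluded case in the outerplanar proof was already killed by an honest $K_4$ rather than by a $K_{2,3}$. The only mildly delicate step is the converse verification of $rank(\Gamma) = frank(\Gamma)$ and the PCP for the two non-tree graphs, but since each has at most two cycles meeting in a single vertex, both are immediate.
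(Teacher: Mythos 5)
Your proof is correct and takes essentially the same approach as the paper, whose entire justification is that the result follows ``in the similar lines of the proof of Theorem \ref{outerplanar} and using Remark \ref{joinandunionoftwo graphs}'': your forward direction reuses exactly those $K_5$ and $K_4$ obstructions, and your converse checks the ring-graph conditions on the three small graphs supplied by that remark. Your explicit observation that every exclusion in the outerplanar proof was witnessed by a $K_4$ rather than a $K_{2,3}$ is precisely the point that makes the paper's one-line argument legitimate, so you have in effect written out the details the paper leaves implicit.
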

\subsection{Genus of $\Gamma_U(R)$}
In this subsection, we classify all the non-local finite commutative rings such that $\Gamma_U(R)$ has genus $1$ or $2$.

\begin{lemma}\label{genusgreaterthan1}
Let $R$ be a non-local finite commutative ring such that  $R \cong R_1 \times R_2 \times \cdots \times R_n$ for $n \geq 4$. Then $\mathbb{g}(\Gamma_U(R)) > 1$.
\end{lemma}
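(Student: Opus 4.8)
The plan is to force $\mathbb{g}(\Gamma_U(R)) \geq 2$ by exhibiting a large complete subgraph. Since genus is monotone under taking subgraphs (any $2$-cell embedding of a graph restricts to an embedding of each of its subgraphs on the same surface), it suffices to find a copy of $K_8$ inside $\Gamma_U(R)$: by Proposition \ref{genus} we have $\mathbb{g}(K_8) = \left\lceil \frac{(8-3)(8-4)}{12}\right\rceil = \left\lceil \frac{20}{12} \right\rceil = 2$, so such a subgraph immediately gives $\mathbb{g}(\Gamma_U(R)) \geq 2 > 1$. Note that the $K_5$ used in Lemma \ref{notaplanargraph} is no longer enough here, since $\mathbb{g}(K_5)=\mathbb{g}(K_6)=\mathbb{g}(K_7)=1$; the first complete graph of genus $2$ is $K_8$, which is exactly what the construction below must reach.

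The key structural observation I would use is that non-unit elements sharing a common zero coordinate are always mutually adjacent. Concretely, set
\[
S = \{ (x_1, x_2, \ldots, x_{n-1}, 0) : x_i \in R_i \} \subseteq R.
\]
Every element of $S$ has last coordinate $0 \notin U(R_n)$, hence is a non-unit of $R$ and therefore a vertex of $\Gamma_U(R)$. Taking $z = (1, 1, \ldots, 1, 0)$, which is a non-unit, one has $(z) = R_1 \times \cdots \times R_{n-1} \times \{0\}$, and for every $x \in S$ the ideal $(x) = (x_1) \times \cdots \times (x_{n-1}) \times \{0\}$ is contained in $(z)$. Thus any two distinct $x, y \in S$ satisfy $(x), (y) \subseteq (z)$, so they are adjacent; that is, $\Gamma_U(S)$ is a complete graph $K_{|S|}$.

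Finally I would count. Since each $R_i$ is a (nontrivial) local ring we have $|R_i| \geq 2$, and because $n \geq 4$ the set $S$ involves at least $n-1 \geq 3$ such factors, giving $|S| = |R_1| \cdots |R_{n-1}| \geq 2^{\,n-1} \geq 2^3 = 8$. Selecting any eight vertices of $S$ yields $K_8 \subseteq \Gamma_U(R)$, and the monotonicity argument above then forces $\mathbb{g}(\Gamma_U(R)) \geq \mathbb{g}(K_8) = 2 > 1$, as claimed. The only real content is the clique construction of the second paragraph; once one sees that "common zero coordinate'' produces a clique, the cardinality bound $2^{n-1}\geq 8$ for $n \geq 4$ does the rest, so I do not expect a genuine obstacle here beyond checking the adjacency witness $z$ and confirming the genus jump occurs precisely at $K_8$.
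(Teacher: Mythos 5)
Your proof is correct and takes essentially the same approach as the paper: both arguments exhibit a $K_8$ subgraph of $\Gamma_U(R)$ and invoke Proposition \ref{genus} to conclude $\mathbb{g}(\Gamma_U(R)) \geq \mathbb{g}(K_8) = 2 > 1$. The only (cosmetic) difference is the choice of clique: the paper lists eight explicit vertices that pairwise share a zero coordinate (so different pairs use different witnesses $z$), whereas you take all elements with last coordinate $0$, a clique of size $|R_1|\cdots|R_{n-1}| \geq 2^{n-1} \geq 8$ certified by the single witness $z = (1, \ldots, 1, 0)$.
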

\begin{proof}
Let $n \geq 4$. Then note that the vertices $x_1 = (0, 0, \cdots, 0),$  $x_2 = (1, 0, \cdots, 0),$ $x_3 = (0, 1, 0, \cdots, 0),$\\ $x_4 = (0, 0, 1, 0, \cdots, 0),$ $x_5 = (0, 0, 0, 1, 0, \cdots, 0),$ $x_6 =(1, 1, 0, 0, \cdots, 0),$ $x_7 = (1, 0, 1, 0, \cdots, 0),$ $x_8 = (1, 0, 0, 1, 0, \cdots, 0)$ induces a subgraph of $\Gamma_U(R)$ which is isomorphic to $K_8$. By Proposition \ref{genus}, we have $\mathbb{g}(\Gamma_U(R)) > 1$.
\end{proof}

All the non-local finite commutative rings with toroidal upper ideal relation graphs have been enumerated in the following theorem.

\begin{theorem}\label{genus1result}
Let $R$ be a non-local finite commutative ring. Then the genus of $\Gamma_U(R)$ is $1$ if and only if $R$ is isomorphic to one of the following $8$ rings:
\begin{center}
    $\mathbb{Z}_2 \times \mathbb{Z}_7,$ $\mathbb{Z}_3 \times \mathbb{Z}_7,$ $\mathbb{F}_4 \times \mathbb{Z}_7,$  
    $\mathbb{Z}_2 \times \mathbb{Z}_5,$ $\mathbb{Z}_3 \times \mathbb{Z}_5,$ $\mathbb{F}_4 \times \mathbb{Z}_5,$  
    $\mathbb{Z}_3 \times \mathbb{Z}_4,$ $\mathbb{Z}_3 \times \frac{\mathbb{Z}_2[x]}{(x^2)}$.
\end{center}
\end{theorem}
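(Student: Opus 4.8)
The plan is to classify by the number $n$ of local factors, using the adjacency rule that in $\Gamma_U(R_1\times\cdots\times R_n)$ two non-units $x,y$ are adjacent if and only if some coordinate $i$ carries a non-unit $z_i\in R_i$ with $(x_i),(y_i)\subseteq(z_i)$ (one simply takes $z_j=1$ for $j\neq i$). Lemma~\ref{genusgreaterthan1} already disposes of $n\ge 4$, so it suffices to treat $n\in\{2,3\}$, and Theorem~\ref{Planarupperideal} lets me discard the genus-$0$ rings at the end. The uniform device for bounding the factor sizes is that setting a block of coordinates to $0$ produces a clique: $\{(0,x_2,\dots,x_n)\}$ induces $K_{|R_2|\cdots|R_n|}$, and when $\mathcal{M}_i=(m)$ is principal, $\{x:x_i\in\mathcal{M}_i\}$ induces a clique of size $|\mathcal{M}_i|\prod_{j\neq i}|R_j|$. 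Since $\mathbb{g}(K_m)\le 1$ forces $m\le 7$ by Proposition~\ref{genus}, every such clique must have at most $7$ vertices.

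For $n=3$ the pairwise bounds $|R_i||R_j|\le 7$ force all three factors to be $\mathbb{Z}_2$ apart from at most one $\mathbb{Z}_3$ (a size-$4$ factor already violates $4\cdot 2>7$). The all-$\mathbb{Z}_2$ ring is planar, so the only candidate left is $\mathbb{Z}_3\times\mathbb{Z}_2\times\mathbb{Z}_2$; here I would compute $v=10$ and $e=31$ (the non-edges are exactly the $14$ pairs whose sets of zero coordinates are disjoint) and feed these into Lemma~\ref{eulerformulagenus} together with $2e\ge 3f$ from Remark~\ref{triangularface}, obtaining $2\mathbb{g}\ge 2-v+e/3=7/3$, whence $\mathbb{g}\ge 2$. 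Thus no $n=3$ ring has genus $1$.

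For $n=2$ I would split on whether the factors are fields. If $R\cong\mathbb{F}_q\times\mathbb{F}_p$, Remark~\ref{joinandunionoftwo graphs} gives $\Gamma_U(R)\cong K_1\vee(K_{q-1}\cup K_{p-1})$, whose blocks are precisely $K_q$ and $K_p$ meeting at the apex cut-vertex; Lemma~\ref{genusofblocks} then yields $\mathbb{g}(\Gamma_U(R))=\mathbb{g}(K_q)+\mathbb{g}(K_p)$, which equals $1$ exactly when one of $q,p$ lies in $\{5,7\}$ and the other in $\{2,3,4\}$ (the only prime powers giving block genus $1$ and $0$, respectively), producing the six field-by-field rings. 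If exactly one factor, say $R_2$, is a non-field local ring, then the clique $K_{|R_2|}$ forces $|R_2|=4$, so $R_2\in\{\mathbb{Z}_4,\mathbb{Z}_2[x]/(x^2)\}$ with $\mathcal{M}_2=(m)$ of order $2$; the clique $\{x:x_2\in\mathcal{M}_2\}$ of size $2|R_1|$ then forces $|R_1|\le 3$, so $R_1$ is the field $\mathbb{F}_2$ or $\mathbb{F}_3$ (in particular both factors cannot be non-fields). Since $\mathbb{F}_2\times R_2$ is planar by Theorem~\ref{Planarupperideal}, only $\mathbb{Z}_3\times\mathbb{Z}_4$ and $\mathbb{Z}_3\times\mathbb{Z}_2[x]/(x^2)$ survive.

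For these two rings, whose upper ideal relation graphs are isomorphic to $K_2\vee(K_2\cup K_4)\cong K_8\setminus K_{2,4}$, the graph contains $K_6$ and hence satisfies $\mathbb{g}\ge 1$ by Proposition~\ref{genus}; the genuinely hard step is to certify $\mathbb{g}\le 1$ by producing an explicit toroidal embedding of $K_2\vee(K_2\cup K_4)$, which I would exhibit via a rotation scheme or a fundamental-polygon drawing (with $v=8$, $e=20$, $f=12$). The main obstacles are precisely this embedding and, on the lower-bound side, the near-miss cases where the inequality $2e\ge 3f$ is essentially tight, so that the Euler-formula estimate must be applied carefully to separate genus $1$ from genus $2$.
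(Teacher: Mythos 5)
Your proposal is correct and follows essentially the same route as the paper's proof: genus-additivity over blocks with Ringel--Youngs for the six field-by-field rings, the Euler-formula count $v=10$, $e=31$ (your non-edge count of $14$ is right) to exclude $\mathbb{Z}_2\times\mathbb{Z}_2\times\mathbb{Z}_3$, clique obstructions of size $\geq 8$ everywhere else, and an explicit toroidal embedding for $\mathbb{Z}_3\times\mathbb{Z}_4$ and $\mathbb{Z}_3\times\frac{\mathbb{Z}_2[x]}{(x^2)}$, which the paper supplies as Figure \ref{genusone34} and you correctly defer to a rotation scheme with the right parameters $v=8$, $e=20$, $f=12$ and the correct identification $\Gamma_U(R)\cong K_2\vee(K_2\cup K_4)$. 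Your uniform clique device (zero-coordinate sets and $\{x: x_i\in\mathcal{M}_i\}$ when $\mathcal{M}_i$ is principal) is merely a tidier packaging of the paper's ad hoc $K_8$ and $K_9$ vertex lists, so the two arguments coincide in substance.
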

\begin{proof}
Let $R$ be a non-local finite commutative ring. Then $R \cong R_1 \times R_2 \times \cdots \times R_n$, where each $R_i$ is a local ring and $n \geq 2$. First suppose that $\mathbb{g}(\Gamma_U(R)) = 1$. By Lemma \ref{genusgreaterthan1}, we get $n \leq 3$. We claim that $n \neq 3$. Let, if possible  $n = 3$ that is $R \cong R_1 \times R_2 \times R_3$. If $|R_i| \geq 3$ for all $i \in \{1,2,3\}$, then the subgraph induced by $X = \{(0, 0, 0), (a_1, 0, 0), (0, b_1, 0), (a_2, 0, 0), (0, b_2, 0), (a_1, b_1, 0), (a_1, b_2, 0), (a_2, b_1, 0)\}$, where $a_1, a_2 \in R_1^*$, $b_1, b_2 \in R_2^*$ and $c_1, c_2 \in R_3^*$, is isomorphic to $K_8$. By Proposition \ref{genus}, we have $\mathbb{g}(\Gamma_U(R)) \neq 1$, a contradiction.  Consequently, $R \cong R_1 \times R_2 \times R_3$ and $|R_i| \leq 2$ for some $i$. Without loss of generality, assume that $|R_1| = 2$. Suppose $|R_i| \geq 3$ for each $i \in \{2, 3\}$. Then by using the set $Y = \{(0, 0, 0), (0, b_1, 0), (0, b_2, 0), (0, 0, c_1), (0, b_1, c_1), (0, b_2, c_1), (0, 0, c_2), (0, b_1, c_2)\}$, we obtain  $\Gamma_U(Y) \cong K_8$; again a contradiction. It implies that $|R_i| = 2$ for some $i \in \{2,3\}$. Without loss of generality, assume that $|R_2| = 2$. If $|R_3| > 3$, then observe that the vertices $x_1 = (0, 0, 0), x_2 = (0, 1, 0), x_3 = (0, 0, c_1), x_4 = (0, 0, c_2), x_5 = (0, 0, c_3), x_6 = (0, 1, c_1), x_7 = (0, 1, c_2), x_8 =(0, 1, c_3)$ induces $K_8$ as a subgraph of $\Gamma_U(R)$, which is not possible. Next, if $|R_3| =3$, then $\Gamma_U(R)$ has $10$ vertices and $31$ edges. Consequently, by Lemma \ref{eulerformulagenus}, we get $f = 21$. Thus, we get $2e < 3f$; a contradiction
 We must have $R \cong R_1 \times R_2 \times R_3$ such that $|R_i| = 2$ for every $i$. But then by Figure \ref{planardrawingof222}, we obtain that $\Gamma_U(R)$ is planar. This completes our claim and so $R \cong R_1 \times R_2$. Now first note that either $|R_1| \geq 8$ or $|R_2| \geq 8$ then there exists an induced subgraph which is isomorphic to $K_8$; a contradiction. It follows that $R \cong R_1 \times R_2$ with $|R_i| \leq 7$ for $i = 1, 2$. Now we classify the ring $R$ such that $\Gamma_U(R)$ has genus $1$ through the following cases.

\noindent\textbf{Case-1:} $|R_2| = 7$. If $|R_1| = 7$ then note that in $\Gamma_U(R)$, $v = 13$, $e = 42$ and $f = 29$. It follows that $2e < 3f$; a contradiction. We may now suppose that $|R_1| = 5$. By  Proposition \ref{genus}, Lemma \ref{genusofblocks} and Remark \ref{joinandunionoftwo graphs}, we get $\mathbb{g}(\Gamma_U(R)) > 1$ which is not possible. Thus, $|R_1| \leq 4$. If $R_1$ is not a field  of cardinality four, then either $R_1 \cong \mathbb{Z}_4$ or $R_1 \cong \frac{\mathbb{Z}_2[x]}{(x^2)}$. Consequently, there exists exactly one zero divisor $z \in Z(R_1^*)$. Then for the set $X' = \{(0,0), (0, 1), \cdots, (0, 6), (z, 0), (z, 1)\}$, we get $\Gamma_U(X') \cong K_9$; a contradiction. Thus, in this case $R$ is isomorphic to one of the three rings: $\mathbb{Z}_2 \times \mathbb{Z}_7,$ $ \mathbb{Z}_3 \times \mathbb{Z}_7,$ $\mathbb{F}_4 \times \mathbb{Z}_7$.

\noindent\textbf{Case-2:}  $|R_2| = 5$. If $|R_1| = 5$,  then by Proposition \ref{genus}, Lemma \ref{genusofblocks} and Remark \ref{joinandunionoftwo graphs}, we get $\mathbb{g}(\Gamma_U(R)) \neq 1$; a contradiction. We may now suppose that $R_1$ is not a field of cardinality four. Then note that the set $\{(0,0), (0,1),(0,2),(0,3),(0,4), (z, 0), (z, 1), (z, 2), (z, 3)\}$, where $z \in Z(R_1^*)$, induces a subgraph which is isomorphic to $K_9$; again a contradiction. Consequently, $R$ is isomorphic to one of the following $3$ rings: $\mathbb{Z}_2 \times \mathbb{Z}_5$, $\mathbb{Z}_3 \times \mathbb{Z}_5$, $\mathbb{F}_4 \times \mathbb{Z}_5$.

\noindent\textbf{Case-3:} $|R_2| = 4$. Suppose that $|R_1| = 4$. If both $R_1$ and $R_2$ are fields, then by Theorem \ref{Planarupperideal}, $\Gamma_U(R_1 \times R_2)$ is planar and so $\mathbb{g}(\Gamma_U(R)) = 0$ which is not possible. Thus, either $R_1$ or $R_2$ is not a field. Without loss of generality, assume that $R_1$ is not a field. By the argument used in Case-1, and by choosing $X'' = \{(0,0), (0, 1), (0, b_1), (0, b_2), (z, 0), (z, 1), (z, b_1), (z, b_2)\}$, where $z \in Z(R_1^*)$, $b_1, b_2 \in R_2^*$, note that $\Gamma_U(X'') \cong K_8$. Consequently, $|R_1| \leq 3$. Let $|R_1| \leq 3$ and $R_2$ be a field. Then by Theorem \ref{Planarupperideal}, $\Gamma_U(R)$ is a planar graph; a contradiction. If $R_2$ is not a field and $|R_1| = 2$ then again by Theorem \ref{Planarupperideal}, $\mathbb{g}(\Gamma_U(R)) = 0$; a contradiction. Thus, $R$ is isomorphic to $\mathbb{Z}_3 \times \mathbb{Z}_4$ or $\mathbb{Z}_3 \times \frac{\mathbb{Z}_2[x]}{(x^2)}$.

\noindent\textbf{Case-4:} $|R_2| \leq 3$. If $|R_1| \in \{2, 3\}$ then by Theorem \ref{Planarupperideal}, $\mathbb{g}(\Gamma_U(R)) = 0$; a contradiction. 
   
Conversely, if $R$ is isomorphic to $\mathbb{Z}_3 \times \mathbb{Z}_4$ or $\mathbb{Z}_3 \times \frac{\mathbb{Z}_2[x]}{(x^2)}$ then by Figure \ref{genusone34}, we have $\mathbb{g}(\Gamma_U(R)) = 1$. If $R$ is isomorphic to one of the remaining $6$ given rings, then by Proposition \ref{genus}, Lemma \ref{genusofblocks} and Remark \ref{joinandunionoftwo graphs}, we get $\mathbb{g}(\Gamma_U(R)) = 1$. 
\begin{figure}[h!]
\centering
\includegraphics[width=0.7 \textwidth]{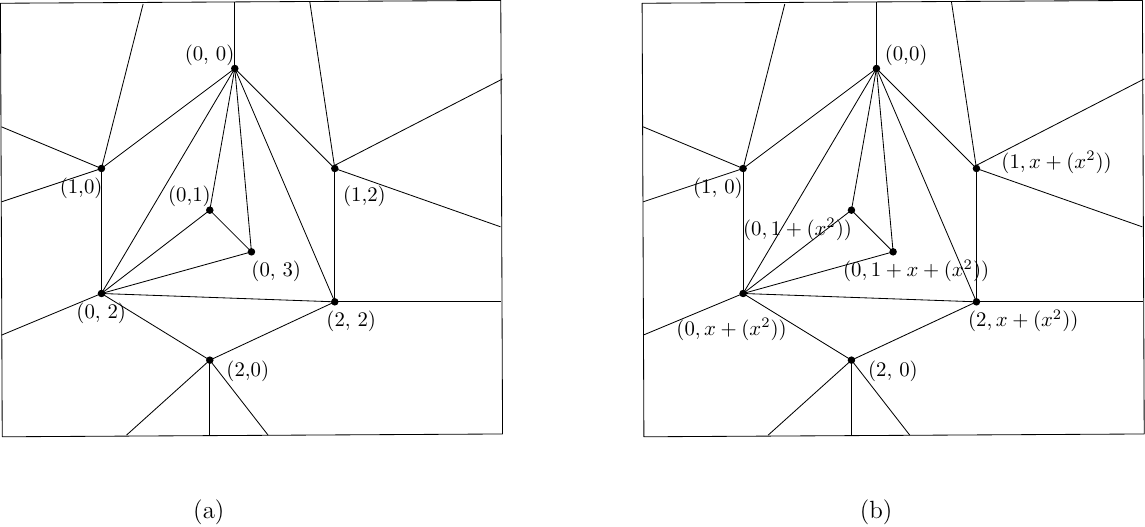}
			\caption{ Embedding of  (a) $\Gamma_U(\mathbb{Z}_3 \times \mathbb{Z}_4)$ and (b) $\Gamma_U(\mathbb{Z}_3 \times \frac{\mathbb{Z}_2[x]}{(x^2)})$ in $\mathbb{S}_1$}
			\label{genusone34}
\end{figure}
\end{proof}
We are now in the position to determine all the non-local finite commutative rings with genus two upper ideal relation graphs.
\newpage
\begin{theorem}\label{genus2result}
Let $R$ be a non-local finite commutative ring. Then $\mathbb{g}(\Gamma_U(R)) = 2$ if and only if $R$ is isomorphic to one of the following $9$ rings:
\begin{center}
$\mathbb{Z}_2 \times \mathbb{Z}_2 \times \mathbb{Z}_3$,
    $\mathbb{Z}_2 \times \mathbb{F}_8,$ $\mathbb{Z}_3 \times \mathbb{F}_8,$ $\mathbb{F}_4 \times \mathbb{F}_8,$  $\mathbb{Z}_7 \times \mathbb{Z}_7,$ $\mathbb{Z}_5 \times \mathbb{Z}_7,$ $\mathbb{Z}_5 \times \mathbb{Z}_5,$  $\mathbb{F}_4 \times \mathbb{Z}_4,$ $\mathbb{F}_4 \times \frac{\mathbb{Z}_2[x]}{(x^2)}$.
\end{center}
\end{theorem}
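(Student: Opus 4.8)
The plan is to follow the architecture of the proof of Theorem \ref{genus1result}: first bound the number of local factors, then dispatch the residual cases using explicit cliques, the block--genus formula of Lemma \ref{genusofblocks}, and Euler's formula (Lemma \ref{eulerformulagenus} together with Remark \ref{triangularface}). Throughout I write $R \cong R_1 \times \cdots \times R_n$ with each $(R_i,\mathcal{M}_i)$ local, and I would repeatedly invoke two elementary adjacency facts: any two non-units sharing a common zero coordinate are adjacent (take $z$ equal to $1$ off that coordinate and $0$ on it), and if $\mathcal{M}_i=(m)$ is principal then all vertices whose $i$-th coordinate lies in $\mathcal{M}_i$ form a clique via $z=(1,\dots,m,\dots,1)$.

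For $n\geq 4$ I would first note that the vertices all of whose coordinates lie in $\{0,1_{R_i}\}$ induce a copy of $\Gamma_U(\mathbb{Z}_2^{\,n})$, since two such vertices are adjacent exactly when they share a zero coordinate; hence $\Gamma_U(\mathbb{Z}_2^{\,4})$ is a subgraph of $\Gamma_U(R)$. Counting shows $\Gamma_U(\mathbb{Z}_2^{\,4})$ has $v=15$ vertices and $e=80$ edges, so by Remark \ref{triangularface} and Lemma \ref{eulerformulagenus} its genus is at least $1-\frac{15}{2}+\frac{80}{6}>2$; thus $\mathbb{g}(\Gamma_U(R))>2$ and no ring with $n\geq4$ occurs. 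For $n=3$, if two factors have order $\geq 3$ then fixing the remaining coordinate to $0$ yields a clique on $\geq 9$ vertices, so $\mathbb{g}\geq\mathbb{g}(K_9)=3$ by Proposition \ref{genus}; hence at least two factors equal $\mathbb{Z}_2$ and $R\cong\mathbb{Z}_2\times\mathbb{Z}_2\times R_3$. If $|R_3|\geq 5$ the vertices with first coordinate $0$ already form a clique of size $2|R_3|\geq 9$, forcing genus $\geq 3$; the order-$4$ possibilities $R_3\in\{\mathbb{F}_4,\mathbb{Z}_4,\frac{\mathbb{Z}_2[x]}{(x^2)}\}$ are excluded by a direct Euler count (for $\mathbb{F}_4$ one gets $v=13$, $e=54$, so genus $\geq 4$); and $R_3=\mathbb{Z}_2$ is planar by Theorem \ref{Planarupperideal}. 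This isolates $\mathbb{Z}_2\times\mathbb{Z}_2\times\mathbb{Z}_3$, where $v=10$, $e=31$ already force genus $>1$ in the proof of Theorem \ref{genus1result}; I would confirm $\mathbb{g}=2$ by exhibiting an embedding in $\mathbb{S}_2$.

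For $n=2$ with both factors fields, Remark \ref{joinandunionoftwo graphs} gives $\Gamma_U(F_1\times F_2)\cong K_1\vee(K_{|F_1|-1}\cup K_{|F_2|-1})$, whose two blocks are $K_{|F_1|}$ and $K_{|F_2|}$ glued at the apex $(0,0)$; so by Lemma \ref{genusofblocks}, $\mathbb{g}=\mathbb{g}(K_{|F_1|})+\mathbb{g}(K_{|F_2|})$. Using $\mathbb{g}(K_q)=0,0,0,1,1,2,3$ for $q=2,3,4,5,7,8,9$ (Proposition \ref{genus}), the sum is $2$ precisely when one summand is $2$ and one is $0$ (one factor $\mathbb{F}_8$, the other $\mathbb{Z}_2,\mathbb{Z}_3$ or $\mathbb{F}_4$) or both summands are $1$ (both factors of order $5$ or $7$), yielding the six field rings. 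When $R_2$ is non-field, the clique $K_{|R_2|}$ on $\{x_1=0\}$ and Remark \ref{localringcardinality} force $|R_2|\in\{4,8\}$; for $|R_2|=8$, for $|R_1|\geq 5$, or when both factors are non-field, either the principal-ideal cliques give a $K_9$ or a direct Euler count rules out $\mathbb{S}_2$ (for instance $\Gamma_U(\mathbb{Z}_2\times\mathbb{Z}_8)$ has $v=12$, $e=50$, so genus $\geq 4$), while fields of order $\leq 3$ against an order-$4$ non-field are planar or toroidal by Theorems \ref{Planarupperideal} and \ref{genus1result}. Only $\mathbb{F}_4\times\mathbb{Z}_4$ and $\mathbb{F}_4\times\frac{\mathbb{Z}_2[x]}{(x^2)}$ survive, each containing $K_8$ (so $\mathbb{g}\geq 2$), and I would finish with explicit embeddings in $\mathbb{S}_2$.

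The main obstacle is the exact determination of genus for the three boundary rings $\mathbb{Z}_2\times\mathbb{Z}_2\times\mathbb{Z}_3$, $\mathbb{F}_4\times\mathbb{Z}_4$, and $\mathbb{F}_4\times\frac{\mathbb{Z}_2[x]}{(x^2)}$: establishing $\mathbb{g}=2$ demands both a lower bound, via an Euler-formula argument forbidding a $2$-cell embedding in $\mathbb{S}_2$ where the $K_8$ subgraph is already genus-tight, and a matching genus-two drawing. Producing clean embeddings in $\mathbb{S}_2$ is the delicate, computation-heavy step; by contrast the elimination of the larger rings is routine through clique sizes and Proposition \ref{genus}.
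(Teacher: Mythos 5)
Your overall architecture matches the paper's (eliminate $n\geq 4$, reduce $n=3$ to $\mathbb{Z}_2\times\mathbb{Z}_2\times\mathbb{Z}_3$, use the block decomposition $\mathbb{g}(K_{|F_1|})+\mathbb{g}(K_{|F_2|})$ for products of fields, and dispatch non-field factors via cliques and Euler counts), and several of your steps are clean improvements — e.g., handling all $n\geq 4$ at once through the induced copy of $\Gamma_U(\mathbb{Z}_2^{\,4})$ with $v=15$, $e=80$, where the paper splits into a $K_9$ argument for $n\geq 5$ and a separate Euler count for $n=4$. But there is a genuine gap in your $n=2$, $|R_2|=8$ sweep. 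Your claimed dichotomy, ``either the principal-ideal cliques give a $K_9$ or a direct Euler count rules out $\mathbb{S}_2$,'' provably fails for the three rings $\mathbb{Z}_2\times\frac{\mathbb{Z}_2[x,y]}{(x^2,xy,y^2)}$, $\mathbb{Z}_2\times\frac{\mathbb{Z}_4[x]}{(2x,x^2)}$ and $\mathbb{Z}_2\times\frac{\mathbb{Z}_4[x]}{(2x,x^2-2)}$, i.e., exactly when the order-$8$ factor has a non-principal maximal ideal. There the maximal principal ideals of $R_2$ have only two elements each, so the clique number of $\Gamma_U(R)$ is $8$ (realized by the vertices with first coordinate $0$) and no $K_9$ exists; and the graph has $v=12$, $e=41$, so Euler's formula with $\mathbb{g}=2$ gives $f=27$ and $2e-3f=1\geq 0$ — arithmetically consistent with a triangulation-heavy embedding in $\mathbb{S}_2$, so no counting contradiction arises. (Your sample computation $\mathbb{Z}_2\times\mathbb{Z}_8$ with $e=50$ works precisely because the maximal ideal of $\mathbb{Z}_8$ is principal, which creates the extra clique edges; it does not transfer to the other three order-$8$ local rings.) The block decomposition does not help either, since these graphs are $2$-connected.

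This is exactly where the paper does its hardest work: assuming a genus-$2$ embedding, it deduces from $f_4+2f_5=2e-3f=1$ that the embedding has $26$ triangular faces and one quadrilateral, then uses the fact that any embedding of the $K_8$-subgraph in $\mathbb{S}_2$ has either $17$ triangles and one pentagon or $16$ triangles and two quadrilaterals, and shows that inserting the remaining four vertices $(1,0),(1,z_1),(1,z_2),(1,z_3)$ with their edges into such a face structure forces an edge crossing; hence $\mathbb{g}>2$ and these rings are excluded. Without some argument of this kind your proof cannot decide these three rings, and your final list is not established. Relatedly, your closing paragraph misidentifies the delicate cases: the ``boundary'' rings $\mathbb{Z}_2\times\mathbb{Z}_2\times\mathbb{Z}_3$, $\mathbb{F}_4\times\mathbb{Z}_4$, $\mathbb{F}_4\times\frac{\mathbb{Z}_2[x]}{(x^2)}$ only need explicit genus-$2$ embeddings for the upper bound (the lower bound $\mathbb{g}\geq 2$ is easy from $K_8$, or from your own $v=10$, $e=31$ count), whereas the real difficulty is the exclusion argument above, which your toolkit of cliques plus raw edge counts cannot reach.
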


\begin{proof}
Let $R$ be a non-local finite commutative ring. Then $R \cong R_1 \times R_2 \times \cdots \times R_n$, where each $R_i$ is a local ring and $n \geq 2$. First assume that $\mathbb{g}(\Gamma_U(R)) = 2$. If $n \geq 5$, then note that the vertices $x_1 = (0, 0, \cdots, 0)$,   $x_2 = (1, 0, \cdots, 0),$\\
$x_3 = (0, 1, 0, \cdots, 0),$ $x_4 = (0, 0, 1, 0, \cdots, 0),$ $x_5 = (0, 0, 0, 1, 0, \cdots, 0),$ $x_6 = (0, 0, 0, 0, 1, 0, \cdots, 0),$ $x_7 =(1, 1, 0, 0, \cdots, 0),$ $x_8 = (1, 0, 1, 0, \cdots, 0),$ $x_9 = (1, 0, 0, 1, 0, \cdots, 0)$ induces a subgraph isomorphic to $K_9$; a contradiction.  We may now suppose that $n = 4$. Let $|R_i| > 2$ for some $i \in \{1, 2, 3, 4\}$. Without loss of generality, assume that $|R_1| >2$. Note that the set of vertices $x_1 = (0,0,0,0), x_2 = (a_1, 0, 0,0), x_3 = (0,1,0,0), x_4 = (1,1,0,0), x_5 = (a_1,1,0,0)$, \\ $x_6 =(a_1,1,1,0), x_7 = (0,1,1,0), x_8 = (1,1,1,0), x_9 = (0,0,1,0)$, where $a_1 \in R_1^*$, induces a subgraph isomorphic to $K_9$, a contradiction. Therefore, $|R_i| = 2$ for every $i \in \{1, 2, 3, 4\}$. Thus, in this case, we have $v = 15$, $e = 80$. Further, by Lemma  \ref{eulerformulagenus}, we get $f = 63$. It follows that $2e < 3f$ in $\Gamma_U(R)$, which is not possible. Therefore, $n \leq 3$. Let $n = 3$ so that $R \cong R_1 \times R_2 \times R_3$. If $|R_i| = 2$ for each $i \in \{1, 2, 3\}$, then by Theorem \ref{Planarupperideal}, $\Gamma_U(R)$ is a planar graph. If $|R_i| \geq 3$ for each $i$, then notice that the subgraph of $\Gamma_U(R)$ induced by the set  $\{(0, 0, 0), (a_1, 0, 0), (0, b_1, 0),  (a_2, 0, 0), (0, b_2, 0), (a_1, b_1, 0), (a_1, b_2, 0), (a_2, b_1, 0), (a_2, b_2, 0)\}$, where $a_1, a_2 \in R_1^*$ and $b_1, b_2 \in R_2^*$, is isomorphic to $K_9$; a contradiction. Without loss of generality, assume that $|R_1| = 2$. If $|R_2| \ge 3$ and $|R_3| \ge 3$, then consider the set \[Y = \{(0, 0, 0), (0, 0, c_1), (0, b_1, 0),  (0, 0, c_2), (0, b_2, 0), (0, b_1, c_1), (0, b_2, c_1),(0, b_1, c_2), (0, b_2, c_2)\},\] where $b_1, b_2 \in R_2^*$ and $c_1, c_2 \in R_2^*$. Note that $\Gamma_U(Y) \cong K_9$; a contradiction. Thus, without loss of generality, we may assume that $|R_1| = 2 = |R_2|$.  If $|R_3| > 4$, then for $c_i \in R_3^*$, note that the set \[V = \{(0,0,0), (0,1, 0), (0, 0, c_1), (0, 0, c_2), (0, 0, c_3), (0, 0, c_4), (0, 1, c_1), (0, 1, c_2), (0, 1, c_3)\}\]  induces a subgraph which is isomorphic to $K_9$, a contradiction. Further, if $|R_3| = 4$, then observe that the graph $\Gamma_U(R)$ contains two blocks formed by the set of vertices \[\{(0,0,0), (0,1, 0), (0, 0, c_1), (0, 0, c_2), (0, 0, c_3), (0, 1, c_1), (0, 1, c_2), (0, 1, c_3), (1,0,0), (1,0, c_1), (1, 0, c_2), (1, 0, c_3)\};\] one is isomorphic to $K_8$ and the other is isomorphic to $K_5$. Therefore, by Lemma \ref{genusofblocks}, we get $\mathbb{g}(\Gamma_U(R)) > 2$, a contradiction. Consequently, $R$ is isomorphic to the ring $\mathbb{Z}_2 \times \mathbb{Z}_2 \times \mathbb{Z}_3$.

Now let $R \cong R_1 \times R_2$. If either  $|R_1| \geq 9$ or $|R_2| \geq 9$, then there exists an induced subgraph of $\Gamma_U(R)$ which is isomorphic to $K_9$; a contradiction. It follows that the cardinality of each $R_i$ is at most $8$. Now we characterize the rings $R \cong R_1 \times R_2$ such that $\Gamma_U(R)$ has genus $2$ through the following cases.

\noindent\textbf{Case-1:} $|R_2| = 8$. Suppose that $|R_1| = 8$. If both $R_1$ and  $R_2$ are fields, then  $v = 15$,  $e = 56$ and Lemma \ref{eulerformulagenus} gives $f = 39$; a contradiction. If $R_1$ is not a field, then the existence of $K_9$ as an induced subgraph of $\Gamma_U(R)$ gives $\mathbb{g}(\Gamma_U(R)) > 2$; a contradiction. Consequently, $|R_1| \neq 8$. 

\textbf{Subcase-1.1:} $R_2$ is not a field. By the table given in \cite{nowickitables}, the ring $R_2$ is isomorphic to one of the following rings: $\mathbb{Z}_8$, $\frac{\mathbb{Z}_2[x]}{(x^3)}, \frac{\mathbb{Z}_2[x, y]}{(x^2, xy,y^2)}, \frac{\mathbb{Z}_4[x]}{(2x, x^2)},$ $\frac{\mathbb{Z}_4[x]}{(2x, x^2-2)}$. First suppose that $R_2$ is isomorphic to either $\mathbb{Z}_8$ or $\frac{\mathbb{Z}_2[x]}{(x^3)}$. If $|R_1| \in \{3, 4, 5, 7\}$, then we can easily obtain $K_9$ as an induced subgraph of $\Gamma_U(R)$, a contradiction. If $|R_1| =2$, then note that $\Gamma_U(R)$ has $12$ vertices and $50$ edges. Consequently, by Lemma \ref{eulerformulagenus}, $\Gamma_U(R)$ contains $36$ faces, a contradiction to the Remark \ref{triangularface}.

Now suppose that $R_2$ is isomorphic to one of the following $3$ rings: $\frac{\mathbb{Z}_2[x, y]}{(x^2, xy,y^2)}, \frac{\mathbb{Z}_4[x]}{(2x, x^2)}, \frac{\mathbb{Z}_4[x]}{(2x, x^2-2)}$. If $|R_1| \in \{5, 7\}$, then $\Gamma_U(R)$ has an induced subgraph isomorphic to $K_9$; a contradiction. Next, assume that $R_1$ is not a field of cardinality $4$. Then note that the set $\{(0, 0), (0, b_1), \cdots, (0, b_7), (z, 0)\}$, where $z \in Z(R_1^*)$ and $b_1, b_2, \cdots, b_7 \in R_2^*$, induces a subgraph which is isomorphic to $K_9$; a contradiction. Let $R_1$ be a field of cardinality $4$. Then $\Gamma_U(R)$ has $20$ vertices and $97$ edges. Further by Lemma \ref{eulerformulagenus}, $\Gamma_U(R)$ has $75$ faces, a contradiction to Remark \ref{triangularface}. If $|R_1| = 3$, then $v= 16$, $e=64$ and by Lemma \ref{eulerformulagenus} we get $f = 46$; a contradiction. If $|R_1| = 2$ then note that $\Gamma_U(R)$ has $12$ vertices and $41$ edges. Let $u_1, u_2, u_3, u_4 \in U(R_2)$ and $z_1, z_2, z_3 \in Z(R_2^*)$. Note that \[V(\Gamma_U(R)) = \{(0,0), (0,u_1), (0,u_2), (0,u_3), (0, u_4), (0,z_1), (0,z_2), (0,z_3), (1,0), (1,z_1), (1,z_2), (1,z_3)\}.\] Let $f_i$ be the number of faces of
size $i$ in an embedding of $\Gamma_U(R)$. Since $\mathbb{g}(\Gamma_U(R)) = 2$, by Lemma \ref{eulerformulagenus}, we have $f = 27$. Note that $f_4 + 2f_5  = 2e-3f = 1$. It follows that any embedding of  $\Gamma_U(R)$ in $\mathbb{S}_2$ has $26$ triangular faces and one quadrilateral face. Now consider the set $X = \{(0,0), (0, u_1), (0,u_2), (0,u_3), (0, u_4), (0, z_1), (0,z_2), (0,z_3)\}$, we get $\Gamma_U(X) \cong K_8$. Consequently, any embedding of $\Gamma_U(X)$ has either one pentagonal, $17$ triangular faces or two quadrilateral, 16 triangular faces. Suppose $\Gamma_U(X)$ has one pentagonal, $17$ triangular faces in an embedding on $\mathbb{S}_2$. If we insert the set $Y = \{(1,0), (1,z_1), (1,z_2), (1,z_3)\}$ of vertices and their respective edges to the embedding of $\Gamma_U(X)$ then $\Gamma_U(Y)$ must be embedded in the pentagonal face. Consequently, any embedding of $\Gamma_U(R)$ on $\mathbb{S}_2$ leads to an edge crossing. Similarly, the insertion of $\Gamma_U(Y)$ in an embedding of $\Gamma_U(X)$, when it has two quadrilateral and 16 triangular faces, yields to an edge crossing. Therefore, $\mathbb{g}(\Gamma_U(R)) > 2$; a contradiction and so this subcase is not possible.

\textbf{Subcase-1.2:}  $R_2$ is a field.  If $|R_1| \in \{5, 7\}$, then by Lemma \ref{genusofblocks} and Remark \ref{joinandunionoftwo graphs}, we obtain $\mathbb{g}(\Gamma_U(R)) \neq 2$; a contradiction. If $|R_1| = 4$ such that $R_1$ is not a field, then there exists a $z \in Z(R_1^*)$. Consequently, for the set $Y' = \{(0,0), (0,1), \cdots, (0,7), (z, 0)\}$, we have $\Gamma_U(Y') \cong K_9$; a contradiction. Thus, $R$ is isomorphic to one of the following $3$ rings: $\mathbb{Z}_2 \times \mathbb{F}_8,$ $ \mathbb{Z}_3 \times \mathbb{F}_8,$ $ \mathbb{F}_4 \times \mathbb{F}_8$.
  
\noindent\textbf{Case-2:} $|R_2|\in \{5, 7\}$. If $R_1$ is a field such that $|R_1| \leq 4$, then by Theorem \ref{genus1result}, we have $\mathbb{g}(\Gamma_U(R)) = 1$. If $|R_1| = 4$ and $R_1$ is not a field, then for $z \in Z(R_1^*)$ and $A = \{(0, 0), (0, 1), (0, 2), (0, 3), (0, 4), (z, 0), (z, 1), (z, 2), (z, 3)\}$, note that $\Gamma_U(A) \cong K_9$,  Consequently, $R$ is isomorphic to one of the following $3$ rings: $\mathbb{Z}_7 \times \mathbb{Z}_7, \mathbb{Z}_5 \times \mathbb{Z}_7, \mathbb{Z}_5 \times \mathbb{Z}_5$.

\noindent\textbf{Case-3:} $|R_2| = 4$. Assume that $|R_1| = 4$. If both $R_1$ and $R_2$ are fields, then by Theorem \ref{Planarupperideal}, $\Gamma_U(R_1 \times R_2)$ is planar; a contradiction. We may now suppose that both $R_1$ and  $R_2$ are not fields. Then note that $\Gamma_U(R)$ has $12$ vertices and $50$ edges. Consequently, by Remark \ref{triangularface}, $\Gamma_U(R)$ has $36$ faces, a contradiction. Note that by Theorem \ref{Planarupperideal} and Theorem \ref{genus1result}, we get $|R_1| \not \in \{2, 3\}$. Thus, $|R_1| = 4$ and both $R_1$, $R_2$ cannot be fields. Consequently, either $R \cong \mathbb{F}_4 \times \mathbb{Z}_4$ or $R \cong \mathbb{F}_4 \times \frac{\mathbb{Z}_2[x]}{(x^2)}$.

\noindent\textbf{Case-4:} $|R_2| \leq 3$. If $|R_1| \leq 3$, then by Theorem \ref{Planarupperideal}, the graph $\Gamma_U(R)$ is planar. 

Conversely, suppose that $R \cong \mathbb{Z}_2 \times \mathbb{Z}_2 \times \mathbb{Z}_3$.  By Figure \ref{genustwo223}, $\mathbb{g}(\Gamma_U(R)) = 2$. If either $R \cong \mathbb{F}_4 \times \mathbb{Z}_4,$ or $R \cong \mathbb{F}_4 \times \frac{\mathbb{Z}_2[x]}{(x^2)}$, then  $\Gamma_U(R) \cong  K_2 \vee (K_2 \bigcup K_6)$. Consequently, by Proposition \ref{genus} and Lemma \ref{genusofblocks}, we obtain $\mathbb{g}(\Gamma_U(R)) = 2$. Further, if $R$ is isomorphic to one of the remaining $6$ rings, then by Lemma \ref{genusofblocks} and Remark \ref{joinandunionoftwo graphs}, we obtain $\mathbb{g}(\Gamma_U(R)) = 2$.  
\begin{figure}[h!]
\centering
\includegraphics[width=0.5 \textwidth]{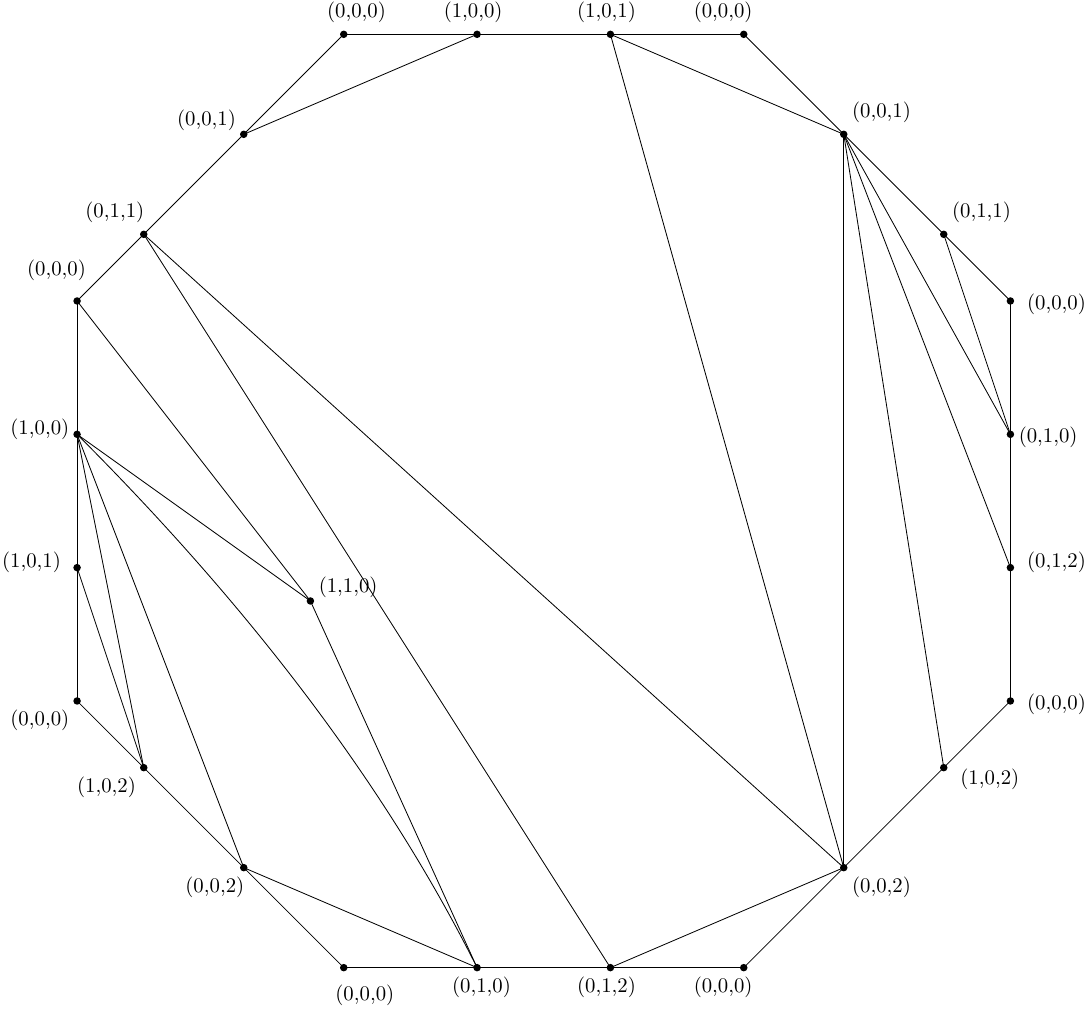}
			\caption{ Embedding of  $\Gamma_U(\mathbb{Z}_2 \times \mathbb{Z}_2 \times \mathbb{Z}_3)$  in $\mathbb{S}_2$}
			\label{genustwo223}
\end{figure}
\end{proof}
\subsection{Crosscap of $\Gamma_U(R)$}
In this subsection, we characterize all the non-local finite commutative rings such that $\Gamma_U(R)$ has crosscap $1$ or $2$. We begin with the following lemma.

\begin{lemma}\label{crosscaplemma1}
Let $R$ be a non-local finite commutative ring such that $R \cong R_1 \times R_2 \times \cdots \times R_n$, for $n \geq 4$. Then $cr(\Gamma_U(R) > 2$.
\end{lemma}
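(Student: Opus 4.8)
The plan is to exhibit a large complete graph as an induced subgraph of $\Gamma_U(R)$ and then invoke Proposition \ref{crosscap} to bound the crosscap from below. Since the crosscap satisfies $cr(K_n) = \left\lceil \frac{(n-3)(n-4)}{6} \right\rceil$ for $n \geq 3$ (with the single exception at $n=7$), I would first determine which complete graph forces $cr > 2$. Computing, $cr(K_8) = \left\lceil \frac{5 \cdot 4}{6} \right\rceil = \left\lceil \frac{20}{6} \right\rceil = 4 > 2$, so it suffices to embed a $K_8$ as an induced subgraph. This mirrors exactly the strategy used in Lemma \ref{genusgreaterthan1}, where eight explicit vertices were shown to induce $K_8$; the crosscap bound is even more forgiving than the genus bound there.

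First I would reduce to the case $n \geq 4$ as given and recall that any element of $R$ with at least two coordinates equal to a unit and the rest suitably chosen will be comparable (via an upper bound $z$) to the others in the poset of principal ideals. Concretely, I would take the same eight vertices as in Lemma \ref{genusgreaterthan1}, namely $x_1 = (0,0,\ldots,0)$, $x_2 = (1,0,\ldots,0)$, $x_3 = (0,1,0,\ldots,0)$, $x_4 = (0,0,1,0,\ldots,0)$, $x_5 = (0,0,0,1,0,\ldots,0)$, $x_6 = (1,1,0,\ldots,0)$, $x_7 = (1,0,1,0,\ldots,0)$, and $x_8 = (1,0,0,1,0,\ldots,0)$. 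The key step is to verify that every pair among these eight is adjacent in $\Gamma_U(R)$, i.e., that each pair $(x_i), (x_j)$ admits a common non-unit upper bound $(z)$ in the poset of principal ideals; since all eight vectors have a zero in at least one fixed coordinate (they all vanish on coordinates $5, 6, \ldots, n$, and more), one can always produce such a $z$ by taking coordinatewise a unit where either generator is nonzero and $0$ elsewhere, keeping $z$ a non-unit because it vanishes on the last coordinate. This shows $\Gamma_U(X) \cong K_8$ with $X = \{x_1, \ldots, x_8\}$.

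Having established the induced $K_8$, the conclusion is immediate: by Proposition \ref{crosscap}, $cr(K_8) = 4$, and since the crosscap of a graph is monotone under taking subgraphs, $cr(\Gamma_U(R)) \geq cr(K_8) = 4 > 2$. This completes the proof.

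The main obstacle, such as it is, lies in the adjacency verification of the second paragraph rather than in any topological computation: one must check carefully that the chosen non-unit upper bound $z$ genuinely dominates both principal ideals and is itself a non-unit in the full product ring. Because $n \geq 4$ guarantees a coordinate (indeed several) on which all eight chosen vectors vanish, the witness $z$ can always be arranged to be a zero-divisor, so this step is routine once the right $z$ is written down. I expect no genuine difficulty, since the present lemma is strictly weaker than the already-proven genus statement (a crosscap bound of $4$ versus a genus bound of $1$), and the very same vertex set works verbatim.
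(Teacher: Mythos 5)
Your overall strategy---exhibit an induced $K_8$ inside $\Gamma_U(R)$ and invoke Proposition \ref{crosscap} to get $cr(K_8)=\left\lceil \frac{5\cdot 4}{6}\right\rceil = 4 > 2$---is exactly the paper's proof of Lemma \ref{crosscaplemma1}, and your conclusion is correct. However, your adjacency verification has a genuine flaw precisely at the boundary case $n=4$. You reuse the vertex set of Lemma \ref{genusgreaterthan1} and justify all adjacencies at once by asserting that the eight vectors ``all vanish on coordinates $5,6,\ldots,n$'' and that the witness $z$ stays a non-unit ``because it vanishes on the last coordinate.'' For $n=4$ the range $5,\ldots,n$ is empty, your set contains $x_5=(0,0,0,1)$ and $x_8=(1,0,0,1)$, which are nonzero in the last coordinate, and in fact the eight vectors then have \emph{no} common zero coordinate. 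The claim survives, but only via a pair-by-pair check your write-up skips: for every pair, the union of the supports misses some coordinate of $\{1,2,3,4\}$ (e.g., $x_6=(1,1,0,0)$ and $x_8=(1,0,0,1)$ both vanish in the third coordinate), so a pairwise non-unit upper bound can be chosen. The paper sidesteps this entirely by choosing eight vertices supported on the first three coordinates only, namely $(0,0,\cdots,0)$, $(1,0,\cdots,0)$, $(0,1,0,\cdots,0)$, $(0,0,1,0,\cdots,0)$, $(1,1,0,\cdots,0)$, $(1,0,1,0,\cdots,0)$, $(0,1,1,0,\cdots,0)$, $(1,1,1,0,\cdots,0)$, so that the single non-unit $z=(1,1,1,0,1,\cdots,1)$ witnesses every adjacency simultaneously for all $n\geq 4$.

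One smaller point: your closing remark that this lemma is ``strictly weaker'' than the genus statement is misleading. A lower bound on the genus does not formally yield a lower bound on the crosscap---one only has $cr(\Gamma) \leq 2\mathbb{g}(\Gamma)+1$, and no inequality runs in the other direction---which is why the paper proves Lemmas \ref{genusgreaterthan1} and \ref{crosscaplemma1} separately. Fortunately your argument does not actually rely on that implication, since you compute $cr(K_8)$ directly; with the $n=4$ adjacency check repaired as above, the proof is sound.
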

\begin{proof}
Let $n \geq 4$. Then note that the vertices $x_1 = (0, 0, \cdots, 0),$  $x_2 = (1, 0, \cdots, 0),$ $x_3 = (0, 1, 0, \cdots, 0),$\\
$x_4 = (0, 0, 1, 0, \cdots, 0),$  $x_5 =(1, 1, 0, 0, \cdots, 0),$ $x_6 = (1, 0, 1, 0, \cdots, 0),$ $x_7 = (0, 1, 1, 0, \cdots, 0),$ $x_8 = (1, 1, 1, 0, \cdots, 0)$ induces a subgraph of $\Gamma_U(R)$ which is isomorphic to $K_8$. Therefore, by Proposition \ref{crosscap}, we get $cr(\Gamma_U(R)) > 2$.
\end{proof}
In the following theorem, we precisely provide all the non-local finite commutative rings $R$ such that the graph $\Gamma_U(R)$ is of crosscap one.

\begin{theorem}\label{crosscap1result}
Let $R$ be a non-local finite commutative ring. Then the crosscap of $\Gamma_U(R)$ is $1$ if and only if $R$ is isomorphic to one of the following $5$ rings:
\begin{center}
    $\mathbb{Z}_2 \times \mathbb{Z}_5,$ $\mathbb{Z}_3 \times \mathbb{Z}_5,$ $\mathbb{F}_4 \times \mathbb{Z}_5,$  $\mathbb{Z}_3 \times \mathbb{Z}_4,$ $\mathbb{Z}_3 \times \frac{\mathbb{Z}_2[x]}{(x^2)}$.
\end{center}
\end{theorem}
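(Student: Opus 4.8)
The plan is to follow the same structural template established in Theorems \ref{genus1result} and \ref{genus2result}: first invoke Lemma \ref{crosscaplemma1} to reduce to the case $n \leq 3$, then eliminate $n=3$, and finally analyze $R \cong R_1 \times R_2$ case by case according to the cardinalities $|R_1|, |R_2|$. Suppose $cr(\Gamma_U(R)) = 1$. Since $cr(K_7) = 3 > 1$, the graph $\Gamma_U(R)$ cannot contain $K_7$ as a subgraph, so I would exploit $K_7$-avoidance (rather than $K_8$ or $K_9$) as the main forbidden-subgraph obstruction throughout. For $n = 3$, if all $|R_i| = 2$ the graph is planar by Theorem \ref{Planarupperideal} (crosscap $0$), and if some factor has cardinality at least $3$ one produces a $K_7$ induced subgraph analogously to the constructions in the earlier proofs, giving $cr > 1$; this forces $R \cong R_1 \times R_2$.

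For the product of two local rings, I would first bound the cardinalities: if $|R_i| \geq 7$ for some $i$, the set $\{(0,0), (a_1,0), \ldots, (a_6,0)\}$ with $a_j \in R_i^*$ induces $K_7$, so $cr > 1$; hence $|R_i| \leq 6$, and since each local ring has prime-power order (Remark \ref{localringcardinality}) the possible orders are $2,3,4,5$. Then I would split into cases by $|R_2|$. When $|R_2| = 5$: if $|R_1| = 5$ one checks via Lemma \ref{crosscapofblocks} (computing the crosscap through the block decomposition coming from Remark \ref{joinandunionoftwo graphs}) that $cr > 1$; if $R_1$ is a non-field of order $4$, a zero divisor $z$ yields an induced $K_7$ or larger as in the genus proofs; this leaves $\mathbb{Z}_2 \times \mathbb{Z}_5$, $\mathbb{Z}_3 \times \mathbb{Z}_5$, $\mathbb{F}_4 \times \mathbb{Z}_5$. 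When $|R_2| = 4$: if both factors are fields of order $4$ the graph is planar (Theorem \ref{Planarupperideal}); if $R_1$ is a non-field, a zero-divisor construction forces $K_7$ when $|R_1| = 4$, while $|R_1| = 2$ with $R_2$ a non-field gives planarity; this isolates $\mathbb{Z}_3 \times \mathbb{Z}_4$ and $\mathbb{Z}_3 \times \frac{\mathbb{Z}_2[x]}{(x^2)}$. When $|R_2| \leq 3$ with $|R_1| \leq 3$ the graph is planar by Theorem \ref{Planarupperideal}, eliminating those.

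I also need to rule out orders I have not yet named, most importantly $|R_2| = 6$ — but no local ring has order $6$ by Remark \ref{localringcardinality}, so this case does not arise, which is a convenient simplification relative to the genus theorems. The remaining delicate point is that $K_7$-avoidance alone is not sufficient: several candidate rings contain no $K_7$ yet must still be shown to have crosscap exactly $1$ (not $0$ or $2$). For the lower bound $cr \geq 1$ I would appeal to non-planarity via Theorem \ref{planar criteria} (exhibiting $K_5$ or $K_{3,3}$ subdivisions) together with the earlier planarity classification (Theorem \ref{Planarupperideal}), which already tells me precisely which rings are planar. For the upper bound $cr \leq 1$, the converse direction, I would use the block structure $\Gamma_U(R) \cong K_1 \vee (K_{|R_1|-1} \cup K_{|R_2|-1})$ from Remark \ref{joinandunionoftwo graphs} for the field cases, apply Lemma \ref{crosscapofblocks} to reduce to the crosscap of the individual blocks via Proposition \ref{crosscap}, and supply explicit projective-planar embeddings (a figure, as in the genus proofs) for the two non-field rings $\mathbb{Z}_3 \times \mathbb{Z}_4$ and $\mathbb{Z}_3 \times \frac{\mathbb{Z}_2[x]}{(x^2)}$.

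The step I expect to be the main obstacle is precisely this block-crosscap bookkeeping in the converse: deciding whether each relevant graph is \emph{orientably simple} and then selecting the correct branch of Lemma \ref{crosscapofblocks}, since the crosscap of a graph is not simply additive over blocks and a careless application gives the wrong value. Verifying the exact crosscap (rather than just an inequality) for borderline cases such as $\mathbb{F}_4 \times \mathbb{Z}_5$, where the join-of-cliques structure sits right at the boundary between crosscap $1$ and $2$, will require the most care.
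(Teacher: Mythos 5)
Your overall sieve matches the paper's proof almost step for step (Lemma \ref{crosscaplemma1}, $K_7$-avoidance since $cr(K_7)=3$, the cardinality bound via Remark \ref{localringcardinality}, the case split on $|R_2|\in\{4,5\}$, explicit $\mathbb{N}_1$ embeddings for $\mathbb{Z}_3\times\mathbb{Z}_4$ and $\mathbb{Z}_3\times\frac{\mathbb{Z}_2[x]}{(x^2)}$), and your converse for the field cases via Lemma \ref{crosscapofblocks} is a legitimate variant of the paper's route (the paper instead exhibits an embedding of $\Gamma_U(\mathbb{F}_4\times\mathbb{Z}_5)$ in $\mathbb{N}_1$ and realizes $\mathbb{Z}_2\times\mathbb{Z}_5$, $\mathbb{Z}_3\times\mathbb{Z}_5$ as induced subgraphs of it); one can check the block bookkeeping closes: for blocks $K_m$ ($m\le 4$) and $K_5$ the orientably simple branch would give $cr=0$, contradicting the $K_5$ subgraph, so the other branch applies and yields $cr=2k-\sum\mu(\Gamma_i)=4-(2+1)=1$, and similarly two $K_5$ blocks give $cr=2$ for $\mathbb{Z}_5\times\mathbb{Z}_5$, consistent with the next theorem.

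However, there is one genuine gap, in your treatment of $n=3$. You claim that whenever some $|R_i|\ge 3$ one "produces a $K_7$ induced subgraph analogously to the constructions in the earlier proofs." This fails for $R\cong\mathbb{Z}_2\times\mathbb{Z}_2\times\mathbb{Z}_3$: there adjacency amounts to lying in a common maximal ideal, the three maximal ideals have $6$, $6$ and $4$ elements, and any clique containing a vertex belonging to exactly one maximal ideal is contained in that ideal, so the clique number is exactly $6$ and $\Gamma_U(R)$ contains no $K_7$. Your only remaining lower-bound tool is non-planarity via Theorem \ref{planar criteria}/Theorem \ref{Planarupperideal}, which gives merely $cr\ge 1$, so this ring survives your sieve and the claimed list of five rings is not established. (Note you also cannot shortcut by citing $\mathbb{g}(\Gamma_U(R))=2$ from Theorem \ref{genus2result}: there is no general lower bound on crosscap in terms of genus, since projective-planar graphs can have large orientable genus.) The paper closes exactly this case with an Euler-formula count: here $v=10$ and $e=31$, so a $2$-cell embedding in $\mathbb{N}_1$ would force $f=2-1-v+e=22$ by Lemma \ref{eulerformula}, whence $2e=62<66=3f$, contradicting $2e\ge 3f$ for a simple graph; hence $cr(\Gamma_U(\mathbb{Z}_2\times\mathbb{Z}_2\times\mathbb{Z}_3))\ge 2$. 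Your proposal needs this (or an equivalent edge--face counting) argument inserted before the reduction to $R\cong R_1\times R_2$ is complete; the rest of the case analysis then goes through as you describe.
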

\begin{proof}
Let $R$ be a non-local finite commutative ring. Then $R \cong R_1 \times R_2 \times \cdots \times R_n$, where each $R_i$ is a local ring and $n \geq 2$. Suppose that $cr(\Gamma_U(R)) = 1$. In view of  Lemma \ref{crosscaplemma1}, either $R \cong R_1 \times R_2 \times R_3$ or $R \cong R_1 \times R_2$. First assume that $R \cong R_1 \times R_2 \times R_3$. Suppose that $|R_i| \geq 3$ for all $i \in \{1, 2, 3\}$. Let $a_1, a_2 \in R_1^*$, $b_1, b_2 \in R_2^*$ and $c_1, c_2 \in R_3^*$. Then for the set $X = \{(0, 0, 0), (a_1, 0, 0), (0, b_1, 0), (a_2, 0, 0), (a_1, b_1, 0), (0, b_2, 0), (a_1, b_2, 0)\}$, we obtain $\Gamma_U(X) \cong K_7$; a contradiction. Without loss of generality, let $|R_2| \geq 3$ and $|R_3| \geq 3$. Then the set $ \{(0, 0, 0), (0, 0, c_1), (0, b_1, 0), (0, 0, c_2), (0, b_1, c_1), (0, b_2, 0), (0, b_2, c_1)\}$ induces $K_7$ as a subgraph of $\Gamma_U(R)$, which is not possible. Without loss of generality, we may assume that $|R_1| = 2 = |R_2|$ and $|R_3| \geq 3$. If $|R_3| > 3$, then the set $ \{(0, 0, 0), (0, 0, c_1), (0, 0, c_2), (0, 0, c_3), (0, 1, c_1), (0, 1, 0), (0, 1, c_2)\}$, where $c_1, c_2, c_3 \in R_3^*$, induces a subgraph isomorphic to $K_7$, which is a contradiction. If $|R_3| = 3$, then $\Gamma_U(R)$ has $10$ vertices and $31$ edges. By Lemma \ref{eulerformula}, we have  $f = 22$, a contradiction to Remark \ref{triangularface}. Therefore, $|R_i| = 2$ for each $i$ and so $R \cong \mathbb{Z}_2 \times \mathbb{Z}_2 \times \mathbb{Z}_2$. By Figure \ref{planardrawingof222}, the graph $\Gamma_U(R)$ is planar.
Thus, $R \cong R_1 \times R_2$. If either $|R_1| \geq 7$ or $|R_2| \geq 7$ then we can easily get $K_7$ as an induced subgraph of $\Gamma_U(R)$. By Proposition \ref{crosscap}, $cr(\Gamma_U(R)) \neq 1$; a contradiction. In view of Remark \ref{localringcardinality}, we classify $R$ through the following cases.

\noindent\textbf{Case-1:} $|R_2| = 5$. By Remark \ref{joinandunionoftwo graphs} and Lemma \ref{crosscapofblocks}, we have $|R_1| \neq 5$. It follows that $|R_1| \leq 4$. Further, if $R_1$ is not a field of cardinality $4$, then there exists a  $z \in Z(R_1^*)$. Note that the set $X' = \{(0, 0), (0, 1), \cdots, (0, 4), (z, 0), (z, 1)\}$ induces $K_7$ as a subgraph of $\Gamma_U(R)$; again a contradiction. Thus, $R$ is isomorphic to one of the following three rings: $\mathbb{Z}_2 \times \mathbb{Z}_5,$ $\mathbb{Z}_3 \times \mathbb{Z}_5,$ $\mathbb{F}_4 \times \mathbb{Z}_5$. 

\noindent\textbf{Case-2:}  $|R_2| = 4$.
Let $|R_1| = 4$. If both $R_1$ and $R_2$ are fields, then by Theorem \ref{Planarupperideal}, the graph $\Gamma_U(R)$ is planar. Without loss of generality, assume that $R_1$ is not a field.  Then for $z \in Z(R_1^*)$, note that the set $Y = \{(0, 0), (0, b_1), (0, b_2), (0, b_3), (z, 0), (z, b_1), (z, b_2)\}$ induces $K_7$ as a subgraph of $\Gamma_U(R)$; a contradiction. Therefore, $|R_1| \neq 4$. Further, suppose that  $|R_1| \in \{2, 3\}$ and $R_2$ is a field. Then by Theorem \ref{Planarupperideal}, $\Gamma_U(R)$ is planar. Next, if $|R_1| = 2$ and $R_2$ is not a field, then by Theorem \ref{Planarupperideal}, we have $cr(\Gamma_U(R)) = 0$; again a contradiction. Thus, either  $R \cong \mathbb{Z}_3 \times \mathbb{Z}_4$  or $R \cong \mathbb{Z}_3 \times \frac{\mathbb{Z}_2[x]}{(x^2)}$.

By Theorem \ref{Planarupperideal}, for each $i \in \{1, 2\}$, note that $|R_i| \leq 3$ is not possible. Conversely, if $R \cong \mathbb{F}_4 \times \mathbb{Z}_5$, then from Figure \ref{crosscap1of45}, $cr(\Gamma_U(R)) = 1$. For $R \cong \mathbb{Z}_2 \times \mathbb{Z}_5$, we get $\Gamma_U(\mathbb{Z}_2 \times \mathbb{Z}_5) \cong \Gamma_U(X)$, where $X = V(\Gamma_U(\mathbb{F}_4 \times \mathbb{Z}_5)) - \{(2,0), (3,0)\}$. It follows that $cr(\Gamma_U(\mathbb{Z}_2 \times \mathbb{Z}_5)) = 1$. Similarly, $cr(\Gamma_U(\mathbb{Z}_3 \times \mathbb{Z}_5)) = 1$ because $\Gamma_U(\mathbb{Z}_3 \times \mathbb{Z}_5) \cong \Gamma_U(Y)$ for $Y = V(\Gamma_U(\mathbb{F}_4 \times \mathbb{Z}_5)) - \{(3,0)\}$. Finally, if either $R \cong \mathbb{Z}_3 \times \mathbb{Z}_4$ or $R \cong \mathbb{Z}_3 \times \frac{\mathbb{Z}_2[x]}{(x^2)}$, then by Figure \ref{crosscap1of34}, we obtain $cr(\Gamma_U(R)) = 1$.
\begin{figure}[h!]
\centering
\includegraphics[width=0.4 \textwidth]{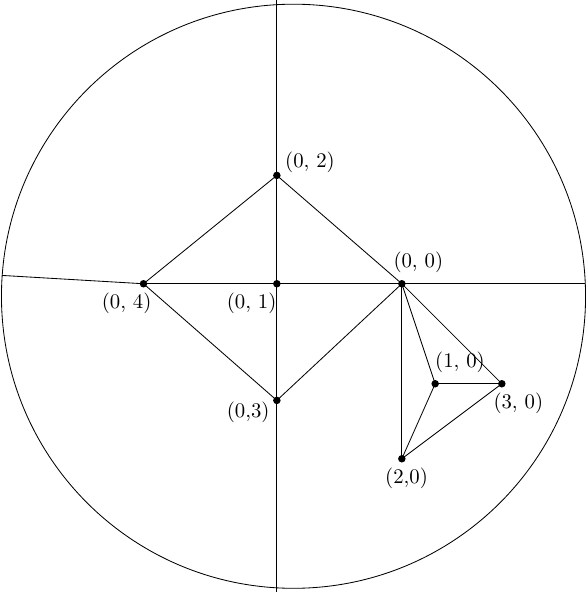}
			\caption{ Embedding of $\Gamma_U(\mathbb{F}_4 \times \mathbb{Z}_5)$  in $\mathbb{N}_1$}
			\label{crosscap1of45}
\end{figure}
\begin{figure}[h!]
\centering
\includegraphics[width=0.8 \textwidth]{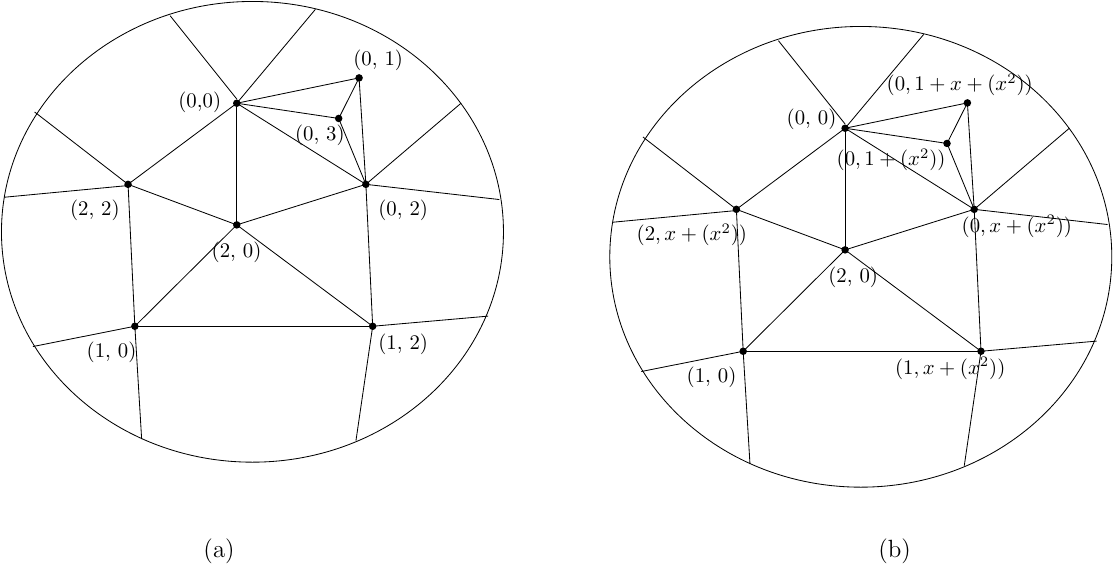}
			\caption{ Embedding of (a) $\Gamma_U(\mathbb{Z}_3 \times \mathbb{Z}_4)$ and (b) $\Gamma_U(\mathbb{Z}_3 \times \frac{\mathbb{Z}_2[x]}{(x^2)})$  in $\mathbb{N}_1$}
			\label{crosscap1of34}
\end{figure}
\end{proof}
The following theorem illustrates that the ring $\mathbb{Z}_5 \times \mathbb{Z}_5$ is the only ring whose upper ideal relation graph has crosscap two.
\begin{theorem}
Let $R$ be a non-local finite commutative ring. Then $cr(\Gamma_U(R)) = 2$ if and only if $R$  is isomorphic to $\mathbb{Z}_5 \times \mathbb{Z}_5$.
\end{theorem}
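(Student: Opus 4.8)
The plan is to mirror the structure of the preceding genus and crosscap theorems, reducing step by step to a single surviving ring. I would begin by invoking Lemma~\ref{crosscaplemma1} to conclude that $n\le 3$ in the decomposition $R\cong R_1\times\cdots\times R_n$, since $n\ge 4$ forces $cr(\Gamma_U(R))>2$. The case $n=3$ should be eliminated quickly: if $R\cong R_1\times R_2\times R_3$, then either all three factors are $\mathbb{Z}_2$ (giving a planar graph by Theorem~\ref{Planarupperideal}, hence crosscap $0$), or some factor has cardinality at least $3$, in which case I would exhibit an induced $K_8$ exactly as in the proof of Lemma~\ref{crosscaplemma1} or Theorem~\ref{genus2result}, so that Proposition~\ref{crosscap} gives $cr(\Gamma_U(R))\ge cr(K_8)=\lceil 20/6\rceil=4>2$. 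This leaves $R\cong R_1\times R_2$ as the only possibility.

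For the product of two local rings I would first bound the factor sizes. If either $|R_i|\ge 8$, an induced $K_8$ appears among the elements of the form $(r,0)$, forcing crosscap at least $4$; and $|R_i|=7$ with the other factor nontrivial produces an induced $K_7$ together with enough extra adjacent vertices to exceed crosscap $2$, while $|R_1|=|R_2|=7$ can be ruled out by the Euler-formula count ($v,e,f$ with $2e<3f$) using Lemma~\ref{eulerformula} and Remark~\ref{triangularface}, as was done for genus in Theorem~\ref{genus2result}. The substantive work is then a finite case analysis over $|R_2|\in\{4,5,6,7\}$ (with $|R_1|\le|R_2|$). For each case I would combine three tools: the induced-$K_7$ / induced-$K_8$ argument (via Proposition~\ref{crosscap}) to kill large or non-field factors that contain a nonzero zero-divisor $z\in Z(R_i^*)$ through the set $\{(0,b)\}\cup\{(z,b)\}$; Theorem~\ref{crosscap1result} to discard every ring already known to have crosscap exactly $1$; and the block decomposition formula Lemma~\ref{crosscapofblocks} together with Remark~\ref{joinandunionoftwo graphs} to compute $cr$ exactly when $R_1\times R_2$ is a product of two fields, writing $\Gamma_U(R)\cong K_1\vee(K_{|R_1|-1}\cup K_{|R_2|-1})$.

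Concretely, the field cases reduce to evaluating $cr(K_1\vee(K_{|R_1|-1}\cup K_{|R_2|-1}))$ for small parameters: $\mathbb{Z}_5\times\mathbb{Z}_5$ gives $K_1\vee(K_4\cup K_4)$, whose blocks are two copies of $K_5$, so by Proposition~\ref{crosscap} and Lemma~\ref{crosscapofblocks} one gets $cr=2$; whereas $\mathbb{Z}_5\times\mathbb{Z}_7$ and $\mathbb{Z}_7\times\mathbb{Z}_7$, which were shown in Theorem~\ref{genus2result} to have genus $2$, should be checked to have crosscap strictly greater than $2$ via the block formula (their blocks include $K_7$, whose crosscap is $3$). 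The non-field cases of cardinality $4$, namely $\mathbb{F}_4\times\mathbb{Z}_4$ and $\mathbb{F}_4\times\frac{\mathbb{Z}_2[x]}{(x^2)}$, give $\Gamma_U(R)\cong K_2\vee(K_2\cup K_6)$ and must likewise be excluded by the block computation. I expect the main obstacle to be precisely these borderline exclusions: rings whose \emph{genus} is $2$ but whose \emph{crosscap} must be shown to be $3$ or more. Here an induced $K_8$ is unavailable, so the argument cannot rest on Proposition~\ref{crosscap} for the whole graph; instead one must either apply Lemma~\ref{crosscapofblocks} carefully after identifying the blocks, deciding orientable simplicity via $\mu(\Gamma_i)$, or run a delicate Euler-characteristic / face-counting obstruction as in Subcase-1.1 of Theorem~\ref{genus2result}. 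Establishing that $\mathbb{Z}_5\times\mathbb{Z}_5$ genuinely attains crosscap $2$ (the converse direction) I would finish with an explicit embedding figure in $\mathbb{N}_2$, complemented by the lower bound $cr\ge 2$ coming from the fact that its genus is already $2$ together with the block-sum formula.
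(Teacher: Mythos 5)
Your overall architecture matches the paper's proof (reduce to $n\le 3$ via Lemma \ref{crosscaplemma1}, kill $n=3$, bound the factor sizes in $R_1\times R_2$, run cases on $|R_2|$ using induced complete subgraphs, Theorem \ref{crosscap1result}, and the block formula, then compute $cr(\Gamma_U(\mathbb{Z}_5\times\mathbb{Z}_5))=2$ from $K_1\vee(K_4\cup K_4)$ having two $K_5$-blocks). But there is a genuine gap in your treatment of $n=3$: you claim that if some factor has cardinality at least $3$ then $\Gamma_U(R)$ contains an induced $K_8$. This fails precisely for $R\cong\mathbb{Z}_2\times\mathbb{Z}_2\times\mathbb{Z}_3$. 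There the three maximal ideals are principal of sizes $6$, $6$ and $4$, a clique must pairwise share one of them, and a degree count shows the clique number is $6$: only the five vertices $(0,0,0),(0,0,1),(0,0,2),(0,1,0),(1,0,0)$ have degree at least $6$, so the graph contains no $K_7$, let alone $K_8$. This is not a removable technicality: $\mathbb{Z}_2\times\mathbb{Z}_2\times\mathbb{Z}_3$ has genus exactly $2$ (it appears in Theorem \ref{genus2result}), so it is exactly one of the ``borderline genus-$2$'' rings you yourself flagged as the hard part in the $n=2$ analysis --- you simply overlooked that one arises at $n=3$, and your proposed dichotomy silently (and wrongly) discards it via a nonexistent $K_8$. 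The paper excludes it by the Euler-formula obstruction: here $v=10$ and $e=31$, so a $2$-cell embedding in $\mathbb{N}_2$ would force $f=21$ by Lemma \ref{eulerformula}, contradicting $2e\ge 3f$ since $62<63$; hence $cr>2$ for this ring. Some argument of this kind (or an explicit analysis of its blocks) must be inserted where your $K_8$ claim stands.

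Two smaller points. First, your case list $|R_2|\in\{4,5,6,7\}$ includes $6$, which Remark \ref{localringcardinality} rules out, and the bound can be tightened at the outset: since $cr(K_7)=3$ by the exceptional case of Proposition \ref{crosscap}, any induced $K_7$ already gives $cr>2$, so $|R_i|\ge 7$ is impossible immediately and no Euler count is needed for $\mathbb{Z}_7\times\mathbb{Z}_7$ --- the paper works throughout with $K_7$, not $K_8$, for exactly this reason, which is also why $\mathbb{Z}_5\times\mathbb{Z}_7$ and $\mathbb{Z}_7\times\mathbb{Z}_7$ (your worried borderline cases) fall instantly. Second, in the converse your block computation should note that $\Gamma_U(\mathbb{Z}_5\times\mathbb{Z}_5)$ is not orientably simple (its genus is $2$ by Lemma \ref{genusofblocks}, so $\mu(\Gamma)=2-cr(\Gamma)$), whence Lemma \ref{crosscapofblocks} gives $cr=2k-\sum\mu(\Gamma_i)=4-(1+1)=2$; quoting the orientably simple branch of the formula would instead yield $1-2+1+1=1$, which is false, so the distinction matters.
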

\begin{proof}
Let $cr(\Gamma_U(R)) = 2$ and $R$ be a non-local finite commutative ring. Then $R \cong R_1 \times R_2 \times \cdots \times R_n$, where $n \geq 2$ and each $R_i$ is a local ring.
 By Lemma \ref{crosscaplemma1}, we get $n \leq 3$. First suppose that $n = 3$, that is $R \cong R_1 \times R_2 \times R_3$. Let $|R_i| = 2$ for all $i \in \{1, 2, 3\}$. Then by Figure \ref{planardrawingof222}, we have $cr(\Gamma_U(R)) = 0$, a contradiction. Without loss of generality, we may now assume that $|R_3| \geq 3$. If $|R_3| > 3$, then by using the set 
 $X = \{(0, 0, 0), (0, 1, 0), (0, 0, c_1), (0, 0, c_2), (0, 0, c_3), (0, 1, c_1), (0, 1, c_2)\}$, where $c_1, c_2, c_3 \in R_3^{*}$,  we get $\Gamma_U(X) \cong K_7$; a contradiction. Now suppose that $|R_3| = 3$. If $|R_i| = 3$ for some $i \in \{1,2\}$, then observe that there exists $K_7$ as an induced subgraph of $\Gamma_U(R)$. It follows that $|R_1| = |R_2| = 2$. Since $|R_3| = 3$, note that $v = 10$, $e = 31$ and then by Lemma \ref{eulerformula}, we get $f = 21$; a contradiction to the Remark \ref{triangularface}.
 Thus, $R \cong R_1 \times R_2$. If either $|R_1| \geq 7$ or $|R_2| \geq 7$, then one can get $K_7$ as an induced subgraph of $\Gamma_U(R)$ and so $cr(\Gamma_U(R)) \neq 2$; a contradiction. Therefore, by Remark \ref{localringcardinality},  $|R_i| \leq 5$ for each $i = 1, 2$. Now we have the following cases:

\noindent\textbf{Case-1:} $|R_2| = 5$. Let $R_1$ be a field such that $|R_1| \in \{2, 3, 4\}$. Then by Theorem \ref{crosscap1result}, we get $cr(\Gamma_U(R)) = 1$. If $R_1$ is not a field of cardinality $4$, then by the Case-1 of Theorem \ref{crosscap1result}, we get a contradiction. Therefore, $R \cong \mathbb{Z}_5 \times \mathbb{Z}_5$.

\noindent\textbf{Case-2:} $|R_2| = 4$. Suppose that $|R_1| \in \{2, 3, 4\}$. If both $R_1$ and $R_2$ are fields, then by Theorem \ref{Planarupperideal}, the graph $\Gamma_U(R)$ is planar. Now suppose that $R_2$ is not a field. If $|R_1| \in \{2, 3 \}$, then by Theorem \ref{Planarupperideal} and Theorem \ref{crosscap1result}, we have $cr(\Gamma_U(R)) \neq 2$; a contradiction. If $|R_1| = 4$, then the set $Y = \{(0, 0), (b_1, 0), (b_2, 0), (b_3, 0), (0, z), \cdots, (b_2, z)\}$, where $z \in Z(R_2^*)$ and $b_1, b_2, b_3 \in R_1^*$, induces $K_7$ as a subgraph of $\Gamma_U(R)$. Thus, this case is not possible.

In view of the above cases and by Theorem \ref{Planarupperideal}, note that the case $|R_2| \leq 3$ is not possible. Conversely if $R \cong \mathbb{Z}_5 \times \mathbb{Z}_5$, then by Remark \ref{joinandunionoftwo graphs} and Lemma \ref{crosscapofblocks}, we obtain $cr(\Gamma_U(R)) = 2$.
\end{proof}
\section{Conclusion and Future Research}
In this manuscript, we precisely determined all the non-local finite commutative rings $R$ whose upper ideal relation graph $\Gamma_U(R)$ has genus or crosscap at most two. Moreover, all the non-local finite commutative rings $R$ have been ascertained such that $\Gamma_U(R)$ is a split graph, threshold graph and cograph, respectively. The work of this manuscript is limited to non-local finite commutative rings. To reveal some more interconnections between the ring $R$ and its upper ideal relation graph $\Gamma_U(R)$, the following problems can be explored in future.
\begin{itemize}
    \item Characterization of local rings $R$ such that $\Gamma_U(R)$ is of genus (or crosscap) at most two.

\item Classification of rings $R$ such that $\Gamma_U(R)$ is a line graph. The results obtained in Section $3$ of the manuscript will be useful for such study.

\item To determine the automorphism group of the graph $\Gamma_U(R)$, when $R$ is the ring of matrices.

\item The perfectness of the upper ideal relation graph $\Gamma_U(R)$ of the non-local finite commutative ring has been investigated in \cite{baloda2023study}. The perfectness of the upper ideal relation graph $\Gamma_U(R)$, where $R$ is a local ring, has not been investigated yet.
\end{itemize}

\vspace{.3cm}
\textbf{Acknowledgements}: We would like to thank the referees for valuable suggestions which improved the presentation of this paper.\\

\section*{Declarations}

\textbf{Funding}: The first author gratefully acknowledge for providing financial support to CSIR  (09/719(0093)/2019-EMR-I) government of India. The second author wishes to acknowledge the support of MATRICS Grant  (MTR/2018/000779) funded by SERB, India.

\vspace{.3cm}
\textbf{Conflicts of interest/Competing interests}: There is no conflict of interest regarding the publication of this paper. 

\vspace{.3cm}
\textbf{Availability of data and material (data transparency)}: Not applicable.

\vspace{.3cm}
\textbf{Code availability (software application or custom code)}: Not applicable.


\begin{thebibliography}{10}

\bibitem{afkhami2015planar}
M.~Afkhami, M.~Farrokhi D.~G., and K.~Khashyarmanesh.
\newblock Planar, toroidal, and projective commuting and noncommuting graphs.
\newblock {\em Comm. Algebra}, 43(7):2964--2970, 2015.

\bibitem{akbari2015inclusion}
S.~Akbari, M.~Habibi, A.~Majidinya, and R.~Manaviyat.
\newblock The inclusion ideal graph of rings.
\newblock {\em Comm. Algebra}, 43(6):2457--2465, 2015.

\bibitem{anderson1999zero}
D.~F. Anderson and P.~S. Livingston.
\newblock The zero-divisor graph of a commutative ring.
\newblock {\em J. Algebra}, 217(2):434--447, 1999.

\bibitem{anitha2020characterization}
T.~Anitha and R.~Rajkumar.
\newblock Characterization of groups with planar, toroidal or projective planar (proper) reduced power graphs.
\newblock {\em J. Algebra Appl.}, 19(5):2050099, 2020.

\bibitem{asir2018classification}
T.~Asir and K.~Mano.
\newblock The classification of rings with its genus of class of graphs.
\newblock {\em Turkish J. Math.}, 42(3):1424--1435, 2018.

\bibitem{asir2019classification}
T.~Asir and K.~Mano.
\newblock Classification of rings with crosscap two class of graphs.
\newblock {\em Discrete Appl. Math.}, 265:13--21, 2019.

\bibitem{asir2020classification}
T.~Asir and K.~Mano.
\newblock Classification of non-local rings with genus two zero-divisor graphs.
\newblock {\em Soft Comput}, 24(1):237--245, 2020.

\bibitem{atiyah1994introduction}
M.~Atiyah.
\newblock {\em Introduction to Commutative Algebra}.
\newblock Addison-Wesley Publishing Company, 1994.

\bibitem{baloda2023study}
B.~Baloda, P.~Mathil, and J.~Kumar.
\newblock A study of upper ideal relation graphs of rings.
\newblock {\em AKCE International Journal of Graphs and Combinatorics}, https://doi.org/10.1080/09728600.2023.2247456.

\bibitem{beck1988coloring}
I.~Beck.
\newblock Coloring of commutative rings.
\newblock {\em J. Algebra}, 116(1):208--226, 1988.

\bibitem{belshoff2007planar}
R.~Belshoff and J.~Chapman.
\newblock Planar zero-divisor graphs.
\newblock {\em J. Algebra}, 316(1):471--480, 2007.

\bibitem{biswas2022subgraph}
B.~Biswas, S.~Kar, and M.~K. Sen.
\newblock Subgraph of generalized co-maximal graph of commutative rings.
\newblock {\em Soft Comput, 26:1587–1596}, 2022.

\bibitem{chakrabarty2009intersection}
I.~Chakrabarty, S.~Ghosh, T.~K. Mukherjee, and M.~K. Sen.
\newblock Intersection graphs of ideals of rings.
\newblock {\em Discrete Math.}, 309(17):5381--5392, 2009.

\bibitem{das2015genus}
A.~K. Das and D.~Nongsiang.
\newblock On the genus of the nilpotent graphs of finite groups.
\newblock {\em Comm. Algebra}, 43(12):5282--5290, 2015.

\bibitem{MR0505860}
S.~Foldes and P.~L. Hammer.
\newblock Split graphs.
\newblock In {\em Proceedings of the {8}th {S}outh-{E}astern {C}onference on {C}ombinatorics, {G}raph {T}heory and {C}omputing}, 311--315, 1977.

\bibitem{jesili2023genus}
E.~Jesili, K.~Selvakumar, and T.~Tamizh~Chelvam.
\newblock On the genus of reduced cozero-divisor graph of commutative rings.
\newblock {\em Soft Computing}, 27(2):657--666, 2023.

\bibitem{kalaimurugan2021genus}
G.~Kalaimurugan, S.~Gopinath, and T.~Tamizh~Chelvam.
\newblock On the genus of non-zero component union graphs of vector spaces.
\newblock {\em Hacet. J. Math. Stat.}, 50(6):1595--1608, 2021.

\bibitem{khashyarmanesh2013projective}
K.~Khashyarmanesh and M.~R. Khorsandi.
\newblock Projective total graphs of commutative rings.
\newblock {\em Rocky Mountain J. Math.}, 43(4):1207--1213, 2013.

\bibitem{ma2020finite}
X.~Ma and H.~Su.
\newblock Finite groups whose noncyclic graphs have positive genus.
\newblock {\em Acta Math. Hungar.}, 162(2):618--632, 2020.

\bibitem{ma2016automorphism}
X.~Ma and D.~Wong.
\newblock Automorphism group of an ideal-relation graph over a matrix ring.
\newblock {\em Linear Multilinear Algebra}, 64(2):309--320, 2016.

\bibitem{maimani2012rings}
H.~R. Maimani, C.~Wickham, and S.~Yassemi.
\newblock Rings whose total graphs have genus at most one.
\newblock {\em Rocky Mountain J. Math.}, 42(5):1551--1560, 2012.

\bibitem{mohar2001graphs}
B.~Mohar and C.~Thomassen.
\newblock {\em Graphs on surfaces}.
\newblock Johns Hopkins University Press, 2001.

\bibitem{nowickitables}
A.~Nowicki.
\newblock Tables of finite commutative local rings of small orders.
\newblock https://www.researchgate.net/profile/Andrzej-Nowicki-2/publication/328319576, 2018.

\bibitem{pucanovic2014toroidality}
Z.~S. Pucanovi\'{c} and Z.~Z. Petrovi\'{c}.
\newblock Toroidality of intersection graphs of ideals of commutative rings.
\newblock {\em Graphs Combin.}, 30(3):707--716, 2014.

\bibitem{pucanovic2014genus}
Z.~S. Pucanovi\'{c}, M.~Radovanovi\'{c}, and A.~L. Eri\'{c}.
\newblock On the genus of the intersection graph of ideals of a commutative ring.
\newblock {\em J. Algebra Appl.}, 13(5):1350155, 2014.

\bibitem{ramanathan2021projective}
V.~Ramanathan.
\newblock On projective intersection graph of ideals of commutative rings.
\newblock {\em J. Algebra Appl.}, 20(2):2150017, 2021.

\bibitem{chelvam2013genus}
T.~Tamizh~Chelvam and T.~Asir.
\newblock On the genus of the total graph of a commutative ring.
\newblock {\em Comm. Algebra}, 41(1):142--153, 2013.

\bibitem{tamizh2020genus}
T.~Tamizh~Chelvam and K.~Prabha~Ananthi.
\newblock The genus of graphs associated with vector spaces.
\newblock {\em J. Algebra Appl.}, 19(5):2050086, 2020.

\bibitem{wang2006zero}
H.-J. Wang.
\newblock Zero-divisor graphs of genus one.
\newblock {\em J. Algebra}, 304(2):666--678, 2006.

\bibitem{westgraph}
D.~B. West et~al.
\newblock {\em Introduction to graph theory}, volume~2.
\newblock Prentice Hall Upper Saddle River, 2001.

\bibitem{white1985graphs}
A.~T. White.
\newblock {\em Graphs, groups and surfaces}.
\newblock Elsevier, 1985.

\bibitem{white2001graphs}
A.~T. White.
\newblock {\em Graphs of groups on surfaces: interactions and models}.
\newblock Elsevier, 2001.

\bibitem{wickham2008classification}
C.~Wickham.
\newblock Classification of rings with genus one zero-divisor graphs.
\newblock {\em Comm. Algebra}, 36(2):325--345, 2008.

\bibitem{ye2012co}
M.~Ye and T.~Wu.
\newblock Co-maximal ideal graphs of commutative rings.
\newblock {\em J. Algebra Appl.}, 11(6):1250114, 2012.

\end{thebibliography}
\end{document}